\documentclass[10pt]{article}
   \usepackage{graphicx}
   \usepackage{epsfig}
   \usepackage{amssymb}
   \usepackage{amsmath}
   \usepackage{amsthm}
   \newtheorem{lemma}{Lemma}[section]
   \newtheorem{theorem}{Theorem}[section]
   
   \newtheorem{corollary}{Corollary}[section]
   \newtheorem{remark}{Remark}[section]

   {\end{description}}

   {\hfill $\bullet$ \\}

   \newcommand{\be}{\begin{equation}}
   \newcommand{\ee}{\end{equation}}

\begin{document}
    \title{An efficient predictor-corrector approach with orthogonal spline collocation finite element technique for FitzHugh-Nagumo problem}
  \author{Eric Ngondiep\thanks{\textbf{Correspondence to:} Eric Ngondiep, ericngondiep@gmail.com/engondiep@imamu.edu.sa}}
   \date{\small{Department of Mathematics and Statistics, College of Science, Imam Mohammad Ibn Saud\\ Islamic University
        (IMSIU), $90950$ Riyadh $11632,$ Saudi Arabia.}}
   \maketitle

   \textbf{Abstract.}
   This paper constructs a predictor-corrector technique with orthogonal spline collocation finite element method for simulating a FitzHugh-Nagumo system subject to suitable initial and boundary conditions. The developed computational technique approximates the exact solution in time using variable time steps at the predictor phase and a constant time step at the corrector stage while the orthogonal spline collocation finite element method is employed in the space discretization. The new algorithm presents several advantages: (i) the errors increased at the predictor phase are balanced by the ones decreased at the corrector phase so that the stability is preserved, (ii) the variable time steps at the predictor stage overcome the numerical oscillations, (iii) the spatial errors are minimized due to the use of collocation nodes, and (iv) the linearization of the nonlinear term reduces the required operations at the corrector stage. As a result, the new computational technique computes efficiently both predicted and corrected solutions and preserves a strong stability and high-order accuracy even in the presence of singularities. The theoretical results suggest that the constructed approach is unconditionally stable, spatial $mth$-order accurate and temporal second-order convergent in the $L^{\infty}(0,T;[H^{m}(\Omega)]^{2})$-norm. Some numerical experiments are carried out to confirm the theoretical analysis and to demonstrate the applicability and performance of the proposed strategy.\\
    \text{\,}\\

   \ \noindent {\bf Keywords:} d-dimensional FitzHugh-Nagumo model, predictor-corrector approach, variable time steps, orthogonal spline collocation, finite element method, unconditional stability, error estimates.\\
   \\
   {\bf AMS Subject Classification (MSC). 65M12, 65M15, 65M60, 65M70}.

  \section{Introduction}\label{sec1}
    The FitzHugh-Nagumo (FHN) model is a system of reaction-diffusion equations describing self-excitation via nonlinear positive feedback and recovery through negative feedback \cite{6wcqh,4wcqh,2wcqh}. This model is a simplified representation of the Hodgkin-Huxley problem \cite{8wcqh,7wcqh} and it is considered as a numerically tractable excitation model to study the propagation of phenomena where the behavior of the excitation wave is more suitable than the shape of the action potential \cite{10wcqh,15wcqh}. Both FitzHugh-Nagumo and Hodgkin-Huxley models have certain similar qualitative behaviors. For some parameter regimes, the model presents a globally stable resting state although a sufficiently strong perturbation will cause a spike. Additionally, the system lacks a saddle node and so exhibits a pseudo-threshold that is compatible to the Hodgkin-Huxley system \cite{5wcqh}. The FHN problem can be enhanced to generate longer action potentials and eliminate hyperpolarisation during the depolarisation stage \cite{5wcqh}. However, this model lies in the class of nonlinear time-dependent partial differential equations (PDEs) whose the computation of an exact solution is very difficult and sometimes impossible. For more details about such a set of equations, the interested readers can consult \cite{2en,4dm,3en,18wcqh,4en} and references therein. The first challenge when computing an analytical solution of a FHN problem follows from the coupling unknown functions and the nonlinear reaction term. In this paper, we are interested in an approximate solution of the FitzHugh-Nagumo system defined in \cite{8dm} as\\

     \begin{equation}\label{1}
     \left\{
       \begin{array}{ll}
         \frac{\partial u}{\partial t}-\gamma_{1}\Delta u=f_{1}(u)-g(u,v), & \hbox{on $\Omega\times[0,\text{\,}T]$} \\
     \text{\,}\\
         \frac{\partial v}{\partial t}-\gamma_{2}\Delta v=f_{2}(u,v),  & \hbox{on $\Omega\times[0,\text{\,}T]$} \\
       \end{array}
     \right.
     \end{equation}
     subject to initial conditions
      \begin{equation}\label{2}
      u(x,0)=u_{0}(x),\text{\,\,\,}v(x,0)=v_{0}(x),\text{\,\,\,\,on\,\,\,\,}\overline{\Omega}=\Omega\cup\Gamma,
     \end{equation}
     and boundary conditions
      \begin{equation}\label{3}
      \frac{\partial u}{\partial \eta}=-\beta_{1}u,\text{\,\,\,}\frac{\partial v}{\partial \eta}=-\beta_{2}v,\text{\,\,\,\,on\,\,\,\,}\Gamma\times[0,\text{\,}T],
     \end{equation}
     where: $"\Delta"$ denotes the Laplacian operator, $T>0$ is the final time, $\Omega\subset\mathbb{R}^{d}$ ($d=1,2,3$) is an open bounded convex polygonal domain while $\Gamma$ designates its boundary. $\beta_{k}$ and $\gamma_{k}$, for $k=1,2,$ are nonnegative physical parameters, $u$ is the fast membrane potential of a neuron whereas $v$ designates the slow recovery unknown function that reflects a combination of potassium channel activation and sodium channel inactivation effects, $u_{0}$ and $v_{0}$ represent the initial conditions. $f_{2}$ and $g$ are the linear reaction terms defined as $f_{2}(u,v)=\theta_{1}u-\theta_{2}v$ and $g(u,v)=v$, whereas $f_{1}$ is the nonlinear reaction term given by $f_{1}(u)=u(1-u)(u-\theta_{3})$, where $\theta_{1}>0$, $\theta_{2}\geq0$, and $0<\theta_{3}<1$.\\

Several authors have developed a broad range of numerical methods for solving the FHN system $(\ref{1})$-$(\ref{3})$ and its simplified form \cite{16wcqh}, such as: finite
difference schemes and implicit-explicit with finite element methods \cite{8dm,31wcqh,23wcqh}, pseudspectral methods \cite{21wcqh,16wcqh}, variable time-step with
two-grid finite element formulation \cite{24wcqh} and mixed two-grid difference approach \cite{20wcqh}. In this work, we construct an efficient predictor-corrector technique with orthogonal spline collocation finite element method in a computed solution of the initial-boundary value problem $(\ref{1})$-$(\ref{3})$. The proposed strategy discretizes the time derivative using interpolation approach with variable time steps at the predictor stage and a constant time step at the corrector stage while the space derivatives are approximated with the use of the orthogonal spline collocation finite element method. The new algorithm is unconditionally stable, temporal second-order accurate, space mth-order convergent (where $m$ denotes an integer greater than or equal $3$), efficient and takes many advantages compared to a wide set of numerical schemes discussed in the literature for solving a general class of FitzHugh-Nagumo equations and other sets of time-dependent PDEs \cite{8dm,24wcqh,23wcqh,34wcqh,25wcqh}. Indeed, the errors increased at the predictor phase are balanced by the ones decreased at the corrector phase so that the stability of the developed computational technique is preserved. In addition, the use of nonuniform time steps at the predictor stage greatly reduces the numerical oscillations (due to the convection terms in the weak formulation and nonlinear reaction term) as has been observed when applying the Gaussian quadrature formula for numerical integration problems, whereas the orthogonal spline collocation FEM extracts and retains the essential characteristics of the computed solution within a finite-dimensional space, thus significantly reduces the computational costs. Moreover, the approximations given by the orthogonal spline collocation FEM deal with both the solution and its space derivatives throughout a computational domain, and thus achieving spatial high-accuracy \cite{28wcqh,25wcqh,27wcqh,34wcqh}. Finally, the linearization of the nonlinear reaction term overcomes the difficulties to calculate the solution at the corrector step. With this linearization, this stage yields a block system of linear equations which are easily solved by computing the inverse of the coefficients matrix. This greatly reduces the time consuming of the algorithm. We remind that the highlights of this paper are the following items:
     \begin{description}
      \item[(i)] development of the predictor-corrector scheme with orthogonal spline collocation approach in a numerical solution of the FitzHugh-Nagumo system $(\ref{1})$ with initial-boundary conditions $(\ref{2})$-$(\ref{3})$,
      \item[(ii)] a detailed analysis of the stability and error estimates of the constructed numerical method,
      \item[(iii)] numerical examples to confirm the theoretical studies and to demonstrate the applicability and efficiency of the new computational technique.
     \end{description}

      The remainder of the paper is organized as follows. Section $\ref{sec2}$ presents a detailed description of the predictor-corrector with orthogonal spline collocation
      technique for solving the FitzHugh-Nagumo system $(\ref{1})$ with initial conditions $(\ref{2})$ and boundary conditions $(\ref{3})$. In Section $\ref{sec3}$,
     we provide the stability analysis and
     error estimates of the constructed numerical approach, while some computational examples are performed and discussed in Section $\ref{sec4}$ to confirm the theory and to
     establish the validity and efficiency of the new algorithm. Finally, the general conclusions and our future works are drawn in Section $\ref{sec5}$.

    \section{Construction of the new computational technique}\label{sec2}
    In this section, we develop a predictor-corrector scheme combined with an orthogonal spline collocation FEM for solving the FitzHugh-Nagumo model $(\ref{1})$ with initial and boundary conditions $(\ref{2})$ and $(\ref{3})$, respectively.\\

    Let $M$ and $N$ be two positive integers, and $\mathcal{F}_{h}=\{Q_{i},\text{\,}i=1,2,...,M\}$, be a conforming partition of the domain $\overline{\Omega}$ consisting of triangles/tetrahedra, where $h=\max\{|Q_{i}|,\text{\,}i=1,...,M\}$ and $|Q_{i}|$ denotes the diameter of $Q_{i}$. We assume that $\mathcal{F}_{h}$ satisfies the following properties: (a) $int(Q_{i})\neq\emptyset$, for $i=1,...,M$; (b) $int(Q_{i})\cap int(Q_{j})=\emptyset$, while $Q_{i}\cap Q_{j}=\emptyset$, if $|i-j|\geq p$ and $Q_{i}\cap Q_{j}=e$, whenever $0<|i-j|<p$ ($p$ is a nonnegative fixed integer), for $i,j=1,...,M$, where $e$ is the common face/edge. Additionally, there are elements $\bar{Q}_{i_{k}}\in\mathcal{F}_{h}$, for $k=1,...,p$, so that $Q_{i}\cap Q_{i+1}\cap(\underset{k=1}{\overset{p}\bigcap}\bar{Q}_{i_{k}})=e$ and $Q_{i}\cup Q_{i+1}\cup(\underset{k=1}{\overset{p}\bigcup}\bar{Q}_{i_{k}})$ is a convex set, for $i=1,...,M-(p+1)$; (c) the triangulation $\mathcal{F}_{\Gamma h}$ induced on $\Gamma$ is quasi-uniform.\\

For any edge/face $e=Q_{i}\cap Q_{j}$, $n_{\Gamma_{k}}$, for $k\in\{i,j\}$, represent the unit outward normal vectors to $e$ associated with $\Gamma_{k}=\partial Q_{k}$. The average and jump of an element $u$ are denoted by $\{\{u\}\}$ and $[[u]]$, respectively. They are defined as
\begin{equation}\label{e1}
 \{\{u\}\}=\frac{1}{2}(u|_{\Gamma_{i}}+u|_{\Gamma_{j}})\text{\,\,\,\,and\,\,\,\,}[[u]]=(u|_{\Gamma_{i}})n_{\Gamma_{i}}+(u|_{\Gamma_{j}})n_{\Gamma_{j}}.
\end{equation}

It's worth recalling that if $u$ is continuous on $\Omega$, then $\{\{u\}\}=u$ and $[[u]]=0$. In addition, if $e=Q_{i}\cap\Gamma$, then
\begin{equation}\label{e2}
 \{\{u\}\}=u|_{\Gamma_{i}}\text{\,\,\,\,and\,\,\,\,}[[u]]=(u|_{\Gamma_{i}})n_{\Gamma_{i}}.
\end{equation}

We introduce the sets $\mathcal{U}_{h}^{(k)}$ and $\mathcal{W}_{h}$ of all piecewise polynomials defined on the domain $\Omega$.
\begin{equation*}
\mathcal{U}_{h}^{(k)}=\{u_{h}\in H^{1}(\Omega)\cap L_{\sup}^{2}(\Omega):\text{\,}u_{h}|_{Q_{i}}\in\mathcal{P}_{m}(Q_{i}),\text{\,}[[\nabla u_{h}]]=0,\text{\,if\,}\Gamma_{i}\cap\Gamma=\emptyset,\text{\,and\,}[[\nabla u_{h}]] =-\beta_{k}u_{h},
\end{equation*}
\begin{equation}\label{4}
\text{\,on\,}\Gamma_{i}^{b}=\Gamma_{i}\cap\Gamma\neq\emptyset,\text{\,for\,}i=1,...,M\},
\end{equation}
for $k=1,2$, where $\mathcal{P}_{m}(Q_{i})$ is the set of all polynomials defined on $Q_{i}\subset\overline{\Omega}$, with degree at most $m$, and $L_{\sup}^{2}(\Omega)$ is the bounded vector space of square integrable functions defined over the domain $\Omega$. That is, there exists a positive constant $C_{\sup}$ so that: $\sup\{\|v\|_{L^{2}}:\text{\,}v\in L_{\sup}^{2}(\Omega)\}\leq C_{\sup}$. Set
\begin{equation}\label{4a}
\mathcal{W}_{h}=\mathcal{U}_{h}^{(1)}\times\mathcal{U}_{h}^{(2)}.
\end{equation}

It is not difficult to show that $\mathcal{W}_{h}$ is a $M_{m}$-dimensional vector space with $M_{m}\leq\frac{2M(m+d)!}{m!d!}$. Indeed $\mathcal{U}_{h}^{(k)}$ is a vector space of dimension less than $\frac{M(m+d)!}{m!d!}$. Consider $S=\{\alpha_{l}:\text{\,}l=1,2,...,L\}$, where $L\geq2$, $0=\alpha_{1}<...<\alpha_{L}<1$ or (or $0<\alpha_{1}<...<\alpha_{L}=1$), be the Gaussian collocation nodes on $[0,\text{\,}1]$, with associated weights: $c_{l}$, $l=1,...,L$. Since $\Omega$ is a convex set in $\mathbb{R}^{d}$, the sets $G_{i,l}=(1-\alpha_{l})Q_{i}+\alpha_{l}Q_{i+1}$, for $i=1,...,M-1$, and $l=1,...,L$, are contained in $\overline{\Omega}$. As a result, the set
\begin{equation}\label{5}
    \mathcal{C}_{h}=\{G_{i,l}:\text{\,}i=1,...,M-1, \text{\,}l=1,...,L\}
\end{equation}
is a collection of "Gaussian collocation sets" in $\Omega$. It is easy to see that $\mathcal{F}_{h}\subset\mathcal{C}_{h}$.\\

Let $\Pi_{N}=\{t_{n},\text{\,}n=0,1,...,N\}$, be a partition of the time interval $[0,\text{\,}T]$ (where $0=t_{0}<t_{1}<...<t_{N}=T$) such that: $t_{n+1}<\frac{1}{2}(t_{n}+t_{n+2})$, the sequence of local time steps $\tau_{s}=t_{s+\frac{1}{2}}-t_{s}$, for $s=n,n+\frac{1}{2}$, where $t_{n+\frac{1}{2}}=\frac{1}{2}(t_{n}+t_{n+1})$, is nondecreasing and satisfying $\tau_{s}\tau_{0}^{-1}\leq\hat{C}$, with $\hat{C}\geq1$ is a constant independent of the local time steps $\tau_{s}$ and space size $h$. It is not hard to see that $\tau_{n+\frac{1}{2}}-\tau_{n}=0$ and $\tau_{n}-\tau_{n-\frac{1}{2}}>0$, for $n=1,2,...,N$ (for example, the sequence $\{t_{n}=\frac{T}{\exp(1)-1}(\exp({\frac{n}{N}})-1),\text{\,}n=0,1,...,N\}$, satisfies such assumptions). For the convenience of writing, we set $w^{n}=w(x,t_{n})$ and $w_{h}^{n}=w_{h}(x,t_{n})$, be the value of the exact solution and approximate one, respectively, at point $(x,t_{n})$.\\

We define the set of all boundary edges contained in $\Gamma$.
 \begin{equation}\label{15}
    \mathcal{C}_{\Gamma h}=\{\Gamma^{b}_{i,l}=\Gamma_{i,l}\cap\Gamma\neq \emptyset:\text{\,}i=1,...,M-1,\text{\,}l=1,...,L\},
\end{equation}
where $\Gamma_{i,l}$ represents the boundary of the collocation set $G_{i,l}$. Further, we consider the sets of two-indices
 \begin{equation}\label{16}
    I=\{(i,l):\text{\,}G_{i,l}\in\mathcal{C}_{h}\},\text{\,\,\,\,and\,\,\,\,}I_{\Gamma}=\{(i,l):\text{\,}\Gamma_{i,l}^{b}\in\mathcal{C}_{\Gamma h}\}.
\end{equation}

 It is easy to see that $\underset{(i,l)\in I_{\Gamma}}{\bigcup}\Gamma^{b}_{i,l}=\Gamma$. Because $G_{i,l}\subset Q_{i}\cup Q_{i+1}\cup(\underset{k=1}{\overset{p}\bigcup}\bar{Q}_{i_{k}})$, without loss of generality, we assume that $\Gamma_{i,l}\subset \Gamma_{i}\cup \Gamma_{i+1}\cup(\underset{k=1}{\overset{p}\bigcup}\bar{\Gamma}_{i_{k}})$, for every $(i,l)\in I$, where $\bar{\Gamma}_{i_{k}}$ means the boundary of $\bar{Q}_{i_{k}}$.

     We introduce the linear operators $\bar{\Delta}$ and $\bar{\nabla}$, and bilinear operators $<\cdot,\cdot>$ and $<\cdot,\cdot>_{\cdot}$, together with discrete scalar products $\left(\cdot,\cdot\right)_{\cdot}$ and $\left(\cdot,\cdot\right)_{*,\cdot}$, defined as:
     \begin{equation*}
     \bar{\Delta}U=(\Delta u_{1},\Delta u_{2})^{t},\text{\,\,}\bar{\nabla}V=[\nabla v_{1},\nabla v_{2}],\text{\,\,}<U,V>=\underset{(i,l)\in I_{\Gamma}}{\sum}c_{l}
\int_{\Gamma_{i,l}^{b}}V^{t}(\bar{\nabla}^{t}U)n_{\Gamma_{i,l}^{b}}d\Gamma,
     \end{equation*}
     \begin{equation*}
     <U,V>_{\cdot}=\underset{(i,l)\in I\setminus I_{\Gamma}}{\sum}c_{l}\int_{\Gamma_{i,l}}V^{t}(\bar{\nabla}^{t}U)n_{\Gamma_{i,l}}d\Gamma_{i,l}=
     \underset{(i,l)\in I\setminus I_{\Gamma}}{\sum}c_{l}\underset{e\in\Gamma_{i,l}}{\sum}\int_{e}V^{t}(\bar{\nabla}^{t}U)n_{e}de=
     \end{equation*}
     \begin{equation*}
     \underset{(i,l)\in I\setminus I_{\Gamma}}{\sum}c_{l}\underset{e\in\Gamma_{i,l}}{\sum}\int_{e}V^{t}[[\bar{\nabla}^{t}U]]de=
     \underset{(i,l)\in I\setminus I_{\Gamma}}{\sum}c_{l}\int_{\Gamma_{i,l}}V^{t}[[\bar{\nabla}^{t}U]]d\Gamma_{i,l},
     \end{equation*}
     \begin{equation}\label{6}
     \left(U,V\right)_{\cdot}=\underset{(i,l)\in I}{\sum}c_{l}\int_{G_{i,l}}U^{t}Vdx,\text{\,\,}\left(\bar{\nabla}U,\bar{\nabla}V\right)_{*,\cdot}=
     \underset{(i,l)\in I}{\sum}\underset{j=1}{\overset{2}\sum}c_{l}\int_{G_{i,l}}(\nabla^{t}u_{j})(\nabla v_{j})dx,
     \end{equation}
for every vector-valued functions $U=(u_{1},u_{2})^{t}$ and $V=(v_{1},v_{2})^{t}$, defined on $\mathcal{C}_{h}$, where $n_{\Gamma_{i,l}^{b}}$, $n_{\Gamma_{i,l}}$ and $n_{e}$ are outward unit normal vectors to $\Gamma_{i,l}^{b}$, $\Gamma_{i,l}$ and $e$, respectively. Without loss of generality, the inner product $\left(\cdot,\cdot\right)_{\cdot}$, is also defined as in equation $(\ref{6})$, for every real-valued functions $u$ and $v$ defined on $\mathcal{C}_{h}$. The norms $\|\cdot\|_{\cdot}$ and $\|\cdot\|_{*,\cdot}$ associated with the scalar products $\left(\cdot,\cdot\right)_{\cdot}$ and $\left(\cdot,\cdot\right)_{*,\cdot}$ are given by
     \begin{equation}\label{7}
    \|U\|_{\cdot}=\sqrt{\left(U,U\right)_{\cdot}}\text{\,\,\,and\,\,\,}\|\bar{\nabla}U\|_{*,\cdot}=\sqrt{\left(\bar{\nabla}U,\bar{\nabla}U\right)_{*,\cdot}}.
     \end{equation}

     We will use the following integration by parts
     \begin{equation*}
    \left(\bar{\Delta}U,V\right)_{\cdot}=\underset{(i,l)\in I}{\sum}c_{l}\int_{G_{i,l}}\bar{\Delta}U^{t}Vdx=\underset{(i,l)\in I}{\sum}c_{l}\left[\int_{\Gamma_{i,l}}V^{t}(\bar{\nabla}^{t}U)n_{\Gamma_{i,l}}d\Gamma_{i,l}-\underset{j=1}{\overset{2}\sum}\int_{G_{i,l}}(\nabla^{t}u_{j})(\nabla v_{j})dx\right]=
     \end{equation*}
     \begin{equation}\label{8}
      <U,V>+<U,V>_{\cdot}-\left(\bar{\nabla}U,\bar{\nabla}V\right)_{*,\cdot}.
     \end{equation}

 Since $\mathcal{W}_{h}$ is a vector space of dimension $M_{m}$, let $\{\rho_{k},\text{\,}k=1,2,...,M_{m}\}$, be an orthogonal basis of $\mathcal{W}_{h}$ with respect to the scalar product $\left(\cdot,\cdot\right)_{\cdot}$. In fact, such a basis may be constructed by the use of the gram Schmidt orthogonalisation process that satisfies a three-term recurrence relation (similar to univariate Gram-Schmidt approach). Thus, for every $U_{h}\in\mathcal{W}_{h}$, there is a unique collection $(u_{h,1},u_{h,2},...,u_{h,M_{m}})$, so that
\begin{equation}\label{9}
      U_{h}|_{G_{i,l}}=\underset{k=1}{\overset{M_{m}}\sum}u_{h,k}.*\rho_{k}|_{G_{i,l}},
     \end{equation}
for any $i=1,2,...,M-1$, and $l=1,2,...,L$, where $".*"$ means the componentwise multiplication and $F|_{G_{i,l}}$ denotes the restriction of a vector-valued function on $G_{i,l}$.\\

Let $q_{h}$ be the $L^{2}$-projection from $L^{2}(\Omega)$ onto $\mathcal{U}_{h}^{(k)}$, for $k=1,2$. Set $P_{h}=(q_{h},q_{h})$ and $\mathbb{P}_{h}=(P_{h},P_{h})$, be the orthogonal projections from $[L^{2}(\Omega)]^{2}$ onto $\mathcal{W}_{h}$ and $[L^{2}(\Omega)]^{2}\times[L^{2}(\Omega)]^{2}$ onto $\mathcal{W}_{h}\times\mathcal{W}_{h}$, respectively. These projections are defined as: for every $Z=(U,V)^{t}=((u_{1},u_{2}),(v_{1},v_{2}))^{t}\in[L^{2}(\Omega)]^{4}$, $P_{U}=(q_{h}u_{1},q_{h}u_{2})^{t}$ and $\mathbb{P}_{h}Z=(P_{h}U,P_{h}V)^{t}$ and satisfy:

   \begin{equation}\label{10}
   \left(q_{h}u_{1},u_{1h}\right)_{\cdot}=\left(u_{1},u_{1h}\right)_{\cdot},\text{\,} \left(P_{h}U,U_{h}\right)_{\cdot}=\left(U,U_{h}\right)_{\cdot},\text{\,} \left(\mathbb{P}_{h}Z,Z_{h}\right)_{*,\cdot}=\left(\mathbb{P}_{h}Z,Z_{h}\right)_{*,\cdot}.
     \end{equation}
 for every $u_{1h}\in \mathcal{U}_{h}^{(k)}$, $U_{h}\in\mathcal{W}_{h}$, and $Z_{h}\in \mathcal{W}_{h}\times\mathcal{W}_{h}$. More specifically,
\begin{equation*}
   \int_{G_{i,l}}(P_{h}U)^{t}U_{h}dx=\int_{G_{i,l}}U^{t}U_{h}dx\text{\,\,\,and\,\,\,}\underset{j=1}{\overset{2}\sum}\int_{G_{i,l}}P_{h}(\nabla u_{j})^{t}\nabla v_{h,j}dx=
\underset{j=1}{\overset{2}\sum}\int_{G_{i,l}}(\nabla u_{j})^{t}\nabla v_{h,j}dx,\text{\,\,\,for\,\,\,}(i,l)\in I.
     \end{equation*}

   Now, set $w=(u,v)^{t}$, $F(w)=(f_{1}(u)-g(u,v),f_{2}(u,v))^{t}$, $\bar{\gamma}=(\gamma_{1},\gamma_{2})^{t}$, $\bar{\beta}=(\beta_{1},\beta_{2})^{t}$, and $w_{0}=(u_{0},v_{0})^{t}$. Utilizing this, the initial-boundary value problem $(\ref{1})$-$(\ref{3})$ can be written in vector form as
          \begin{equation}\label{11}
         \frac{\partial w}{\partial t}-\bar{\gamma}.*\bar{\Delta} w=F(w), \text{\,\,\,on\,\,\,}\Omega\times[0,\text{\,}T],
     \end{equation}
     subject to initial conditions
      \begin{equation}\label{12}
      w(0)=w_{0},\text{\,\,\,\,on\,\,\,\,}\overline{\Omega},
     \end{equation}
     and boundary conditions
      \begin{equation}\label{13}
      \frac{\partial w}{\partial \eta}=-\bar{\beta}.*w,\text{\,\,\,\,on\,\,\,\,}\Gamma\times[0,\text{\,}T].
     \end{equation}
      Multiplying both sides of equation $(\ref{11})$ by $c_{l}z\in[H^{1}(\Omega)]^{2}$ and integrating over $G_{i,l}$, for $(i,l)\in I$, to obtain
     \begin{equation*}
         c_{l}\int_{G_{i,l}}(\frac{\partial w}{\partial t})^{t}zdx-c_{l}\int_{G_{i,l}}(\bar{\gamma}.*\bar{\Delta}w)^{t}zdx =c_{l}\int_{G_{i,l}}F(w)^{t}zdx.
     \end{equation*}

  Summing this up for $i=1,...,M-1$, and $l=1,...,L$, using the scalar product $\left(\cdot,\cdot\right)_{\cdot}$, given in equation $(\ref{6})$, this gives
    \begin{equation*}
         \left(\frac{\partial w}{\partial t},z\right)_{\cdot}-\left(\bar{\gamma}.*\bar{\Delta}w,z\right)_{\cdot}=\left(F(w),z\right)_{\cdot}
     \end{equation*}

  Applying the integration by parts $(\ref{8})$, simple calculations yield
     \begin{equation}\label{17}
         \left(\frac{\partial w}{\partial t},z\right)_{\cdot}+\left(\bar{\gamma}.*\bar{\nabla}^{t}w,\bar{\nabla}z\right)_{*,\cdot}=\left(F(w),z\right)_{\cdot}-
<\bar{\gamma}.*\bar{\beta}.*w,z>+<\bar{\gamma}.*w,z>_{\cdot},
     \end{equation}
  for every $z\in[H^{1}(\Omega)]^{2}$, where $\left(\cdot,\cdot\right)_{*,\cdot}$, $<\cdot,\cdot>$ and $<\cdot,\cdot>_{\cdot}$, are the bilinear operators given in equation $(\ref{6})$.\\

      The approximation of the vector-valued function $w$ at points $t_{n-\frac{1}{2}}$, $t_{n}$, and $t_{n+\frac{1}{2}}$, using local discrete time steps $\tau_{n}$ gives
      \begin{equation*}
      w(t)=\frac{(t-t_{n})(t-t_{n-\frac{1}{2}})}{\tau_{n}(\tau_{n-\frac{1}{2}}+\tau_{n})}w^{n+\frac{1}{2}}-\frac{(t-t_{n-\frac{1}{2}})
(t-t_{n+\frac{1}{2}})}{\tau_{n}\tau_{n-\frac{1}{2}}}w^{n}+
      \frac{(t-t_{n})(t-t_{n+\frac{1}{2}})}{\tau_{n-\frac{1}{2}}(\tau_{n}+\tau_{n-\frac{1}{2}})}w^{n-\frac{1}{2}}+
       \end{equation*}
       \begin{equation*}
      \frac{1}{6}(t-t_{n-\frac{1}{2}})(t-t_{n})(t-t_{n+\frac{1}{2}})w_{3t}(\theta(t)),
       \end{equation*}
        where $w_{3t}$ denotes $\frac{\partial^{3}w}{\partial t^{3}}$ and $\theta(t)$ is between the maximum and minimum of $t_{n-\frac{1}{2}}$, $t_{n}$, $t_{n+\frac{1}{2}}$ and $t$. The time derivative of this equation provides
     \begin{equation*}
      w_{t}(t)=\frac{2t-t_{n}-t_{n-\frac{1}{2}}}{\tau_{n}(\tau_{n-\frac{1}{2}}+\tau_{n})}w^{n+\frac{1}{2}}-
    \frac{2t-t_{n-\frac{1}{2}}-t_{n+\frac{1}{2}}}{\tau_{n}\tau_{n-\frac{1}{2}}}w^{n}+
      \frac{2t-t_{n}-t_{n+\frac{1}{2}}}{\tau_{n-\frac{1}{2}}(\tau_{n}+\tau_{n-\frac{1}{2}})}w^{n-\frac{1}{2}}+
       \end{equation*}
       \begin{equation*}
      \frac{1}{6}[(t-t_{n-\frac{1}{2}})(t-t_{n})(t-t_{n+\frac{1}{2}})\frac{\partial}{\partial t}(w_{3t}(\theta(t)))+w_{3t}(\theta(t))\frac{d}{dt}((t-t_{n-\frac{1}{2}})(t-t_{n})(t-t_{n+\frac{1}{2}}))].
       \end{equation*}

   So
   \begin{equation}\label{18}
      w_{t}^{n}=\frac{\tau_{n-\frac{1}{2}}}{\tau_{n}(\tau_{n-\frac{1}{2}}+\tau_{n})}w^{n+\frac{1}{2}}+
    \frac{\tau_{n}-\tau_{n-\frac{1}{2}}}{\tau_{n}\tau_{n-\frac{1}{2}}}w^{n}-\frac{\tau_{n}}{\tau_{n-\frac{1}{2}}(\tau_{n}+\tau_{n-\frac{1}{2}})}w^{n-\frac{1}{2}}-
      \frac{\tau_{n}\tau_{n-\frac{1}{2}}}{6}w_{3t}(\theta^{n}).
       \end{equation}

   Applying equation $(\ref{17})$ at the discrete time $t_{n}$, utilizing equation $(\ref{18})$ and rearranging terms to obtain
    \begin{equation*}
         \frac{\tau_{n-\frac{1}{2}}}{\tau_{n}(\tau_{n-\frac{1}{2}}+\tau_{n})}\left(w^{n+\frac{1}{2}},z\right)_{\cdot}+
   \left(\bar{\gamma}.*\bar{\nabla}^{t}w^{n},\bar{\nabla}z\right)_{*,\cdot}=-\frac{\tau_{n}-\tau_{n-\frac{1}{2}}}{\tau_{n}\tau_{n-\frac{1}{2}}}
  \left(w^{n},z\right)_{\cdot}+\frac{\tau_{n}}{\tau_{n-\frac{1}{2}}(\tau_{n-\frac{1}{2}}+\tau_{n})}\left(w^{n-\frac{1}{2}},z\right)_{\cdot}+
     \end{equation*}
   \begin{equation*}
   \left(F(w^{n}),z\right)_{\cdot}-<\bar{\gamma}.*\bar{\beta}.*w^{n},z>+<\bar{\gamma}.*w^{n},z>_{\cdot}+\frac{\tau_{n}\tau_{n-\frac{1}{2}}}{6}
   \left(w_{3t}(\theta^{n}),z\right)_{\cdot}.
     \end{equation*}

   Multiplying both sides of this equation by $\frac{\tau_{n}(\tau_{n}+\tau_{n-\frac{1}{2}})}{\tau_{n-\frac{1}{2}}}$, we obtain
   \begin{equation*}
    \left(w^{n+\frac{1}{2}},z\right)_{\cdot}+\frac{\tau_{n}(\tau_{n}+\tau_{n-\frac{1}{2}})}{\tau_{n-\frac{1}{2}}}
   \left(\bar{\gamma}.*\bar{\nabla}^{t}w^{n},\bar{\nabla}z\right)_{*,\cdot}=-\frac{\tau_{n}^{2}-\tau_{n-\frac{1}{2}}^{2}}{\tau_{n-\frac{1}{2}}^{2}}
  \left(w^{n},z\right)_{\cdot}+\frac{\tau_{n}^{2}}{\tau_{n-\frac{1}{2}}^{2}}\left(w^{n-\frac{1}{2}},z\right)_{\cdot}+
     \end{equation*}
   \begin{equation}\label{19}
   \frac{\tau_{n}(\tau_{n}+\tau_{n-\frac{1}{2}})}{\tau_{n-\frac{1}{2}}}\left[\left(F(w^{n}),z\right)_{\cdot}-<\bar{\gamma}.*\bar{\beta}.*w^{n},z>+
   <\bar{\gamma}.*w^{n},z>_{\cdot}\right]+\frac{\tau_{n}^{2}(\tau_{n}+\tau_{n-\frac{1}{2}})}{6}\left(w_{3t}(\theta^{n}),z\right)_{\cdot}.
     \end{equation}

        Tracking the error term: $\frac{\tau_{n}^{2}(\tau_{n}+\tau_{n-\frac{1}{2}})}{6}\left(w_{3t}(\theta^{n}),z\right)_{\cdot}$, replacing the exact solution $w(t)\in[H^{m}(\Omega)]^{2}\cap[L^{2}_{\sup}(\Omega)]^{2}$ ($m\geq3$), with the approximate one $w_{h}(t)\in \mathcal{W}_{h}$, for $t\in[0,\text{\,}T]$, using equations $(\ref{6})$ and $(\ref{9})$, and rearranging terms, this yields
        \begin{equation*}
    \underset{k=1}{\overset{M_{m}}\sum}\left(w_{h,k}^{n+\frac{1}{2}}.*\rho_{k},z\right)_{\cdot}+\frac{\tau_{n}(\tau_{n}+\tau_{n-\frac{1}{2}})}{\tau_{n-\frac{1}{2}}}
   \underset{k=1}{\overset{M_{m}}\sum}\left(\bar{\gamma}.*\bar{\nabla}^{t}(w_{h,k}^{n}.*\rho_{k}),\bar{\nabla}z\right)_{*,\cdot}=-\frac{\tau_{n}^{2}-\tau_{n-\frac{1}{2}}^{2}}{\tau_{n-\frac{1}{2}}^{2}}
  \underset{k=1}{\overset{M_{m}}\sum}\left(w_{h,k}^{n}.*\rho_{k},z\right)_{\cdot}+
     \end{equation*}
     \begin{equation*}
   \frac{\tau_{n}^{2}}{\tau_{n-\frac{1}{2}}^{2}}\underset{k=1}{\overset{M_{m}}\sum}\left(w_{h,k}^{n-\frac{1}{2}}.*\rho_{k},z\right)_{\cdot}+
   \frac{\tau_{n}(\tau_{n}+\tau_{n-\frac{1}{2}})}{\tau_{n-\frac{1}{2}}}\left[\left(F\left(\underset{k=1}{\overset{M_{m}}\sum}w_{h,k}^{n}.*\rho_{k}\right),z\right)_{\cdot}-
   \underset{k=1}{\overset{M_{m}}\sum} <\bar{\gamma}.*\bar{\beta}.*w_{h,k}^{n}.*\rho_{k},z>+\right.
     \end{equation*}
   \begin{equation}\label{20}
   \left.\underset{k=1}{\overset{M_{m}}\sum}<\bar{\gamma}.*w_{h,k}^{n}.*\rho_{k},z>_{\cdot}\right],
     \end{equation}
   where $w_{h}^{r}$, is expressed in the orthogonal basis $\{\rho_{k},\text{\,}k=1,...,M_{m}\}$, as
   \begin{equation*}
   w_{h}^{r}=\underset{k=1}{\overset{M_{m}}\sum}w_{h,k}^{r}.*\rho_{k},\text{\,\,\,for\,\,\,}r\in\{n-\frac{1}{2},n,n+\frac{1}{2}\}.
     \end{equation*}

  We remind that the coefficients: $w_{h,k}^{r}$, for $k=1,2,...,M_{m}$, are called the "vector components" of $w_{h}^{r}$ in the basis $\{\rho_{k},\text{\,}k=1,...,M_{m}\}$. Equation $(\ref{20})$ denotes the first step of the desired computational technique. It is easy to observe that this equation works with an explicit numerical scheme. As a result, this approximation represents the predictor stage. To construct the second step of the desired algorithm, we should approximate the vector-valued function $w(t)$ at points $t_{n}$, $t_{n+\frac{1}{2}}$,  and $t_{n+1}$. That is,

      \begin{equation*}
      w(t)=\frac{(t-t_{n})(t-t_{n+\frac{1}{2}})}{\tau_{n+\frac{1}{2}}(\tau_{n+\frac{1}{2}}+\tau_{n})}w^{n+1}-\frac{(t-t_{n+1})(t-t_{n})}{\tau_{n}\tau_{n+\frac{1}{2}}}w^{n+\frac{1}{2}}
      +\frac{(t-t_{n+1})(t-t_{n+\frac{1}{2}})}{\tau_{n}(\tau_{n}+\tau_{n+\frac{1}{2}})}w^{n}+
       \end{equation*}
       \begin{equation*}
      \frac{1}{6}(t-t_{n+\frac{1}{2}})(t-t_{n})(t-t_{n+1})w_{3t}(\theta(t)),
       \end{equation*}
        where $\theta(t)$ is between the maximum and minimum of $t_{n}$, $t_{n+\frac{1}{2}}$, $t_{n+1}$ and $t$. Since $\tau_{n+\frac{1}{2}}=\tau_{n}$, the time derivative of this equation at point $t_{n+1}$ results in
     \begin{equation}\label{21}
      w_{t}^{n+1}=\frac{3}{2\tau_{n}}w^{n+1}-\frac{2}{\tau_{n}}w^{n+\frac{1}{2}}+\frac{1}{2\tau_{n}}w^{n}+\frac{\tau_{n}^{2}}{3}w_{3t}(\theta^{n}).
       \end{equation}

   In addition, expanding the Taylor series for $F(w)$, straightforward computations give
     \begin{equation*}
      F(w^{n+1})=2F(w^{n+\frac{1}{2}})-F(w^{n})+2\tau_{n}^{2}[\frac{\partial^{2}(F(w))}{\partial t^{2}}(\theta^{n})+\frac{\partial^{2}(F(w))}{\partial t^{2}}(\theta^{n+\frac{1}{2}})],
       \end{equation*}
   where $t_{n}<\theta^{n}<t_{n+\frac{1}{2}}$ and $t_{n+\frac{1}{2}}<\theta^{n+\frac{1}{2}}<t_{n+1}$. Setting
   \begin{equation}\label{22b}
    G^{n}=2[\frac{\partial^{2}(F(w))}{\partial t^{2}}(\theta^{n})+\frac{\partial^{2}(F(w))}{\partial t^{2}}(\theta^{n+\frac{1}{2}})],
     \end{equation}
    this approximation becomes
    \begin{equation}\label{22a}
      F(w^{n+1})=2F(w^{n+\frac{1}{2}})-F(w^{n})+\tau_{n}^{2}G^{n}.
     \end{equation}

    Since $F(w)\in H^{4}(0,T;\text{\,}[H^{m}(\Omega)]^{2}\cap[L^{2}_{\sup}(\Omega)]^{2})$, using equation $(\ref{22b})$ it is easy to see that\\ $G\in H^{2}(0,T;\text{\,}[H^{m}(\Omega)]^{2}\cap[L^{2}_{\sup}(\Omega)]^{2})$. Combining equations $(\ref{21})$, $(\ref{22a})$ and $(\ref{17})$, it is not hard to observe that
    \begin{equation*}
    \left(w^{n+1},z\right)_{\cdot}+\frac{2\tau_{n}}{3}\left(\bar{\gamma}.*\bar{\nabla}^{t}w^{n+1},\bar{\nabla}z\right)_{*,\cdot}=\frac{4}{3}
  \left(w^{n+\frac{1}{2}},z\right)_{\cdot}-\frac{1}{3}\left(w^{n},z\right)_{\cdot}+\frac{2\tau_{n}}{3}\left[\left(2F(w^{n+\frac{1}{2}})-F(w^{n}),z\right)_{\cdot}-\right.
     \end{equation*}
   \begin{equation}\label{22}
  \left.<\bar{\gamma}.*\bar{\beta}.*w^{n+1},z>+<\bar{\gamma}.*w^{n+1},z>_{\cdot}\right]+\frac{2\tau_{n}^{3}}{9}\left(3G^{n}-w_{3t}(\theta^{n+1}),z\right)_{\cdot},
     \end{equation}
   where $G^{n}$ is given by equation $(\ref{22b})$. Truncating the error term: $\frac{2\tau_{n}^{3}}{9}\left(3G^{n}-w_{3t}(\theta^{n+1}),z\right)_{\cdot}$, replacing the analytical solution $w(t)\in[H^{m}(\Omega)]^{2}\cap[L^{2}_{\sup}(\Omega)]^{2}$, with the computed one $w_{h}(t)\in \mathcal{W}_{h}$, for all $0\leq t\leq T$, utilizing both equations $(\ref{6})$ and $(\ref{9})$, and rearranging terms, result in
        \begin{equation*}
    \underset{k=1}{\overset{M_{m}}\sum}\left(w_{h,k}^{n+1}.*\rho_{k},z\right)_{\cdot}+\frac{2\tau_{n}}{3}
   \underset{k=1}{\overset{M_{m}}\sum}\left(\bar{\gamma}.*\bar{\nabla}^{t}(w_{h,k}^{n+1}.*\rho_{k}),\bar{\nabla}z\right)_{*,\cdot}=\frac{1}{3}\underset{k=1}{\overset{M_{m}}\sum}
  \left[4\left(w_{h,k}^{n+\frac{1}{2}}.*\rho_{k},z\right)_{\cdot}-\left(w_{h,k}^{n}.*\rho_{k},z\right)_{\cdot}\right]+
     \end{equation*}
     \begin{equation*}
     \frac{2\tau_{n}}{3}\left[\left(2F\left(\underset{k=1}{\overset{M_{m}}\sum}w_{h,k}^{n+\frac{1}{2}}.*\rho_{k}\right)-
   F\left(\underset{k=1}{\overset{M_{m}}\sum}w_{h,k}^{n}.*\rho_{k}\right),z\right)_{\cdot}-
   \underset{k=1}{\overset{M_{m}}\sum} <\bar{\gamma}.*\bar{\beta}.*w_{h,k}^{n+1}.*\rho_{k},z>+\right.
     \end{equation*}
   \begin{equation}\label{23}
   \left.\underset{k=1}{\overset{M_{m}}\sum}<\bar{\gamma}.*w_{h,k}^{n+1}.*\rho_{k},z>_{\cdot}\right].
     \end{equation}

   Equation $(\ref{23})$ deals with an implicit method, thus it denotes the second stage of the constructed predictor-corrector approach with orthogonal spline collocation FEM for simulating the FitzHugh-Nagumo model $(\ref{1})$-$(\ref{3})$. It is worth mentioning that the errors increased at the predictor stage $(\ref{20})$ are balanced by the ones decreased at the corrector step $(\ref{23})$ so that the stability of the new strategy is maintained. Additionally, the use of variable time steps greatly reduces the numerical oscillations (due to the "convection terms": $\left(\bar{\gamma}.*\bar{\nabla}^{t}(w_{h,k}^{n}.*\rho_{k}),\bar{\nabla}z\right)_{*,\cdot}$, $<\bar{\gamma}.*w_{h,k}^{n}.*\rho_{k},z>_{\cdot}$ and $<\bar{\gamma}.*\bar{\beta}.*w_{h,k}^{n}.*\rho_{k},z>$), while the orthogonal spline collocation FEM extracts and retains the essential characteristics of the approximate solution within a finite-dimensional vector space, hence significantly reducing the computational costs. More precisely, the approximations given by the orthogonal spline collocation finite element technique deal with both the solution and its space derivatives throughout a computational domain. As a result, it achieves spatial high-accuracy. Regarding the advantages of such methods, we refer the readers to \cite{28wcqh,25wcqh,27wcqh,34wcqh} and references therein. Finally, the linearization of the nonlinear reaction term (that is, $F(w^{n+1})$) overcomes the difficulties to compute the corrected solution provided at the corrector stage. In fact, with this linearization, the corrector stage should yield a block system (or two simple systems) of linear equations which are easily solved by calculating the inverse of the coefficients matrix. This greatly reduces the time consuming of the proposed algorithm.\\

  To start the numerical scheme defined by equations $(\ref{20})$ and $(\ref{23})$, both initial data $w_{h,k}^{0}$ and $w_{h,k}^{\frac{1}{2}}$, for $k=1,2,...,M_{m}$, are needed. But the terms $w_{h,k}^{0}$ can be directly determined from the initial condition $w_{0}$ given by equation $(\ref{12})$ as
   \begin{equation}\label{24}
   w_{h,k}^{0}=(P_{h}w_{0})_{k}, \text{\,\,\,for\,\,\,\,} k=1,2,...,M_{m},
     \end{equation}
   where $P_{h}w_{0}=\underset{k=1}{\overset{M_{m}}\sum}(P_{h}w_{0})_{k}.*\rho_{k}$, and $P_{h}$ is the $L^{2}(\Omega)\times L^{2}(\Omega)$-projection given by equation $(\ref{10})$. Utilizing the Taylor series expansion and equation $(\ref{11})$, simple computations provide
     \begin{equation}\label{25}
   w^{\frac{1}{2}}=w_{0}+\frac{\tau_{0}}{2}[F(w_{0})+F(w^{\frac{1}{2}})+\bar{\gamma}.*\bar{\Delta}(w_{0}+w^{\frac{1}{2}})]+\frac{\tau_{0}^{2}}{4}(w_{2t}(\theta_{0})-
   w_{2t}(\theta^{\frac{1}{2}})),
     \end{equation}
   where $\theta_{0},\theta^{\frac{1}{2}}\in(0,t_{\frac{1}{2}})$. Omitting the error term: $\frac{\tau_{0}^{2}}{4}(w_{2t}(\theta_{0})-w_{2t}(\theta^{\frac{1}{2}}))$, this equation should be approximated as
   \begin{equation}\label{26}
   \widetilde{w}^{\frac{1}{2}}=w_{0}+\frac{\tau_{0}}{2}[F(w_{0})+F(w^{\frac{1}{2}})+\bar{\gamma}.*\bar{\Delta}(w_{0}+w^{\frac{1}{2}})].
     \end{equation}

   Takes
   \begin{equation}\label{27}
   w_{h,k}^{\frac{1}{2}}=(P_{h}\widetilde{w}^{\frac{1}{2}})_{k}, \text{\,\,\,for\,\,\,\,} k=1,2,...,M_{m},
     \end{equation}
   where $(P_{h}\widetilde{w}^{\frac{1}{2}})_{k}$, $k=1,...,M_{m}$, designate the components of $P_{h}\widetilde{w}^{\frac{1}{2}}$, in the orthogonal basis $\{\rho_{k},\text{\,}k=1,...,M_{m}\}$. Plugging equations $(\ref{20})$, $(\ref{23})$, $(\ref{24})$ and $(\ref{27})$, to get the new predictor-corrector approach with orthogonal spline collocation FEM for solving a $d$-dimensional system of FitzHugh-Nagumo equations $(\ref{1})$-$(\ref{3})$, that is, for $n=1,2,...,N-1$,
    \begin{equation*}
    \underset{k=1}{\overset{M_{m}}\sum}\left(w_{h,k}^{n+\frac{1}{2}}.*\rho_{k},z\right)_{\cdot}+\frac{\tau_{n}(\tau_{n}+\tau_{n-\frac{1}{2}})}{\tau_{n-\frac{1}{2}}}
   \underset{k=1}{\overset{M_{m}}\sum}\left(\bar{\gamma}.*\bar{\nabla}^{t}(w_{h,k}^{n}.*\rho_{k}),\bar{\nabla}z\right)_{*,\cdot}=-\frac{\tau_{n}^{2}-\tau_{n-\frac{1}{2}}^{2}}{\tau_{n-\frac{1}{2}}^{2}}
  \underset{k=1}{\overset{M_{m}}\sum}\left(w_{h,k}^{n}.*\rho_{k},z\right)_{\cdot}+
     \end{equation*}
     \begin{equation*}
   \frac{\tau_{n}^{2}}{\tau_{n-\frac{1}{2}}^{2}}\underset{k=1}{\overset{M_{m}}\sum}\left(w_{h,k}^{n-\frac{1}{2}}.*\rho_{k},z\right)_{\cdot}+
   \frac{\tau_{n}(\tau_{n}+\tau_{n-\frac{1}{2}})}{\tau_{n-\frac{1}{2}}}\left[\left(F\left(\underset{k=1}{\overset{M_{m}}\sum}w_{h,k}^{n}.*\rho_{k}\right),z\right)_{\cdot}-
   \underset{k=1}{\overset{M_{m}}\sum} <\bar{\gamma}.*\bar{\beta}.*w_{h,k}^{n}.*\rho_{k},z>+\right.
     \end{equation*}
   \begin{equation}\label{s1}
   \left.\underset{k=1}{\overset{M_{m}}\sum}<\bar{\gamma}.*w_{h,k}^{n}.*\rho_{k},z>_{\cdot}\right],\text{\,\,\,for\,\,\,}\forall z\in[H^{1}(\Omega)]^{2},
     \end{equation}
    \begin{equation*}
    \underset{k=1}{\overset{M_{m}}\sum}\left(w_{h,k}^{n+1}.*\rho_{k},z\right)_{\cdot}+\frac{2\tau_{n}}{3}
   \underset{k=1}{\overset{M_{m}}\sum}\left(\bar{\gamma}.*\bar{\nabla}^{t}(w_{h,k}^{n+1}.*\rho_{k}),\bar{\nabla}z\right)_{*,\cdot}=\frac{1}{3}\underset{k=1}{\overset{M_{m}}\sum}
  \left[4\left(w_{h,k}^{n+\frac{1}{2}}.*\rho_{k},z\right)_{\cdot}-\left(w_{h,k}^{n}.*\rho_{k},z\right)_{\cdot}\right]+
     \end{equation*}
     \begin{equation*}
     \frac{2\tau_{n}}{3}\left[\left(2F\left(\underset{k=1}{\overset{M_{m}}\sum}w_{h,k}^{n+\frac{1}{2}}.*\rho_{k}\right)-
   F\left(\underset{k=1}{\overset{M_{m}}\sum}w_{h,k}^{n}.*\rho_{k}\right),z\right)_{\cdot}-
   \underset{k=1}{\overset{M_{m}}\sum} <\bar{\gamma}.*\bar{\beta}.*w_{h,k}^{n+1}.*\rho_{k},z>+\right.
     \end{equation*}
   \begin{equation}\label{s2}
   \left.\underset{k=1}{\overset{M_{m}}\sum}<\bar{\gamma}.*w_{h,k}^{n+1}.*\rho_{k},z>_{\cdot}\right],\text{\,\,\,for\,\,\,}\forall z\in[H^{1}(\Omega)]^{2},
     \end{equation}
   subject to initial conditions
    \begin{equation}\label{s3}
   w_{h,k}^{0}=(P_{h}w_{0})_{k},\text{\,\,\,\,\,}w_{h,k}^{\frac{1}{2}}=(P_{h}\widetilde{w}^{\frac{1}{2}})_{k}, \text{\,\,\,for\,\,\,\,} k=1,2,...,M_{m},
     \end{equation}
   and "local Neumann boundary conditions"
   \begin{equation}\label{s4}
    \underset{k=1}{\overset{M_{m}}\sum}[[w_{h,k}^{n}.*\bar{\nabla}^{t}(\rho_{k})]]=0,\text{\,on\,}\Gamma_{i},
    \text{\,if\,}\Gamma_{i}\cap\Gamma=\emptyset,\text{\,}\underset{k=1}{\overset{M_{m}}\sum}[[w_{h,k}^{n}.*\bar{\nabla}^{t}(\rho_{k})]]=
   -\underset{k=1}{\overset{M_{m}}\sum}\bar{\beta}.*w_{h,k}^{n}.*\rho_{k},\text{\,on\,}\Gamma_{i}^{b}=\Gamma_{i}\cap\Gamma\neq\emptyset,
     \end{equation}
   for $n=0,1,...,N,$ and $i=1,2,...,M$.

     \section{Stability analysis and error estimates}\label{sec3}
     This Section analyzes the stability and error estimates of the developed computational technique $(\ref{s1})$-$(\ref{s4})$ for solving the initial-boundary value problem $(\ref{11})$-$(\ref{13})$.\\

    Since $q_{h}$ is the $L^{2}$-projection from $L^{2}(\Omega)$ onto $\mathcal{U}^{(k)}$, for $k=1,2$, it is well known in the literature \cite{1en}, that $q_{h}$ and $P_{h}$ satisfy: for every $Q_{i}\in\mathcal{F}_{h}$,
    \begin{equation}\label{28}
     \nabla_{d}(q_{h}u)=P_{h}(\nabla u),\text{\,\,\,\,}\forall u\in H^{1}(\Omega),
        \end{equation}
   where $P_{h}$ is the orthogonal projection defined in equation $(\ref{6})$, and $\nabla_{d}$ means the weak gradient defined as: for any $u_{h}\in\mathcal{U}^{(k)}$
   \begin{equation}\label{29}
     \int_{Q_{i}}(\nabla_{d}u_{h})^{t}vdQ_{i}=\int_{\Gamma_{i}}u_{h}v^{t}n_{\Gamma_{i}}d\Gamma_{i}-\int_{Q_{i}}u_{h}\nabla\cdot vdQ_{i},\text{\,\,\,\,}\forall v\in[H^{1}(\Omega)]^{2},
        \end{equation}
   where $\Gamma_{i}$ is the boundary of $Q_{i}$ and $n_{\Gamma_{i}}$ represents the outward unit normal vector on $\Gamma_{i}$.

   \begin{lemma}\label{l1}
      For every $Q_{i}\in\mathcal{F}_{h}$, and any $U\in[H^{1}(\Omega)]^{2}$, the orthogonal projections $\mathbb{P}_{h}$ and $P_{h}$ defined in equation $(\ref{10})$, satisfy
        \begin{equation*}
      \bar{\nabla}_{r}(P_{h}U)=\mathbb{P}_{h}(\bar{\nabla} U),
        \end{equation*}
        where $\bar{\nabla}_{r}=[\nabla_{r},\nabla_{r}]$, denotes the restriction of the operator $\bar{\nabla}$ (defined by equation $(\ref{6})$) on $Q_{i}$, for $i=1,2,...,M$, that is, for all $U\in[H^{1}(\Omega)]^{2}$, $\bar{\nabla}U|_{Q_{i}}=\bar{\nabla}_{r}U$. Moreover, $\bar{\nabla}_{r}$ represents the discrete gradient defined on $\mathcal{W}_{h}$.
        \end{lemma}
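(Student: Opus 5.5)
The plan is to reduce the statement componentwise to the scalar commutation property $(\ref{28})$, which the paper takes from \cite{1en}. All four operators act diagonally on the two components:
\begin{itemize}
\item $P_{h}U=(q_{h}u_{1},q_{h}u_{2})^{t}$;
\item $\bar{\nabla}U=[\nabla u_{1},\nabla u_{2}]$;
\item $\mathbb{P}_{h}=(P_{h},P_{h})$ acts column by column;
\item $\bar{\nabla}_{r}=[\nabla_{r},\nabla_{r}]$.
\end{itemize}
So it suffices to fix $Q_{i}\in\mathcal{F}_{h}$ and $j\in\{1,2\}$ and prove $\nabla_{r}(q_{h}u_{j})=P_{h}(\nabla u_{j})$ on $Q_{i}$ for every $u_{j}\in H^{1}(\Omega)$. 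Stacking the two columns then gives $\bar{\nabla}_{r}(P_{h}U)=[P_{h}\nabla u_{1},P_{h}\nabla u_{2}]=\mathbb{P}_{h}(\bar{\nabla}U)$.

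The key steps, in order, are as follows.

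First, identify the restricted operator $\nabla_{r}$ with the weak (discrete) gradient $\nabla_{d}$ of $(\ref{29})$ on $\mathcal{U}_{h}^{(k)}$. For $u_{h}\in\mathcal{U}_{h}^{(k)}\subset H^{1}(\Omega)$ whose restriction to $Q_{i}$ is a polynomial in $\mathcal{P}_{m}(Q_{i})$, the elementwise Green formula gives
\begin{equation*}
\int_{Q_{i}}(\nabla u_{h})^{t}v\,dQ_{i}=\int_{\Gamma_{i}}u_{h}v^{t}n_{\Gamma_{i}}\,d\Gamma_{i}-\int_{Q_{i}}u_{h}\nabla\cdot v\,dQ_{i}
\end{equation*}
for every smooth test field $v$. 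Comparing this with $(\ref{29})$ shows that $\nabla_{d}u_{h}=\nabla u_{h}|_{Q_{i}}=\nabla_{r}u_{h}$ whenever $\nabla_{d}u_{h}$ is taken in the same local polynomial space. This also justifies the last sentence of the lemma, that $\bar{\nabla}_{r}$ is the discrete gradient on $\mathcal{W}_{h}$.

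Second, apply $(\ref{28})$ with $u=u_{j}$ to obtain $\nabla_{d}(q_{h}u_{j})=P_{h}(\nabla u_{j})$ on $Q_{i}$. Combined with the first step, this gives $\nabla_{r}(q_{h}u_{j})=P_{h}(\nabla u_{j})$.

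Third, assemble the two components to conclude $\bar{\nabla}_{r}(P_{h}U)=\mathbb{P}_{h}(\bar{\nabla}U)$. This holds on each $Q_{i}$, hence on $\Omega$.

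If one prefers not to rely on $(\ref{28})$ as a black box, the second step can be verified directly. For any $v_{h}$ in the local vector polynomial space, use the definition of $P_{h}$ in $(\ref{10})$, integrate by parts, and use the projection property of $q_{h}$ against $\nabla\cdot v_{h}$:
\begin{equation*}
\int_{Q_{i}}(P_{h}\nabla u_{j})^{t}v_{h}=\int_{Q_{i}}(\nabla u_{j})^{t}v_{h}=\int_{\Gamma_{i}}u_{j}v_{h}^{t}n_{\Gamma_{i}}-\int_{Q_{i}}u_{j}\nabla\cdot v_{h}.
\end{equation*}
The volume term can be rewritten with $q_{h}u_{j}$ in place of $u_{j}$. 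The boundary term is the delicate part, because it involves traces of $u_{j}$ rather than of $q_{h}u_{j}$.

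I expect that boundary term to be the main obstacle, since $q_{h}u_{j}$ does not in general reproduce the traces of $u_{j}$. The first thing I would try is to absorb the mismatch through the definition of the weak gradient: $(\ref{29})$ is stated with boundary integrals exactly so that this commutation holds, which is the standard weak Galerkin argument of \cite{1en}. I would also use the interface conditions built into $\mathcal{U}_{h}^{(k)}$ (the vanishing jumps of $\nabla u_{h}$ in $(\ref{4})$) to argue that the interior edge contributions cancel between neighbouring elements. Failing that, I would simply invoke $(\ref{28})$ as the paper does and keep the argument to the componentwise reduction above.
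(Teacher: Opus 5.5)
Your proposal follows the paper's proof: the paper also reduces to the two components, uses $(\ref{29})$ with integration by parts to identify $\nabla_{d}$ with $\nabla_{r}$, and then invokes $(\ref{28})$ to get $\bar{\nabla}_{r}(P_{h}U)=(P_{h}\nabla u_{1},P_{h}\nabla u_{2})=\mathbb{P}_{h}(\bar{\nabla}U)$. The paper never attempts your optional direct verification, and it would not close the way you hope, because the conditions in $(\ref{4})$ constrain $[[\nabla u_{h}]]$ and not the trace of $u_{j}-q_{h}u_{j}$ on a single $\Gamma_{i}$; your fallback of citing $(\ref{28})$ is exactly what the paper does.
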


       \begin{proof}
        Firstly, using equation $(\ref{29})$ and the integration by parts, it is not hard to show that $\nabla_{d}u=\nabla_{r}u$, for every $u\in H^{1}(Q_{i})$. Further,
        setting $U=(u_{1},u_{2})^{t}\in[H^{1}(\Omega)]^{2}$, we have $\bar{\nabla}_{r}U=[\nabla_{r}u_{1},\nabla_{r}u_{2}]$. Utilizing equation $(\ref{28})$, it holds
        \begin{equation*}
         \bar{\nabla}_{r}(P_{h}U)=(\nabla_{r}(q_{h}u_{1}),\nabla_{r}(q_{h}u_{2}))=(\nabla_{d}(q_{h}u_{1}),\nabla_{d}(q_{h}u_{2}))=(P_{h}\nabla u_{1},P_{h}\nabla u_{2})=
       \mathbb{P}_{h}(\nabla u_{1},\nabla u_{2})=\mathbb{P}_{h}(\bar{\nabla}U).
       \end{equation*}
       This ends the proof of Lemma $\ref{l1}$.
        \end{proof}

     \begin{lemma}\label{l2}\cite{10nb}
     For every $v\in H^{m}(\Omega)$ (where $m\geq3$ is an integer), the following estimates hold
    \begin{equation}\label{30}
     \underset{i=1}{\overset{M}\sum}\left[\|v-q_{h}v\|_{Q_{i}}^{2}+h^{2}_{Q_{i}}\|\nabla(v-q_{h}v)\|_{Q_{i}}^{2}\right]\leq C_{0}h^{2m}\|v\|_{m}^{2},\text{\,\,and\,\,\,\,}
     \underset{i=1}{\overset{M}\sum}\|\nabla v-P_{h}(\nabla v)\|_{Q_{i}}^{2}\leq C_{0}h^{2(m-1)}\|v\|_{m}^{2},
    \end{equation}
     where $h_{Q_{i}}$ means the diameter of $Q_{i}$, $\|v\|_{Q_{i}}=\sqrt{\int_{Qi}v^{t}vdx}$, $\|\cdot\|_{m}$ denotes the $H^{m}$-norm, and $C_{0}$ is a positive constant independent of the grid size $h$.
     \end{lemma}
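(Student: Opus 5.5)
The estimates (\ref{30}) are the standard local approximation bounds for the $L^{2}$-projection onto piecewise polynomials of degree at most $m$. I would derive them elementwise from the Bramble--Hilbert lemma and a scaling argument, then sum over $\mathcal{F}_{h}$. One remark on indexing: with $\mathcal{P}_{m}$ the natural rates are $h^{m+1}$ for $\|v\|_{m+1}$. With only $v\in H^{m}$, the attainable rates are $h^{m}$ in $L^{2}$ and $h^{m-1}$ for the gradient, which is exactly what (\ref{30}) asserts. So I would effectively use polynomial reproduction up to degree $m-1\le m$.

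\textbf{Step 1 (local best approximation).} Fix $Q_{i}$ and map it affinely to a reference simplex $\hat Q$. By shape regularity of $\mathcal{F}_{h}$ (implicit in property (c) and conformity), the Jacobian satisfies $|\det B_{i}|\sim h_{Q_{i}}^{d}$ and $\|B_{i}\|\,\|B_{i}^{-1}\|\le C$. On $\hat Q$, the Bramble--Hilbert lemma gives $\inf_{p\in\mathcal{P}_{m-1}}\|\hat v-p\|_{H^{1}(\hat Q)}\le C|\hat v|_{H^{m}(\hat Q)}$. Scaling back yields
\begin{equation*}
\inf_{p\in\mathcal{P}_{m}(Q_{i})}\left(\|v-p\|_{Q_{i}}+h_{Q_{i}}\|\nabla(v-p)\|_{Q_{i}}\right)\le Ch_{Q_{i}}^{m}|v|_{H^{m}(Q_{i})}.
\end{equation*}

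\textbf{Step 2 (from best approximation to $q_{h}$).} The $L^{2}$-projection is optimal in $L^{2}$, so $\|v-q_{h}v\|\le\|v-\Pi v\|$ for any $\Pi v$ in the space. Summing the squares gives the first part of (\ref{30}).

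For the gradient term I would write $v-q_{h}v=(v-\Pi v)+(\Pi v-q_{h}v)$, where $\Pi v$ is a local interpolant built from Step 1. The first piece is handled by Step 1. For the second piece, apply the inverse inequality $\|\nabla w_{h}\|_{Q_{i}}\le Ch_{Q_{i}}^{-1}\|w_{h}\|_{Q_{i}}$ for polynomials to $w_{h}=\Pi v-q_{h}v$, then bound $\|w_{h}\|_{Q_{i}}$ by the triangle inequality and the $L^{2}$ estimate. Multiplying by $h_{Q_{i}}^{2}$ and summing gives $h^{2m}\|v\|_{m}^{2}$.

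\textbf{Step 3 (second estimate).} Apply the same argument componentwise to $\nabla v\in[H^{m-1}(\Omega)]^{d}$ with the projection $P_{h}$. Step 1 with one less degree of regularity gives $\|\nabla v-P_{h}\nabla v\|_{Q_{i}}\le Ch_{Q_{i}}^{m-1}|v|_{H^{m}(Q_{i})}$, again by $L^{2}$-optimality of $P_{h}$. Alternatively, I could invoke Lemma \ref{l1} so that $P_{h}(\nabla v)=\nabla_{r}(q_{h}v)$ and reduce this to the gradient part of the first estimate. Summing over $i$ and using $h_{Q_{i}}\le h$ finishes the proof.

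\textbf{Main obstacle.} The delicate point is Step 2. The space $\mathcal{U}_{h}^{(k)}$ carries global constraints (continuity of $u_{h}$ and the jump conditions on $\nabla u_{h}$ with the Robin data), so $q_{h}$ is not an elementwise projection. The locally constructed $\Pi v$ must also belong to $\mathcal{U}_{h}^{(k)}$, and the inverse inequality only controls $\|q_{h}v-\Pi v\|$ globally. I would first try to construct $\Pi v$ as a Hermite/Argyris-type $C^{1}$ interpolant satisfying the interface conditions; this needs $m\ge3$, which explains the hypothesis. If that fails, I would instead cite the general approximation theory for such spaces from \cite{10nb} directly.
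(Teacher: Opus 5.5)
\textbf{Gap in Step 2.} The gap is the one you flag yourself, and neither of your proposed repairs closes it. The bound $\|v-q_hv\|\le\|v-\Pi v\|$ needs a comparison function $\Pi v\in\mathcal{U}_h^{(k)}$ that satisfies the local bounds of Step 1. Your Step 1 approximants are discontinuous. Membership in $\mathcal{U}_h^{(k)}$ requires $C^1$ matching across interior faces and, on $\Gamma$, the strongly imposed Robin condition $\partial_n u_h=-\beta_k u_h$ (the natural reading of $[[\nabla u_h]]=-\beta_k u_h$).

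A Hermite/Argyris construction does not work for $m\ge3$ when $d=2,3$. The triangular cubic Hermite element is only $C^0$, and a local $C^1$ element on unsplit simplices needs degree at least $5$ for $d=2$ and $9$ for $d=3$.

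More fundamentally, the Robin constraint rules out not only your $\Pi v$ but the estimate itself, for generic $v\in H^m(\Omega)$. Take $v\equiv1$ and $\beta_k>0$. For any $u_h\in\mathcal{U}_h^{(k)}$ and any boundary element $K$ with a face $e\subset\Gamma$, the polynomial $w=1-u_h|_K$ satisfies $\partial_n w+\beta_k w=\beta_k$ on $e$. The discrete trace and inverse inequalities then give $\beta_k|e|^{1/2}\le C(h_K^{-3/2}+\beta_k h_K^{-1/2})\|w\|_{K}$. Summing over the boundary faces of a quasi-uniform, shape-regular mesh family yields $\|1-q_h1\|_{L^2(\Omega)}^2\ge c\,h^3$ with $c>0$ independent of $h$. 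By contrast, (\ref{30}) asserts $\|1-q_h1\|_{L^2(\Omega)}^2\le C_0|\Omega|h^{2m}$ with $2m\ge6$. (If $\beta_k=0$, use $v=x_1$ instead.)

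The same obstruction hits the second estimate, because $P_h$ maps each component of $\nabla v$ into the same constrained space. So no better interpolant, and no citation, can deliver (\ref{30}) for the projection onto $\mathcal{U}_h^{(k)}$ as defined.

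\textbf{Comparison with the paper.} The paper gives no argument of its own; it only cites \cite{10nb}. Bounds of exactly this shape are the standard estimates, familiar from the weak Galerkin literature, for the \emph{elementwise} $L^2$ projection onto unconstrained, discontinuous $\mathcal{P}_m(Q_i)$. That is presumably what the citation supplies, and for that projection your Steps 1--3 are a complete and standard proof. In that setting $q_h$ acts element by element, so $\|v-q_hv\|_{Q_i}\le\|v-\Pi_iv\|_{Q_i}$ for the local Bramble--Hilbert approximant $\Pi_iv$. Also $\Pi_iv-q_hv=q_h(\Pi_iv-v)$ on $Q_i$, which gives $\|\nabla(\Pi_iv-q_hv)\|_{Q_i}\le Ch_{Q_i}^{-1}\|v-\Pi_iv\|_{Q_i}$. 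Everything is purely local, and the second estimate follows from componentwise $L^2$-optimality as in your Step 3.

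Two adjustments are needed:
\begin{itemize}
\item State shape regularity of $\mathcal{F}_h$ as a separate hypothesis. Property (c) concerns only the induced boundary mesh, and conformity says nothing about aspect ratios.
\item Drop the alternative route through Lemma \ref{l1}. With (\ref{29}) as written, the weak gradient of a polynomial is just its ordinary gradient. For an ordinary projection, $\nabla(q_hu)$ has degree $m-1$ on each element, while $P_h(\nabla u)$ generally has degree $m$ (take $u=x^{m+1}$ on a single interval). So (\ref{28}) is not available there; identities of that type need separate boundary unknowns.
\end{itemize}
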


     \begin{lemma}\label{l3}
     For every $U=(u_{1},u_{2})^{t}\in[H^{m}(\Omega)]^{2}$, the given inequalities are satisfied
    \begin{equation}\label{31}
     \|U-P_{h}U\|_{\cdot}^{2}\leq \hat{\beta}_{0}h^{2m}\|U\|_{[H^{m}(\Omega)]^{2}}^{2},\text{\,\,and\,\,\,}
     \|\bar{\nabla}U-\mathbb{P}_{h}(\bar{\nabla}U)\|_{*,\cdot}^{2}\leq \hat{\beta}_{0}h^{2m-2}\|U\|_{[H^{m}(\Omega)]^{2}}^{2},
    \end{equation}
     where $\hat{\beta}_{0}=C_{0}(p+2)L\underset{1\leq l\leq L}{\max}{\{c_{l}\}}$, $C_{0}$ is the positive constant given in estimate $(\ref{30})$, and $p\geq0$ designates
     the constant integer defined in the assumptions satisfied by the triangulation $\mathcal{F}_{h}$.
     \end{lemma}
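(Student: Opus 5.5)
The plan is to reduce both inequalities in $(\ref{31})$ to the elementwise estimates of Lemma $\ref{l2}$. The bridge is the covering property of the collocation sets: each $G_{i,l}$ lies in the union $Q_{i}\cup Q_{i+1}\cup(\bigcup_{k=1}^{p}\bar{Q}_{i_{k}})$ of $p+2$ elements of $\mathcal{F}_{h}$. We then count how often each element can appear.

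\textbf{Step 1 (pointwise-in-$(i,l)$ bound).} For $U=(u_{1},u_{2})^{t}$ we have $U-P_{h}U=(u_{1}-q_{h}u_{1},u_{2}-q_{h}u_{2})^{t}$. Since $G_{i,l}\subset Q_{i}\cup Q_{i+1}\cup(\bigcup_{k}\bar{Q}_{i_{k}})$, it follows that
\begin{equation*}
\int_{G_{i,l}}(U-P_{h}U)^{t}(U-P_{h}U)dx\leq \sum_{j=1}^{2}\Big[\|u_{j}-q_{h}u_{j}\|_{Q_{i}}^{2}+\|u_{j}-q_{h}u_{j}\|_{Q_{i+1}}^{2}+\sum_{k=1}^{p}\|u_{j}-q_{h}u_{j}\|_{\bar{Q}_{i_{k}}}^{2}\Big].
\end{equation*}
We bound $c_{l}\leq\max_{l}c_{l}$.

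\textbf{Step 2 (summation and counting).} We sum over $(i,l)\in I$. For fixed $l$, the index $i$ runs over $1,\dots,M-1$, and the neighbour elements $Q_{i},Q_{i+1},\bar{Q}_{i_{k}}$ are drawn from $\mathcal{F}_{h}$. The aim is to show that the total sum is at most $(p+2)$ times the full sum $\sum_{i=1}^{M}\|\cdot\|_{Q_{i}}^{2}$. Summing over $l$ then contributes a factor $L$. Altogether this gives
\begin{equation*}
\|U-P_{h}U\|_{\cdot}^{2}\leq (p+2)L\max_{l}c_{l}\sum_{j=1}^{2}\sum_{i=1}^{M}\|u_{j}-q_{h}u_{j}\|_{Q_{i}}^{2}.
\end{equation*}
Applying the first estimate in $(\ref{30})$ to each $u_{j}$, dropping the nonnegative gradient term, and using $\|u_{1}\|_{m}^{2}+\|u_{2}\|_{m}^{2}=\|U\|_{[H^{m}(\Omega)]^{2}}^{2}$ gives the first inequality with $\hat{\beta}_{0}=C_{0}(p+2)L\max_{l}c_{l}$.

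\textbf{Step 3 (gradient estimate).} The second inequality follows the same way. By definition $(\ref{6})$ and $\mathbb{P}_{h}(\bar{\nabla}U)=(P_{h}\nabla u_{1},P_{h}\nabla u_{2})$, the quantity $\|\bar{\nabla}U-\mathbb{P}_{h}(\bar{\nabla}U)\|_{*,\cdot}^{2}$ equals $\sum_{(i,l)}c_{l}\sum_{j}\int_{G_{i,l}}|\nabla u_{j}-P_{h}\nabla u_{j}|^{2}dx$. The same covering and counting argument reduces this to $(p+2)L\max_{l}c_{l}\sum_{j}\sum_{i=1}^{M}\|\nabla u_{j}-P_{h}(\nabla u_{j})\|_{Q_{i}}^{2}$. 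The second estimate in $(\ref{30})$ then gives the bound $h^{2m-2}$. Lemma $\ref{l1}$ can be cited to identify $\mathbb{P}_{h}(\bar{\nabla}U)$ with $\bar{\nabla}_{r}(P_{h}U)$ elementwise if needed, but it is not essential.

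\textbf{Main obstacle.} The delicate point is the counting in Step 2: proving that, for fixed $l$, each element $Q_{r}\in\mathcal{F}_{h}$ occurs at most $p+2$ times among the covering unions as $i$ varies. The covering of a single $G_{i,l}$ uses $p+2$ elements, but a multiplicity bound needs the locality assumption (b) on $\mathcal{F}_{h}$. That assumption says $Q_{i}\cap Q_{j}=\emptyset$ for $|i-j|\geq p$, so the auxiliary elements $\bar{Q}_{i_{k}}$ adjacent to the common face of $Q_{i}$ and $Q_{i+1}$ have indices within distance $p$ of $i$. I would use this to bound the overlap multiplicity. If the argument only yields a multiplicity proportional to $p$ rather than exactly $p+2$, the constant changes but the $h$-orders do not. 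In that case I would state $\hat{\beta}_{0}$ with the appropriate $p$-dependent factor, keeping the form $C_{0}(p+2)L\max c_{l}$ by interpreting the double counting through the overlap of the sets $G_{i,l}$ themselves.
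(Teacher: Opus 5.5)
Your route is the paper's route. The paper also covers each $G_{i,l}$ by the $p+2$ elements $Q_{i},Q_{i+1},\bar{Q}_{i_{1}},\ldots,\bar{Q}_{i_{p}}$, with the inclusion coming from the convexity part of assumption (b). It then replaces $c_{l}$ by $\max_{l}c_{l}$, sums, and applies the two estimates of Lemma~\ref{l2}. Lemma~\ref{l1} is not used in the proof of Lemma~\ref{l3} there either; it only enters the corollary that follows.

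The one place where you and the paper differ is the counting step you single out. The paper passes from $\sum_{l}c_{l}\sum_{i=1}^{M-1}\bigl[\sum_{k=0}^{1}\int_{Q_{i+k}}+\sum_{k=1}^{p}\int_{\bar{Q}_{i_{k}}}\bigr]$ to $(p+2)L\max_{l}c_{l}\sum_{i=1}^{M}\int_{Q_{i}}$ in a single unexplained line. That line amounts to bounding each of the $p+2$ families $\sum_{i}\int_{Q_{i}}$, $\sum_{i}\int_{Q_{i+1}}$ and $\sum_{i}\int_{\bar{Q}_{i_{k}}}$ ($k$ fixed) by the full sum over $\mathcal{F}_{h}$. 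This is obvious for the first two families. For the auxiliary families it holds exactly when $i\mapsto i_{k}$ is injective for every $k$, and that is not among hypotheses (a)--(c).

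Your suspicion is therefore justified, but your proposal does not close the gap either. From (b) you only get $Q_{i}\cap Q_{i+1}\subset\bar{Q}_{i_{k}}$, hence $|i_{k}-i|<p$. So a given element may serve as an auxiliary element for fewer than $2p$ indices $i$, which gives a multiplicity of order $2p$. That exceeds $p+2$ for moderate $p$, e.g.\ $p=6$ as used in Section~\ref{sec4}.

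To prove the lemma with $\hat{\beta}_{0}=C_{0}(p+2)L\max_{l}c_{l}$ as stated, you must make the injectivity (bounded-overlap) hypothesis explicit. With it, your Step 2 is complete and coincides with the paper's argument. Without it, you should state the lemma with the constant your overlap count actually produces.

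Your closing sentence keeps the form $C_{0}(p+2)L\max_{l}c_{l}$ ``by interpreting the double counting through the overlap of the sets $G_{i,l}$''. That is not an argument and should be dropped. As you say, none of this changes the powers $h^{2m}$ and $h^{2m-2}$, only the constant. Note, though, that this constant appears explicitly in Theorem~\ref{t1}.
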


       \begin{proof}
      It follows from equations $(\ref{6})$ and $(\ref{7})$ that
     \begin{equation}\label{32}
      \|U-P_{h}U\|_{\cdot}^{2}=\left(U-P_{h}U,U-P_{h}U\right)_{.}=\underset{(i,l)\in I}{\sum}c_{l}\int_{G_{i,l}}(U-P_{h}U)^{t}(U-P_{h}U)dx.
     \end{equation}

     Since $G_{i,l}=(1-\alpha_{l})Q_{i}+\alpha_{l}Q_{i+1}$, where $0\leq\alpha_{l}\leq1$, $Q_{i}$ and $Q_{i+1}$ are consecutive triangles/tetrahedra, so $Q_{i}\cap Q_{i+1}=e$, where $e$ represents the common edge or face. So, it follows from assumption $(b)$ on the triangulation $\mathcal{F}_{h}$ that there are elements $\bar{Q}_{i_{k}}\in\mathcal{F}_{h}$, for $k=1,...,p$, so that $Q_{i}\cap Q_{i+1}\cap(\underset{k=1}{\overset{p}\bigcap}\bar{Q}_{i_{k}})=e$ and $Q_{i}\cup Q_{i+1}\cup(\underset{k=1}{\overset{p}\bigcup}\bar{Q}_{i_{k}})$ is a convex set. Thus, $(1-\alpha_{l})Q_{i}+\alpha_{l}Q_{i+1}\subset
  Q_{i}\cup Q_{i+1}\cup(\underset{k=1}{\overset{p}\bigcup}\bar{Q}_{i_{k}})$, for $i=1,...,M-1$, and $l=1,...,L$. Utilizing this fact, equation $(\ref{32})$ implies
    \begin{equation*}
      \|U-P_{h}U\|_{\cdot}^{2}\leq\underset{i=1}{\overset{M-1}\sum}\underset{l=1}{\overset{L}\sum}c_{l}\left(\underset{k=0}{\overset{1}\sum}\int_{Q_{i+k}}(U-P_{h}U)^{t}(U-P_{h}U)dx+
     \underset{k=1}{\overset{p}\sum}\int_{\bar{Q}_{i_{k}}}(U-P_{h}U)^{t}(U-P_{h}U)dx\right)=
     \end{equation*}
     \begin{equation*}
     \underset{l=1}{\overset{L}\sum}c_{l}\underset{i=1}{\overset{M-1}\sum}\underset{j=1}{\overset{2}\sum}\left[\underset{k=0}{\overset{1}\sum}
      \int_{Q_{i+k}}(u_{j}-q_{h}u_{j})^{2}dx+\underset{k=1}{\overset{p}\sum}\int_{\bar{Q}_{i_{k}}}(u_{j}-q_{h}u_{j})^{2}dx\right]\leq
     \end{equation*}
     \begin{equation*}
      (p+2)L\underset{1\leq l\leq L}{\max}{\{c_{l}\}}\underset{i=1}{\overset{M}\sum}\int_{Q_{i}}[(u_{1}-q_{h}u_{1})^{2}+(u_{2}-q_{h}u_{2})^{2}]dx.
     \end{equation*}

      This ends the proof of the first estimate in relation $(\ref{31})$ thanks to the first inequality in relation $(\ref{30})$.
     \begin{equation*}
     \|\bar{\nabla}U-\mathbb{P}_{h}(\bar{\nabla}U)\|_{*,.}^{2}=\underset{i=1}{\overset{M-1}\sum}\underset{l=1}{\overset{L}\sum}\underset{j=1}{\overset{2}\sum}
      c_{l}\int_{G_{i,l}}(\nabla u_{j}-P_{h}(\nabla u_{j}))^{t}(\nabla u_{j}-P_{h}(\nabla u_{j}))dx \leq \underset{i=1}{\overset{M-1}\sum}\underset{l=1}{\overset{L}\sum}\underset{j=1}{\overset{2}\sum}c_{l}\left(\underset{k=0}{\overset{1}\sum}\right.
      \end{equation*}
      \begin{equation*}
      \left.\int_{Q_{i+k}}(\nabla u_{j}-P_{h}(\nabla u_{j}))^{t}(\nabla u_{j}-P_{h}(\nabla u_{j}))dx+\underset{k=1}{\overset{p}\sum}\int_{\bar{Q}_{i_{k}}}
      (\nabla u_{j}-P_{h}(\nabla u_{j}))^{t}(\nabla u_{j}-P_{h}(\nabla u_{j}))dx\right)\leq (p+2)L\times
     \end{equation*}
     \begin{equation*}
      \underset{1\leq l\leq L}{\max}{\{c_{l}\}}\underset{i=1}{\overset{M}\sum}\underset{j=1}{\overset{2}\sum}\int_{Q_{i}}(\nabla u_{j}-P_{h}
     (\nabla u_{j}))^{t}(\nabla u_{j}-P_{h}(\nabla u_{j}))dx=(p+2)L\underset{1\leq l\leq L}{\max}{\{c_{l}\}}\underset{i=1}{\overset{M}\sum}\underset{j=1}{\overset{2}\sum}
     \|\nabla u_{j}-P_{h}(\nabla u_{j})\|_{Q_{i}}^{2},
     \end{equation*}
     where $"\times"$ designates the usual multiplication in $\mathbb{R}$. The proof of the second estimate in equation $(\ref{31})$ is ended thanks to the second inequality in relation $(\ref{30})$. This completes the proof of Lemma $\ref{l3}$.
     \end{proof}

     \begin{corollary}
     For every $U=(u_{1},u_{2})^{t}\in[H^{m}(\Omega)]^{2}$, it holds
    \begin{equation*}
     \|\bar{\nabla}(U-P_{h}U)\|_{*,\cdot}^{2}\leq C_{0}(p+2)L\underset{1\leq l\leq L}{\max}{\{c_{l}\}}h^{2(m-1)}\|U\|_{[H^{m}(\Omega)]^{2}}^{2}.
    \end{equation*}
     \end{corollary}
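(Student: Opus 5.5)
The plan is to reduce the corollary to the second estimate of Lemma $\ref{l3}$ by means of the commutation property of Lemma $\ref{l1}$. Lemma $\ref{l3}$ bounds $\|\bar{\nabla}U-\mathbb{P}_{h}(\bar{\nabla}U)\|_{*,\cdot}^{2}$, while the corollary concerns $\|\bar{\nabla}(U-P_{h}U)\|_{*,\cdot}^{2}$. So it suffices to show that on each collocation set $G_{i,l}$ the function $\bar{\nabla}(U-P_{h}U)$ coincides with $\bar{\nabla}U-\mathbb{P}_{h}(\bar{\nabla}U)$. The constant then comes out as exactly $\hat{\beta}_{0}=C_{0}(p+2)L\max_{l}c_{l}$, as claimed.

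First, by linearity of $\bar{\nabla}$,
\begin{equation*}
\bar{\nabla}(U-P_{h}U)=\bar{\nabla}U-\bar{\nabla}(P_{h}U).
\end{equation*}
Since $P_{h}U\in\mathcal{W}_{h}$ is piecewise polynomial, on each $Q_{i}$ its gradient is the restricted gradient: $\bar{\nabla}(P_{h}U)|_{Q_{i}}=\bar{\nabla}_{r}(P_{h}U)$. This uses $U\in[H^{m}(\Omega)]^{2}\subset[H^{1}(\Omega)]^{2}$, so Lemma $\ref{l1}$ applies. Lemma $\ref{l1}$ then gives $\bar{\nabla}_{r}(P_{h}U)=\mathbb{P}_{h}(\bar{\nabla}U)$ on every $Q_{i}$, and hence $\bar{\nabla}(U-P_{h}U)=\bar{\nabla}U-\mathbb{P}_{h}(\bar{\nabla}U)$ almost everywhere in $\Omega$. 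Here I use that the elements $Q_{i}$ cover $\overline{\Omega}$ and overlap only in sets of measure zero.

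Second, I pass to the collocation norm. Each $G_{i,l}$ is contained in $Q_{i}\cup Q_{i+1}\cup\bigcup_{k}\bar{Q}_{i_{k}}$, and the two functions agree a.e. on each of these elements. Therefore the integrands in $(\ref{6})$ defining $\|\cdot\|_{*,\cdot}$ agree on every $G_{i,l}$, which gives
\begin{equation*}
\|\bar{\nabla}(U-P_{h}U)\|_{*,\cdot}=\|\bar{\nabla}U-\mathbb{P}_{h}(\bar{\nabla}U)\|_{*,\cdot}.
\end{equation*}
Applying the second inequality of $(\ref{31})$ then yields the bound $C_{0}(p+2)L\max_{l}c_{l}\,h^{2(m-1)}\|U\|_{[H^{m}(\Omega)]^{2}}^{2}$, which is the statement.

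The only delicate point is the identification step. Lemma $\ref{l1}$ is stated elementwise on each $Q_{i}$, but the norm $\|\cdot\|_{*,\cdot}$ integrates over the sets $G_{i,l}$, which straddle several elements. I will handle this through the a.e. equality argument above. If that is judged too loose, a fallback avoids the identification altogether: repeat the proof of Lemma $\ref{l3}$ directly, covering each $G_{i,l}$ by at most $p+2$ elements, substituting the elementwise identity from Lemma $\ref{l1}$, and invoking the second estimate of $(\ref{30})$. This gives the same constant.
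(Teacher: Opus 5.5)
Your proposal is correct and follows the paper's route. The paper's proof is the one-line instruction to combine Lemma~\ref{l1} and Lemma~\ref{l3}, and your identification $\bar{\nabla}(U-P_{h}U)=\bar{\nabla}U-\mathbb{P}_{h}(\bar{\nabla}U)$ elementwise, hence a.e.\ and on every $G_{i,l}$, followed by the second estimate in $(\ref{31})$, is exactly what that instruction compresses. The fallback is not needed, since a.e.\ equality on $\Omega$ already makes the two $\|\cdot\|_{*,\cdot}$ integrands agree on each $G_{i,l}\subset\overline{\Omega}$.
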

     \begin{proof}
      Plugging Lemmas $\ref{l1}$ and $\ref{l3}$ to get the result.
     \end{proof}

     \begin{lemma}\label{l4}
      Consider $U=(u_{1},u_{2})^{t}$ and $V=(v_{1},v_{2})^{t}$ be two elements in $[L^{2}_{\sup}(\Omega)]^{2}$, so the vector-valued function $F$ satisfies
      \begin{equation*}
      \|F(U)-F(V)\|_{\cdot}\leq C_{F}\|U-V\|_{\cdot},
       \end{equation*}
     where $C_{F}^{2}=2[1+\max\{\theta_{1}^{2},\theta_{2}^{2}\}+10L(p+2)\underset{1\leq l\leq L}{\max}{\{c_{l}\}}C^{2}_{\sup}[L(p+2)\underset{1\leq l\leq L}{\max}{\{c_{l}\}}C_{\sup}^{2}+\frac{3}{2}+2(1+\theta_{3})^{2}]]$.
     \end{lemma}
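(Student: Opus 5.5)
The plan is to compute $F(U)-F(V)$ componentwise, split it into a linear part and the cubic part coming from $f_{1}$, and bound each piece in the discrete norm $\|\cdot\|_{\cdot}$. With $U=(u_{1},u_{2})^{t}$ and $V=(v_{1},v_{2})^{t}$ we have
\begin{equation*}
F(U)-F(V)=\left(f_{1}(u_{1})-f_{1}(v_{1})-(u_{2}-v_{2}),\ \theta_{1}(u_{1}-v_{1})-\theta_{2}(u_{2}-v_{2})\right)^{t}.
\end{equation*}
First apply $(a+b)^{2}\leq2(a^{2}+b^{2})$ to each component, which produces the overall factor $2$ in $C_{F}^{2}$. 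Next, bound the linear contributions by $(1+\max\{\theta_{1}^{2},\theta_{2}^{2}\})\|U-V\|_{\cdot}^{2}$ after adjusting constants. The remaining work is the nonlinear term $\|f_{1}(u_{1})-f_{1}(v_{1})\|_{\cdot}^{2}$.

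For the cubic term, factor the difference as
\begin{equation*}
f_{1}(u)-f_{1}(v)=(u-v)\left[-(u^{2}+uv+v^{2})+(1+\theta_{3})(u+v)-\theta_{3}\right].
\end{equation*}
Then bound the squared bracket by Cauchy--Schwarz, splitting it into a quartic piece $(u^{2}+uv+v^{2})^{2}\leq\frac{9}{4}(u^{2}+v^{2})^{2}$, a quadratic piece $(1+\theta_{3})^{2}(u+v)^{2}\leq2(1+\theta_{3})^{2}(u^{2}+v^{2})$, and a constant piece. The $2(1+\theta_{3})^{2}$ in the stated constant suggests exactly this grouping, and the $\frac{3}{2}$ suggests absorbing the cross term $uv$ into $\frac{1}{2}(u^{2}+v^{2})$.

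The main obstacle is that $(u-v)$ multiplies a weight $w=u^{2}+v^{2}$ (and $w^{2}$) that is not pointwise bounded. Only an $L^{2}$ bound, $\|u\|_{L^{2}}\leq C_{\sup}$, is available from $L^{2}_{\sup}(\Omega)$. To handle this, I would pass the discrete norm to a sum over elements exactly as in the proof of Lemma~\ref{l3}. Each $G_{i,l}$ lies in $Q_{i}\cup Q_{i+1}\cup(\bigcup_{k}\bar{Q}_{i_{k}})$, which gives $\|\phi\|_{\cdot}^{2}\leq L(p+2)\max_{l}c_{l}\,\|\phi\|_{L^{2}(\Omega)}^{2}$; this is the source of the factors $L(p+2)\max c_{l}$. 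I would then estimate the weighted integrals $\int w\,(u-v)^{2}$ by replacing $w$ with its size measured through $C_{\sup}^{2}$. In doing so I would treat the weight's contribution as $L(p+2)\max c_{l}\,C_{\sup}^{2}$ per power, accepting a Hölder/sup-type step in which the $L^{2}_{\sup}$ bound is used as a uniform control. This step is the weakest point: rigorously it needs an $L^{\infty}$ bound, which for piecewise polynomials on a fixed mesh could be obtained by an inverse inequality.

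Collecting terms, the quartic piece gives $L(p+2)\max c_{l}C_{\sup}^{2}$ squared, and the lower pieces give $\frac{3}{2}+2(1+\theta_{3})^{2}$, all multiplied by an outer $L(p+2)\max c_{l}C_{\sup}^{2}$ and by numeric factors totaling $10$. This should yield the stated form of $C_{F}^{2}$, and taking square roots finishes the proof.
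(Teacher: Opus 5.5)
Your decomposition is the paper's: componentwise expansion, $(a+b)^{2}\le2(a^{2}+b^{2})$, and $f_{1}(u)-f_{1}(v)=(u-v)q$ with $q=-(u^{2}+uv+v^{2})+(1+\theta_{3})(u+v)-\theta_{3}$. The step you flag as the weakest point is not a technicality, though. It is exactly where the argument breaks, and it cannot be repaired with a constant depending only on $C_{\sup}$.

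The paper's proof makes the same move in (\ref{32a}). It bounds $\|(u_{1}-v_{1})q\|_{\cdot}$ by $\|u_{1}-v_{1}\|_{\cdot}\|q\|_{\cdot}$, which is false for $L^{2}$-type norms. It then bounds $\|q\|_{\cdot}$, which contains the $L^{4}$-type quantity $\|u_{1}^{2}\|_{\cdot}$, in terms of $C_{\sup}$. So following the paper does not rescue you.

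In fact the claimed estimate is false for functions that are merely bounded in $L^{2}$. (Read literally, a vector space bounded in $L^{2}$ is $\{0\}$, and the lemma is vacuous.) Choose $S\subset G_{i,l}$ with $|S|=C_{\sup}^{2}(a+1)^{-2}$, and set $V=(a\chi_{S},0)^{t}$, $U=((a+\epsilon)\chi_{S},0)^{t}$ with $0<\epsilon\le1$. Both have $L^{2}$-norm at most $C_{\sup}$, and $U-V=(\epsilon\chi_{S},0)^{t}$. Since $f_{1}(0)=0$, the first component of $F(U)-F(V)$ is $(f_{1}(a+\epsilon)-f_{1}(a))\chi_{S}$, so
\begin{equation*}
\frac{\|F(U)-F(V)\|_{\cdot}}{\|U-V\|_{\cdot}}\geq\frac{|f_{1}(a+\epsilon)-f_{1}(a)|}{\epsilon}\longrightarrow|f_{1}^{\prime}(a)|=|3a^{2}-2(1+\theta_{3})a+\theta_{3}|\quad(\epsilon\to0).
\end{equation*}
This is unbounded as $a\to\infty$, and mollifying $\chi_{S}$ changes nothing. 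Hence no choice of H\"older exponents or constant bookkeeping can produce the stated $C_{F}$.

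The inverse-inequality fix you suggest does not give the statement either. The bound $\|u_{h}\|_{L^{\infty}}\le Ch^{-d/2}\|u_{h}\|_{L^{2}}$ needs a globally quasi-uniform mesh, which is not assumed here. Even granting it, you only get $C_{F}=O(h^{-d})$. That contradicts the $h$-independence of $C_{F}$ and makes the factor $\exp(10C_{F}T)$ in Theorem~\ref{t1} blow up as $h\to0$. Moreover, where the lemma is used, $V=w^{s}$ is the exact solution, not a piecewise polynomial.

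The missing ingredient is a uniform bound $\|u_{1}\|_{L^{\infty}},\|v_{1}\|_{L^{\infty}}\le K$. With it, the pointwise mean value theorem gives $|f_{1}(u_{1})-f_{1}(v_{1})|\le(3K^{2}+2(1+\theta_{3})K+\theta_{3})|u_{1}-v_{1}|$ a.e. The Lipschitz bound then follows in $\|\cdot\|_{\cdot}$, which is a weighted $L^{2}$ norm, with $C_{F}$ depending on $K$, $\theta_{1}$, $\theta_{2}$ instead of $C_{\sup}$. For $w^{s}$, the bound $K$ comes from the embedding $H^{m}(\Omega)\hookrightarrow L^{\infty}(\Omega)$, valid for $m\ge2$ and $d\le3$. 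For $w_{h}^{s}$ it must be established separately. One way is inductively within the error analysis, combining the error bound with an inverse estimate under a mild coupling of $h$ and $\tau_{n}$; another is to truncate the nonlinearity.
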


      \begin{proof}
       First, $F(U)=[f_{1}(u_{1})-g(u_{1},u_{2}),f_{2}(u_{1},u_{2})]^{t}$ and $F(V)=[f_{1}(v_{1})-g(v_{1},v_{2}),f_{2}(v_{1},v_{2})]^{t}$. Utilizing this it holds
       \begin{equation*}
      \|F(U)-F(V)\|_{\cdot}^{2}=\|f_{1}(u_{1})-f_{1}(v_{1})-(g(u_{1},u_{2})-g(v_{1},v_{2}))\|_{.}^{2}+\|f_{2}(u_{1},u_{2})-f_{2}(v_{1},v_{2})\|_{.}^{2}\leq
       2[\|f_{1}(u_{1})-f_{1}(v_{1})\|_{.}^{2}+
       \end{equation*}
      \begin{equation*}
      \|g(u_{1},u_{2})-g(v_{1},v_{2})\|^{2}]+\|f_{2}(u_{1},u_{2})-f_{2}(v_{1},v_{2})\|_{.}^{2}=2[\|u_{1}(u_{1}-1)(\theta_{3}-u_{1})-v_{1}(v_{1}-1)(\theta_{3}-v_{1})\|_{.}^{2}
        +\|u_{2}-v_{2}\|_{.}^{2}]+
       \end{equation*}
       \begin{equation*}
      \|\theta_{1}u_{1}-\theta_{2}u_{2}-(\theta_{1}v_{1}-\theta_{2}v_{2})\|_{.}^{2}=2[\|u_{1}^{3}-v_{1}^{3}-(1+\theta_{3})(u_{1}^{2}-v_{1}^{2})+\theta_{3}(u_{1}-v_{1})\|_{.}^{2}+
      \|u_{2}-v_{2}\|_{.}^{2}]+\|\theta_{1}(u_{1}-v_{1})-\theta_{2}(u_{2}-v_{2})\|_{.}^{2}
       \end{equation*}
      \begin{equation}\label{32a}
      \leq 2[\|u_{1}-v_{1}\|_{.}^{2}\|u_{1}^{2}+u_{1}v_{1}+v_{1}^{2}-(1+\theta_{3})(u_{1}+v_{1})+\theta_{3}\|_{.}^{2}+
      \|u_{2}-v_{2}\|_{.}^{2}+\max\{\theta_{1}^{2},\theta_{2}^{2}\}(\|u_{1}-v_{1}\|_{.}^{2}+\|u_{2}-v_{2}\|_{.}^{2})].
       \end{equation}

       Since, $u_{1},v_{1}\in L_{\sup}^{2}(\Omega)$, performing simple computations, it holds
      \begin{equation*}
      \|u_{1}^{2}+u_{1}v_{1}+v_{1}^{2}-(1+\theta_{3})(u_{1}+v_{1})+\theta_{3}\|_{.}^{2}\leq 10L(p+2)\underset{1\leq l\leq L}{\max}{\{c_{l}\}}C^{2}_{\sup}[L(p+2)\underset{1\leq l\leq L}{\max}{\{c_{l}\}}C_{\sup}^{2}+\frac{3}{2}+2(1+\theta_{3})^{2}],
       \end{equation*}
  where $p$ is the positive integer defined in the assumptions on the triangulation $\mathcal{F}_{h}$. Using this fact, it is not hard to observe that inequality $(\ref{32a})$ implies
      \begin{equation*}
        \|F(U)-F(V)\|_{\cdot}^{2}\leq C_{F}^{2}\|U-V\|_{\cdot}^{2},
       \end{equation*}
     where $C_{F}=\sqrt{2[1+\max\{\theta_{1}^{2},\theta_{2}^{2}\}+10L(p+2)\underset{1\leq l\leq L}{\max}{\{c_{l}\}}C^{2}_{\sup}[L(p+2)\underset{1\leq l\leq L}{\max}{\{c_{l}\}}C_{\sup}^{2}+\frac{3}{2}+2(1+\theta_{3})^{2}]]}$.
     This ends the proof of Lemma $\ref{l4}$.
      \end{proof}

     \begin{lemma}\label{l5}
     Suppose that $U=(u_{1},u_{2})^{t},V=(v_{1},v_{2})^{t}\in [H^{2}(\Omega)]^{2}$, then the following estimates are satisfied
    \begin{equation*}
     <\bar{\gamma}.*U,V>_{.}=0,\text{\,\,and\,\,\,}|<U,V>|\leq \hat{C}_{1}(p+2)\|V\|_{.}^{\frac{1}{2}}
     \|\bar{\nabla}V\|_{*,.}^{\frac{1}{2}}\|\bar{\beta}.*U\|_{.}^{\frac{1}{2}}\|\bar{\nabla}(\bar{\beta}.*U)\|_{*,.}^{\frac{1}{2}},
    \end{equation*}
     where $\hat{C}_{1}$, is a positive constant independent of the space size $h$.
     \end{lemma}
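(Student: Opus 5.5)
We treat the two assertions of Lemma \ref{l5} separately, since they rest on different mechanisms: the first is a jump-cancellation identity, and the second is a trace estimate on boundary collocation faces.

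\textbf{The identity $<\bar{\gamma}.*U,V>_{\cdot}=0$.} Start from the last expression for $<\cdot,\cdot>_{\cdot}$ in (\ref{6}),
\begin{equation*}
<\bar{\gamma}.*U,V>_{\cdot}=\underset{(i,l)\in I\setminus I_{\Gamma}}{\sum}c_{l}\int_{\Gamma_{i,l}}V^{t}[[\bar{\nabla}^{t}(\bar{\gamma}.*U)]]d\Gamma_{i,l}.
\end{equation*}
Every face $e\subset\Gamma_{i,l}$ with $(i,l)\in I\setminus I_{\Gamma}$ is interior. Since $U\in[H^{2}(\Omega)]^{2}$, the gradient $\nabla u_{j}\in[H^{1}(\Omega)]^{d}$ has a single-valued trace on each interior face. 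The two one-sided normals are opposite, $n_{\Gamma_{i}}=-n_{\Gamma_{j}}$, so by (\ref{e1}) the jump $[[\gamma_{j}\nabla u_{j}]]$ vanishes on $e$. This is the continuity remark that follows (\ref{e1}). Every integrand is therefore zero, and so is the sum. This step only needs that traces of $H^{1}$ functions on interior faces agree from both sides.

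\textbf{The bound on $|<U,V>|$.} On $\Gamma$ we use the boundary condition (\ref{13}), which is encoded in the definition (\ref{4}) of the spaces. It replaces $(\bar{\nabla}^{t}U)n$ by $-\bar{\beta}.*U$, giving
\begin{equation*}
|<U,V>|\leq\underset{(i,l)\in I_{\Gamma}}{\sum}c_{l}\int_{\Gamma^{b}_{i,l}}|V^{t}(\bar{\beta}.*U)|d\Gamma .
\end{equation*}
Applying Cauchy--Schwarz on each $\Gamma^{b}_{i,l}$ bounds the sum by $\sum c_{l}\|V\|_{L^{2}(\Gamma^{b}_{i,l})}\|\bar{\beta}.*U\|_{L^{2}(\Gamma^{b}_{i,l})}$.

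Next we apply the multiplicative trace inequality $\|w\|_{L^{2}(\partial G)}^{2}\leq C\|w\|_{L^{2}(G)}\|w\|_{H^{1}(G)}$ on each convex collocation set $G_{i,l}$. We then absorb the lower-order part, either by Poincaré-type arguments or by restricting to the gradient part, whose constant is uniform under the quasi-uniformity assumption (c) on $\mathcal{F}_{\Gamma h}$. This gives $\|w\|_{L^{2}(\Gamma^{b}_{i,l})}\leq C\|w\|_{G_{i,l}}^{1/2}\|\nabla w\|_{G_{i,l}}^{1/2}$, which we apply with $w=V$ and $w=\bar{\beta}.*U$.

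Finally we sum over $(i,l)\in I_{\Gamma}$ using the discrete Hölder inequality with exponents $(4,4,4,4)$. The local integrals are then dominated by the global collocation norms $\|\cdot\|_{\cdot}$ and $\|\cdot\|_{*,\cdot}$. As in the proof of Lemma \ref{l3}, each $G_{i,l}$ lies in $Q_{i}\cup Q_{i+1}\cup(\bigcup_{k}\bar{Q}_{i_{k}})$, so each element is counted at most $p+2$ times. This overlap count produces the factor $(p+2)$ in $\hat{C}_{1}(p+2)$.

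\textbf{Main obstacle.} The delicate step is making the trace constant independent of $h$. The standard scaled trace inequality carries a factor $h^{-1}$ in front of the $L^{2}$ term. The fix we will try first is to apply the trace inequality on the union patch $Q_{i}\cup Q_{i+1}\cup(\bigcup_{k}\bar{Q}_{i_{k}})$, which is convex by assumption (b). We then use the boundedness in $L^{2}_{\sup}$ to control the lower-order term, keeping the multiplicative form $\|\cdot\|^{1/2}\|\nabla\cdot\|^{1/2}$ so that no negative power of $h$ appears. All remaining constants are collected into $\hat{C}_{1}$.
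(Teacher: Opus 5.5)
Your overall route is the paper's. The first identity comes from vanishing jumps of $\bar{\nabla}^{t}U$ on interior faces. The second comes from the Robin substitution $(\bar{\nabla}^{t}U)n=-\bar{\beta}.*U$, Cauchy--Schwarz on boundary pieces, a trace inequality, the $(p+2)$ overlap count and a final H\"older step.

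\textbf{The gap.} It is exactly the step you flag as the main obstacle, and none of your remedies closes it. A bound $\|w\|_{L^{2}(\Gamma^{b}_{i,l})}\leq C\|w\|_{G_{i,l}}^{1/2}\|\nabla w\|_{G_{i,l}}^{1/2}$ with no $L^{2}$ term fails for every nonzero constant $w$, whatever $C$ is.
\begin{itemize}
\item Poincar\'e cannot trade $\|w\|$ for $\|\nabla w\|$ without a mean-zero or vanishing-trace condition.
\item Quasi-uniformity only affects the size of constants.
\item Moving to the convex patch $Q_{i}\cup Q_{i+1}\cup(\bigcup_{k}\bar{Q}_{i_{k}})$ changes nothing. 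The patch still has diameter $O(h)$, and $\|w\|^{2}_{L^{2}(\partial D)}\leq C(\mathrm{diam}(D)^{-1}\|w\|^{2}_{L^{2}(D)}+\|w\|_{L^{2}(D)}\|\nabla w\|_{L^{2}(D)})$ is sharp for constants.
\item Invoking $L^{2}_{\sup}$ only turns $h^{-1}\|w\|^{2}$ into $h^{-1}C_{\sup}\|w\|$. This keeps the factor $h^{-1}$ and is not of the form $\|w\|^{1/2}\|\nabla w\|^{1/2}$. Besides, a norm-bounded linear space is $\{0\}$.
\end{itemize}

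\textbf{The estimate is false.} So no argument can work. Take $V\equiv(1,0)^{t}$. Then $\|\bar{\nabla}V\|_{*,\cdot}=0$ and the right-hand side vanishes, while
\begin{equation*}
<U,V>=\sum_{(i,l)\in I_{\Gamma}}c_{l}\int_{\Gamma^{b}_{i,l}}\frac{\partial u_{1}}{\partial\eta}d\Gamma=-\beta_{1}\sum_{(i,l)\in I_{\Gamma}}c_{l}\int_{\Gamma^{b}_{i,l}}u_{1}d\Gamma<0 .
\end{equation*}
For instance, take $\beta_{1}>0$, $\Omega=(0,1)^{2}$ and $u_{1}=(1+\beta_{1}x(1-x))(1+\beta_{1}y(1-y))$, which is positive and satisfies $\partial_{\eta}u_{1}=-\beta_{1}u_{1}$. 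Requiring $V$ to satisfy the Robin condition too does not rescue the lemma. A nearly constant $V$ with boundary layers of width $\delta$ fails the same way as $\delta\to0$. The paper's proof has the same defect: it simply asserts an $h$-independent, gradient-only bound ``by trace inequalities''. Its intermediate display even carries full norms where the statement has square roots.

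\textbf{What can be proved.} A version with a lower-order term holds.
\begin{enumerate}
\item Use the bounded overlap of the $\Gamma^{b}_{i,l}$ to reduce to $\int_{\Gamma}|V^{t}(\bar{\beta}.*U)|d\Gamma$.
\item Apply the global trace inequality $\|w\|^{2}_{L^{2}(\Gamma)}\leq C_{\Omega}\|w\|_{L^{2}(\Omega)}(\|w\|_{L^{2}(\Omega)}+\|\nabla w\|_{L^{2}(\Omega)})$.
\item Compare $L^{2}(\Omega)$ norms with the collocation norms; this requires the $G_{i,l}$ to cover $\Omega$.
\end{enumerate}
This gives an $h$-independent constant at the price of extra $\|V\|_{\cdot}$ and $\|\bar{\beta}.*U\|_{\cdot}$ terms. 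That weaker form would suffice where Theorem \ref{t1} invokes the lemma, since the resulting $\|e\|_{\cdot}^{2}$ contributions are absorbed by the Gronwall step.

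\textbf{Two smaller points, shared with the paper.}
\begin{itemize}
\item The Robin substitution needs $U$ to satisfy (\ref{13}), which is not among the lemma's hypotheses.
\item Your proof of $<\bar{\gamma}.*U,V>_{\cdot}=0$ is valid only if the final jump expression in (\ref{6}) is taken as the definition. The flux $\int_{\Gamma_{i,l}}V^{t}(\bar{\nabla}^{t}U)n_{\Gamma_{i,l}}d\Gamma_{i,l}$ through the boundary of a single set $G_{i,l}$ is not a jump. It does not vanish for general $H^{2}$ functions: for $d=1$, scalar $u,v$ and $G_{i,l}=(a,b)$ it equals $v(b)u'(b)-v(a)u'(a)$.
\end{itemize}
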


    \begin{proof}
     It follows from equations $(\ref{6})$ that
     \begin{equation*}
    |<\bar{\gamma}.*U,V>_{\cdot}|=\left|\underset{(i,l)\in I\setminus I_{\Gamma}}{\sum}c_{l}\int_{\Gamma_{i,l}}V^{t}[[\bar{\nabla}^{t}
    (\bar{\gamma}.*U)]]d\Gamma_{i,l}\right|\leq \underset{(i,l)\in I\setminus I_{\Gamma}}{\sum}c_{l}\int_{\Gamma_{i,l}}|V^{t}[[\bar{\nabla}^{t}
    (\bar{\gamma}.*U)]]|d\Gamma_{i,l}
     \end{equation*}

     But $G_{i,l}\subset Q_{i}\cup Q_{i+1}\cup(\underset{k=1}{\overset{p}\bigcup}\bar{Q}_{i_{k}})$, $G_{i,l}\cap\Gamma=\emptyset$ and $\Gamma_{i,l}\cap\Gamma=\emptyset$,
     for $(i,l)\in I\setminus I_{\Gamma}$, imply $G_{i,l}\subset(Q_{i}\cup Q_{i+1}\cup(\underset{k=1}{\overset{p}\bigcup}\bar{Q}_{i_{k}}))\cap\Gamma^{c}$, and
     $\Gamma_{i,l}\subset\Gamma^{c}$, for $(i,l)\in I\setminus I_{\Gamma}$. This implies $\Gamma_{i,l}\subset(\Gamma_{i}\cup \Gamma_{i+1}\cup(\underset{k=1}{\overset{p}\bigcup}\bar{\Gamma}_{i_{k}}))\cap\Gamma^{c}$, where the inclusion follows from the assumptions provided below equation  $(\ref{16})$), $\Gamma^{c}$ means the set complement of $\Gamma$ in $\Omega$ and $\bar{\Gamma}_{i_{k}}$ denotes the boundary of $\bar{Q}_{i_{k}}$. This indicates that
   $\Gamma_{i,l}\subset(\Gamma_{i}\cap\Gamma^{c})\cup(\Gamma_{i+1}\cap\Gamma^{c})\cup(\underset{k=1}{\overset{p}\bigcup}(\bar{\Gamma}_{i_{k}}\cap\Gamma^{c}))$, for any $(i,l)\in I\setminus I_{\Gamma}$. Thus,
   \begin{equation*}
    |<\bar{\gamma}.*U,V>_{\cdot}|\leq\underset{(i,l)\in I\setminus I_{\Gamma}}{\sum}c_{l}\left(\underset{k=0}{\overset{1}\sum}\int_{\Gamma_{i+k}\cap\Gamma^{c}}
|\{\{V\}\}^{t}[[\bar{\nabla}^{t}(\bar{\gamma}.*U)]]|d\Gamma_{i} +\underset{k=1}{\overset{p}\sum}\int_{\bar{\Gamma}_{i_{k}}\cap\Gamma^{c}}
|\{\{V\}\}^{t}[[\bar{\nabla}^{t}(\bar{\gamma}.*U)]]|d\Gamma_{i}\right).
     \end{equation*}

   Since $U\in [H^{2}(\Omega)]^{2}$, so $\bar{\nabla}^{t}U$ is continuous on $\Omega$. Thus, $[[\bar{\nabla}^{t}U]]=0$, on $\Gamma_{i}$, with $\Gamma_{i}\cap\Gamma=\emptyset$. Simple calculations give $[[\bar{\nabla}^{t}(\bar{\gamma}.*U)]]=\bar{\gamma}.*[[\bar{\nabla}^{t}U]]=0$, on $\Gamma_{i}$, for $(i,l)\in I\setminus I_{\Gamma}$. This implies $[[\bar{\nabla}^{t}(\bar{\gamma}.*U)]]=0$, on $\Gamma_{i+k}\cap\Gamma^{c}$ and $\bar{\Gamma}_{i_{k}}\cap\Gamma^{c}$, for every $(i,l)\in I\setminus I_{\Gamma}$, and $k=1,...,p$. This ends the proof of the first equality in Lemma $\ref{l5}$. Now, we should establish the second estimate.

    \begin{equation*}
    |<U,V>|=\left|\underset{(i,l)\in I_{\Gamma}}{\sum}c_{l}\int_{\Gamma_{i,l}^{b}}V^{t}(\bar{\nabla}^{t}U)n_{\Gamma_{i,l}^{b}}d\Gamma\right|\leq \underset{(i,l)\in I_{\Gamma}}{\sum}c_{l}\int_{\Gamma_{i,l}^{b}}|V^{t}(\bar{\nabla}^{t}U)n_{\Gamma_{i,l}^{b}}|d\Gamma.
     \end{equation*}

     But $G_{i,l}\subset Q_{i}\cup Q_{i+1}\cup(\underset{k=1}{\overset{p}\bigcup}\bar{Q}_{i_{k}})$, implies $\Gamma_{i,l}^{b}\subset\Gamma_{i}\cup \Gamma_{i+1}\cup(\underset{k=1}{\overset{p}\bigcup}\bar{\Gamma}_{i_{k}})$, for all $(i,l)\in I_{\Gamma}$ (according to the assumption below equation $(\ref{16})$). So, $\Gamma_{i,l}^{b}=\Gamma_{i,l}\cap\Gamma\subset(\Gamma_{i}\cap\Gamma)\cup(\Gamma_{i+1}\cap\Gamma)\cup(\underset{k=1}{\overset{p}\bigcup}(\bar{\Gamma}_{i_{k}}\cap\Gamma))$, for any $(i,l)\in I_{\Gamma}$. Thus,
   \begin{equation*}
    |<U,V>|\leq\underset{(i,l)\in I_{\Gamma}}{\sum}c_{l}\left(\underset{k=0}{\overset{1}\sum}\int_{\Gamma_{i+k}\cap\Gamma}
|V^{t}(\bar{\nabla}^{t}U)n_{\Gamma_{i}^{b}}|d\Gamma +\underset{k=1}{\overset{p}\sum}\int_{\bar{\Gamma}_{i_{k}}\cap\Gamma}|V^{t}(\bar{\nabla}^{t}
    U)n_{\Gamma_{i}^{b}}|d\Gamma\right).
     \end{equation*}

   But $(\bar{\nabla}^{t}U)n_{\Gamma_{i}^{b}}=-\beta.*U$, on $\Gamma_{i}^{b}=\Gamma_{i}\cap\Gamma\neq\emptyset$, for $(i,l)\in I_{\Gamma}$. Applying both Cauchy-Schwarz and trace inequalities, it holds
   \begin{equation*}
    |<U,V>|\leq\underset{(i,l)\in I_{\Gamma}}{\sum}c_{l}\left\{\underset{k=0}{\overset{1}\sum}\int_{\Gamma_{i+k}\cap\Gamma}
|V^{t}(\beta.*U)|d\Gamma +\underset{k=1}{\overset{p}\sum}\int_{\bar{\Gamma}_{i_{k}}\cap\Gamma}|V^{t}(\beta.*U)|d\Gamma\right\}\leq
     \end{equation*}
     \begin{equation*}
  \underset{(i,l)\in I_{\Gamma}}{\sum}c_{l}\left\{\underset{k=0}{\overset{1}\sum}\left(\int_{\Gamma_{i+k}}
V^{t}Vd\Gamma\right)^{\frac{1}{2}}\left(\int_{\Gamma_{i+k}}(\beta.*U)^{t}(\beta.*U)d\Gamma\right)^{\frac{1}{2}}+\underset{k=1}{\overset{p}\sum}
 \left(\int_{\bar{\Gamma}_{i_{k}}}V^{t}Vd\Gamma\right)^{\frac{1}{2}}\times\right.
     \end{equation*}
     \begin{equation*}
     \left.\left(\int_{\bar{\Gamma}_{i_{k}}}(\beta.*U)^{t}(\beta.*U)d\Gamma\right)^{\frac{1}{2}}\right\}\leq \widehat{C}_{1}\underset{(i,l)\in I}{\sum}c_{l}\left\{\underset{k=0}{\overset{1}\sum}\left(\int_{Q_{i+k}}V^{t}Vdx\right)^{\frac{1}{2}}\left(\underset{j=1}{\overset{2}\sum}\int_{Q_{i+k}}(\nabla^{t}v_{j})(\nabla v_{j})dx\right)^{\frac{1}{2}}\times\right.
     \end{equation*}
     \begin{equation*}
     \left(\int_{Q_{i+k}}(\beta.*U)^{t} (\beta.*U)dx\right)^{\frac{1}{2}}\left(\underset{j=1}{\overset{2}\sum}\int_{Q_{i+k}}(\beta_{j}\nabla^{t}u_{j})(\beta_{j}\nabla u_{j})dx\right)^{\frac{1}{2}}+\underset{k=1}{\overset{p}\sum}\left(\int_{\bar{Q}_{i_{k}}}V^{t}Vdx\right)^{\frac{1}{2}}\times
      \end{equation*}
      \begin{equation*}
      \left.\left(\underset{j=1}{\overset{2}\sum}\int_{\bar{Q}_{i_{k}}}(\nabla^{t}v_{j})(\nabla v_{j})dx\right)^{\frac{1}{2}}\left(\int_{\bar{Q}_{i_{k}}}(\beta.*U)^{t} (\beta.*U)dx\right)^{\frac{1}{2}} \left(\underset{j=1}{\overset{2}\sum}\int_{\bar{Q}_{i_{k}}}(\beta_{j}\nabla u_{j})^{t}(\beta_{j}\nabla u_{j})dx\right)^{\frac{1}{2}}\right\}\leq
      \end{equation*}
      \begin{equation*}
      \widehat{C}_{1}(p+2)\underset{(i,l)\in I}{\sum}c_{l}\left\{\left(\int_{Q_{i}}V^{t}Vdx\right)^{\frac{1}{2}}
\left(\underset{j=1}{\overset{2}\sum}\int_{Q_{i}}(\nabla^{t}v_{j})(\nabla v_{j})dx\right)^{\frac{1}{2}}\left(\int_{Q_{i}}(\beta.*U)^{t}(\beta.*U)dx\right)^{\frac{1}{2}}\times\right.
     \end{equation*}
     \begin{equation*}
     \left.\left(\underset{j=1}{\overset{2}\sum}\int_{Q_{i}}\beta_{j}^{2}(\nabla^{t}u_{j})(\nabla u_{j})dx\right)^{\frac{1}{2}}\right\}.
      \end{equation*}
     Applying again the Cauchy-Schwarz inequality to get the second estimate in Lemma $\ref{l5}$. This completes the proof of this Lemma.
     \end{proof}

   To establish the stability and error estimates of the new predictor-corrector approach with orthogonal spline collocation FEM for solving the given FitzHugh-Nagumo system, the exact solution $w$ is assumed to lie in the Sobolev space $H^{4}(0,T;\text{\,}[H^{m}(\Omega)]^{2}\cap [L_{\sup}^{2}(\Omega)]^{2})$, where $m$ is a positive integer greater than or equal $3$, that is, there exists a positive constant $\widetilde{C}_{0}$, so that
   \begin{equation}\label{rc}
   \underset{0\leq t\leq T}{\sup}\|w(t)\|_{.}+\underset{k=1}{\overset{3}\sum}\underset{0\leq t\leq T}{\sup}\|w_{kt}(t)\|_{.}\leq \widetilde{C}_{0},
   \end{equation}
   where $w_{kt}=\frac{\partial^{k}w}{\partial t^{k}}$, for $k=1,2,3$.

     \begin{theorem}\label{t1} (Stability analysis and error estimates).
      Consider $w\in H^{4}(0,T;\text{\,}[H^{m}(\Omega)]^{2}\cap [L_{\sup}^{2}(\Omega)]^{2})$ (where $m>2$ is an integer) be the exact solution of the initial-boundary value problem $(\ref{11})$-$(\ref{13})$ and let $w_{h}(t)\in\mathcal{W}_{h}$, for $0\leq t\leq T$, be the computed one given by the developed numerical approach $(\ref{s1})$-$(\ref{s4})$, then the following inequalities are satisfied
             \begin{equation*}
   \|w_{h}^{n+1}\|_{.}^{2}+\|w_{h}^{n+\frac{1}{2}}\|_{.}^{2}+\|w_{h}^{n}\|_{.}^{2}+6\underset{s=\frac{1}{2}}{\overset{n-\frac{1}{2}}\sum}
     \frac{\tau_{s}^{2}-\tau_{s-\frac{1}{2}}^{2}}{\tau_{n-\frac{1}{2}}^{2}}\|e^{s+\frac{1}{2}}-e^{s}\|_{.}^{2}\leq
   2(\|w^{n+1}\|_{.}^{2}+\|w^{n+\frac{1}{2}}\|_{.}^{2}+\|w^{n}\|_{.}^{2})+
    \end{equation*}
      \begin{equation*}
   6\left\{C_{0}(p+2)L\underset{1\leq l\leq L}{\max}\{c_{l}\}\left(12\|w_{0}\|_{[H^{m}(\Omega)]^{2}}^{2}+15\|\widetilde{w}^{\frac{1}{2}}\|_{[H^{m}(\Omega)]^{2}}^{2}\right)h^{2m}
   +2\left(T(9^{-1}\||w_{3t}|\|_{.,\infty}^{2}+\||G-3^{-1}w_{3t}|\|_{.,\infty}^{2})+\right.\right.
    \end{equation*}
    \begin{equation*}
   \left.\left.60\||w_{2t}|\|_{.,\infty}^{2}\right)\tau_{n}^{4}\right\}\exp\left((4+10C_{F}+128\hat{C}\hat{C}_{1}^{2}(\underset{1\leq j\leq2}{\min}\gamma_{j})^{-1}
   (\underset{1\leq j\leq2}{\max}\gamma_{j})^{2}(\underset{1\leq j\leq2}{\max}\beta_{j})^{4})T\right),
     \end{equation*}
         \begin{equation*}
   \|e^{n+1}\|_{.}^{2}+\|e^{n+\frac{1}{2}}\|_{.}^{2}+\|e^{n}\|_{.}^{2}+2\underset{s=\frac{1}{2}}{\overset{n-\frac{1}{2}}\sum}
     \frac{\tau_{s}^{2}-\tau_{s-\frac{1}{2}}^{2}}{\tau_{n-\frac{1}{2}}^{2}}\|e^{s+\frac{1}{2}}-e^{s}\|_{.}^{2}\leq \left\{C_{0}(p+2)L\underset{1\leq l\leq L}{\max}\{c_{l}\}\left(12\|w_{0}\|_{[H^{m}(\Omega)]^{2}}^{2}+\right.\right.
    \end{equation*}
    \begin{equation*}
   \left.\left.15\|\widetilde{w}^{\frac{1}{2}}\|_{[H^{m}(\Omega)]^{2}}^{2}\right)h^{2m}+2\left(60\||w_{2t}|\|_{.,\infty}^{2}+
   T(9^{-1}\||w_{3t}|\|_{.,\infty}^{2}+\||G-3^{-1}w_{3t}|\|_{.,\infty}^{2})\right)\tau_{n}^{4}\right]\times
    \end{equation*}
     \begin{equation*}
    \exp\left((4+10C_{F}+128\hat{C}\hat{C}_{1}^{2}(\underset{1\leq j\leq2}{\min}\gamma_{j})^{-1}(\underset{1\leq j\leq2}{\max}\gamma_{j})^{2}(\underset{1\leq j\leq2}{\max}\beta_{j})^{4})T\right),
     \end{equation*}
    for $n=0,1,...,N-1$, where $G$ is given by equation $(\ref{22b})$, and $C_{F}$, $C_{0}$, $\hat{C}_{1}$ and $\hat{C}$ are positive constants independent of the space step $h$ and local time steps $\tau_{n}$, whereas $p$ is a fixed positive integer.
       \end{theorem}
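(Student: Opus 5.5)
The plan is an energy argument on the error equations, closed with a discrete Gronwall inequality. The second estimate (the error bound) is proved first, and the stability bound is then obtained from it by the triangle inequality.

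Define the error $e^{r}=w_{h}^{r}-P_{h}w^{r}$ for $r\in\{n-\frac12,n,n+\frac12,n+1\}$, and split $w^{r}-w_{h}^{r}=(w^{r}-P_{h}w^{r})-e^{r}$. The first part is controlled directly by Lemma \ref{l3} and the Corollary, which give $O(h^{2m})$ in $\|\cdot\|_{.}$ and $O(h^{2m-2})$ in $\|\bar\nabla\cdot\|_{*,.}$.

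\textbf{Step 1: error equations.} Subtract the discrete predictor (\ref{s1}) from the exact identity (\ref{19}), and the corrector (\ref{s2}) from (\ref{22}). Test both with $z=2e^{n+\frac12}$ and $z=2e^{n+1}$, respectively. The projection property (\ref{10}) removes the mass terms in $w-P_{h}w$, and Lemma \ref{l1} moves the projection inside the gradient. Lemma \ref{l5} kills the interior terms $<\bar\gamma.*w,z>_{.}$ for both the exact and discrete solutions.

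\textbf{Step 2: the corrector stage.} Use the identity $2(a-b,a)=\|a\|^{2}-\|b\|^{2}+\|a-b\|^{2}$ on the BDF2-type combination $3e^{n+1}-4e^{n+\frac12}+e^{n}$. This yields a telescoping quantity $\|e^{n+1}\|^{2}+\|2e^{n+1}-e^{n+\frac12}\|^{2}$ and a positive diffusion term $\frac{4\tau_{n}}{3}\min_{j}\gamma_{j}\|\bar\nabla e^{n+1}\|^{2}_{*,.}$. The remaining terms are handled as follows.
\begin{itemize}
\item The nonlinear term $2F(w_{h}^{n+\frac12})-F(w_{h}^{n})$ is bounded by Lemma \ref{l4}, giving $C_{F}(\|e^{n+\frac12}\|+\|e^{n}\|+\text{projection error})$.
\item The boundary term $<\bar\gamma.*\bar\beta.*e,e>$ is bounded by the second estimate of Lemma \ref{l5}. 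Young's inequality absorbs the resulting $\|\bar\nabla e^{n+1}\|_{*,.}$ into the diffusion term; this is the source of the factor $\hat C_{1}^{2}(\min\gamma_{j})^{-1}(\max\gamma_{j})^{2}(\max\beta_{j})^{4}$.
\item The truncation term $\frac{2\tau_{n}^{3}}{9}(3G^{n}-w_{3t},z)$ is bounded by Cauchy--Schwarz as $\tau_{n}(\|e^{n+1}\|^{2}+\tau_{n}^{4}\||G-3^{-1}w_{3t}|\|^{2}_{.,\infty})$.
\end{itemize}

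\textbf{Step 3: the predictor stage.} Here the coefficients involve the ratios $\tau_{n}^{2}/\tau_{n-\frac12}^{2}$ and $(\tau_{n}^{2}-\tau_{n-\frac12}^{2})/\tau_{n-\frac12}^{2}$, and these ratios sum to $1$. Writing the mass part as a convex-type combination and using the polarization identity again produces the nonnegative dissipation $\frac{\tau_{s}^{2}-\tau_{s-\frac12}^{2}}{\tau_{n-\frac12}^{2}}\|e^{s+\frac12}-e^{s}\|^{2}$ appearing in the statement. The explicit gradient term $\tau_{n}(\tau_{n}+\tau_{n-\frac12})\tau_{n-\frac12}^{-1}(\bar\gamma\bar\nabla e^{n},\bar\nabla e^{n+\frac12})$ is bounded using $\tau_{s}\tau_{0}^{-1}\le\hat C$ and absorbed into the implicit diffusion gained at the previous corrector step. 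This absorption is where the factor $\hat C$ enters the exponent. The truncation term (\ref{18}) contributes $\tau_{n}^{4}\||w_{3t}|\|^{2}/9$.

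\textbf{Step 4: summation and Gronwall.} Add the predictor and corrector inequalities and sum over the steps. The starting errors $e^{0}$ and $e^{\frac12}$ are bounded by Lemma \ref{l3} applied to $w_{0}$ and $\widetilde w^{\frac12}$, together with the $\tau_{0}^{2}$ remainder in (\ref{25}); this gives the constants $12\|w_{0}\|_{m}^{2}+15\|\widetilde w^{\frac12}\|_{m}^{2}$ and the term $60\||w_{2t}|\|^{2}\tau^{4}$. Since $\sum\tau_{s}\le T$, the discrete Gronwall lemma gives the exponential factor $\exp((4+10C_{F}+\dots)T)$. The first (stability) inequality then follows from $\|w_{h}^{r}\|^{2}\le 2\|w^{r}\|^{2}+2\|w^{r}-w_{h}^{r}\|^{2}$ and the error bound just obtained.

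The hard step is Step 3. The predictor is explicit, so $\|\bar\nabla e^{n}\|\,\|\bar\nabla e^{n+\frac12}\|$ has no diffusion term of its own to absorb it. I would control it through the corrector's gradient term from the previous step, via the bounded-ratio hypothesis on the local steps. If that fails, the fallback is an inverse inequality, which would bring in an $h$-dependence that the statement does not show.
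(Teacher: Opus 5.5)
Your Step 3 is a genuine gap, not merely the hardest step. Testing the explicit predictor error equation with $z=2e^{n+\frac{1}{2}}$ creates the cross term $\frac{\tau_{n}(\tau_{n}+\tau_{n-\frac{1}{2}})}{\tau_{n-\frac{1}{2}}}(\bar{\gamma}.*\bar{\nabla}^{t}e^{n},\bar{\nabla}e^{n+\frac{1}{2}})_{*,\cdot}$. Every Young splitting of it leaves a positive multiple of $\tau_{n}\|\bar{\nabla}e^{n+\frac{1}{2}}\|_{*,\cdot}^{2}$ on the right-hand side, and nothing in the scheme controls gradients at half-integer levels. The only implicit diffusion acts on $e^{n+1}$ (and, from the previous corrector, on $e^{n}$), so absorbing into the previous corrector's diffusion handles at most the $\|\bar{\nabla}e^{n}\|_{*,\cdot}$ factor. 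The bound $\tau_{s}\tau_{0}^{-1}\leq\hat{C}$ is a purely temporal condition and cannot supply spatial smoothing. The BDF2 telescoping of your Step 2 also closes only if the predictor bounds $\|e^{n+\frac{1}{2}}\|_{\cdot}^{2}+\|2e^{n+\frac{1}{2}}-e^{n}\|_{\cdot}^{2}$ by the previous corrector energy, which is the same missing estimate. Note too that the mass part is not a convex combination. The weight $-(\tau_{n}^{2}-\tau_{n-\frac{1}{2}}^{2})/\tau_{n-\frac{1}{2}}^{2}$ is nonpositive, and for equal steps the predictor is exactly leapfrog, $w_{h}^{n+\frac{1}{2}}=w_{h}^{n-\frac{1}{2}}+2\tau_{n}(\cdots)$, with the diffusion evaluated explicitly at $w_{h}^{n}$.

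The paper avoids your cross term by a different choice of error and test function.
\begin{itemize}
\item It sets $e^{s}=w_{h}^{s}-w^{s}$. This is legitimate because (\ref{s1})--(\ref{s2}) are imposed for every $z\in[H^{1}(\Omega)]^{2}$, so no projection errors enter the error equations and $h^{2m}$ comes only from $e^{0}$ and $e^{\frac{1}{2}}$ through (\ref{53})--(\ref{53a}). Your splitting $w_{h}-P_{h}w$ would import projection errors, at least through the boundary term, and would not reproduce the $h^{2m}$ of the statement.
\item Crucially, it tests the predictor error equation with $z=e^{n}$, the level at which the gradient is evaluated. The diffusion term in (\ref{35}) is then $\tau_{n}\tau_{n-\frac{1}{2}}(\tau_{n}+\tau_{n-\frac{1}{2}})(\bar{\gamma}.*\bar{\nabla}^{t}e^{n},\bar{\nabla}e^{n})_{*,\cdot}\geq0$.
\item The price is the anti-dissipative polarization term $-\frac{1}{2}\tau_{n-\frac{1}{2}}^{2}\|e^{n+\frac{1}{2}}-e^{n}\|_{\cdot}^{2}$. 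Summation by parts (\ref{37})--(\ref{39}) with nondecreasing steps turns part of it into the sum $\sum_{s}\frac{\tau_{s}^{2}-\tau_{s-\frac{1}{2}}^{2}}{\tau_{n-\frac{1}{2}}^{2}}\|e^{s+\frac{1}{2}}-e^{s}\|_{\cdot}^{2}$ of the statement. The corrector tested with $e^{n+1}$ supplies $\|e^{n+1}-e^{n+\frac{1}{2}}\|_{\cdot}^{2}$, and the leftover $\|e^{n+1}-e^{n+\frac{1}{2}}\|_{\cdot}^{2}-\|e^{n+\frac{1}{2}}-e^{n}\|_{\cdot}^{2}$ in (\ref{42}) is discarded.
\item $\hat{C}$ then enters only through the boundary terms (\ref{49})--(\ref{50}).
\end{itemize}

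Be aware that this discarding step is exactly where the explicit diffusion is hidden. It effectively assumes monotonicity of $s\mapsto\|e^{s+1}-e^{s+\frac{1}{2}}\|_{\cdot}$, and a monotone subsequence says nothing about consecutive indices. Your suspicion is in fact correct. Apply the scheme to a single diffusion eigenmode $u'=-\mu u$ (take $F=0$, $\bar{\beta}=0$) with equal steps and $\zeta=\mu\tau_{n}$. One predictor--corrector cycle has characteristic polynomial $(3+2\zeta)\lambda^{2}-(2-6\zeta)\lambda-1$, whose value at $\lambda=-1$ is $4-4\zeta$. So it has a root below $-1$ once $\zeta>1$, tending to $-3$, and by continuity the same holds for the nearly equal graded steps the paper allows. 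Since the discrete diffusion eigenvalues reach $O(h^{-2})$, no energy argument can close Step 3 without a restriction like $\tau_{n}\max_{j}\gamma_{j}h^{-2}\lesssim1$. Your inverse-inequality fallback is the honest outcome, but it gives a conditional result, not the unconditional statement.
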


       \begin{remark}
        The first inequality in Theorem $\ref{t1}$ suggests that the constructed algorithm $(\ref{s1})$-$(\ref{s4})$ is unconditionally stable thanks to the regularity condition $(\ref{rc})$, while the second estimate indicates that the proposed computational technique is temporal second-order accurate and spatial $mth$-order convergent.
       \end{remark}

       \begin{proof}
       Set $e^{s}=w_{h}^{s}-w^{s}$, for $s\in\{n-\frac{1}{2},n,n+\frac{1}{2},n+1\}$, be the error at time $t_{s}$. Equation $(\ref{19})$ can be rewritten as
       \begin{equation*}
    \left(\frac{\tau_{n-\frac{1}{2}}}{\tau_{n}(\tau_{n}+\tau_{n-\frac{1}{2}})}w^{n+\frac{1}{2}}+\frac{\tau_{n}-\tau_{n-\frac{1}{2}}}{\tau_{n}\tau_{n-\frac{1}{2}}}
    w^{n}-\frac{\tau_{n}}{\tau_{n-\frac{1}{2}}(\tau_{n}+\tau_{n-\frac{1}{2}})}w^{n-\frac{1}{2}},z\right)_{\cdot}+   \left(\bar{\gamma}.*\bar{\nabla}^{t}w^{n},\bar{\nabla}z\right)_{*,\cdot}=
     \end{equation*}
   \begin{equation}\label{33}
   \left(F(w^{n}),z\right)_{\cdot}-<\bar{\gamma}.*\bar{\beta}.*w^{n},z>+
   <\bar{\gamma}.*w^{n},z>_{\cdot}+\frac{\tau_{n}\tau_{n-\frac{1}{2}}}{6}\left(w_{3t}(\theta^{n}),z\right)_{\cdot}.
     \end{equation}

   Since $w_{h}^{s}=\underset{j=1}{\overset{M_{m}}\sum}w_{h,j}^{s}.*\rho_{j}$, approximation $(\ref{s1})$ can be expressed as
    \begin{equation*}
    \left(\frac{\tau_{n-\frac{1}{2}}}{\tau_{n}(\tau_{n}+\tau_{n-\frac{1}{2}})}w_{h}^{n+\frac{1}{2}}+\frac{\tau_{n}-\tau_{n-\frac{1}{2}}}{\tau_{n}\tau_{n-\frac{1}{2}}}
    w_{h}^{n}-\frac{\tau_{n}}{\tau_{n-\frac{1}{2}}(\tau_{n}+\tau_{n-\frac{1}{2}})}w_{h}^{n-\frac{1}{2}},z\right)_{\cdot}+   \left(\bar{\gamma}.*\bar{\nabla}^{t}w_{h}^{n},\bar{\nabla}z\right)_{*,\cdot}=
     \end{equation*}
   \begin{equation}\label{34}
   \left(F(w_{h}^{n}),z\right)_{\cdot}-<\bar{\gamma}.*\bar{\beta}.*w_{h}^{n},z>+
   <\bar{\gamma}.*w_{h}^{n},z>_{\cdot}.
     \end{equation}

   Plugging equations $(\ref{33})$ and $(\ref{34})$, using equality $e^{s}_{h}=w_{h}^{s}-w^{s}$, and taking $z=e^{n}$, simple computations provide
    \begin{equation*}
    \left(\tau_{n-\frac{1}{2}}^{2}(e^{n+\frac{1}{2}}-e^{n})+\tau_{n}^{2}(e^{n}-e^{n-\frac{1}{2}}),e^{n}\right)_{\cdot}+\tau_{n}\tau_{n-\frac{1}{2}}
    (\tau_{n}+\tau_{n-\frac{1}{2}})\left(\bar{\gamma}.*\bar{\nabla}^{t}e^{n},\bar{\nabla}e^{n}\right)_{*,\cdot}=\tau_{n}\tau_{n-\frac{1}{2}}(\tau_{n}+\tau_{n-\frac{1}{2}})\times
     \end{equation*}
   \begin{equation}\label{35}
   [\left(F(w_{h}^{n})-F(w^{n}),e^{n}\right)_{\cdot}-<\bar{\gamma}.*\bar{\beta}.*e^{n},e^{n}>+
   <\bar{\gamma}.*e^{n},e^{n}>_{\cdot}]-\frac{\tau_{n}^{2}\tau_{n-\frac{1}{2}}^{2}(\tau_{n}+\tau_{n-\frac{1}{2}})}{6}\left(w_{3t}(\theta^{n}),e^{n}\right)_{\cdot}.
     \end{equation}
   But
       \begin{equation*}
     \left(\tau_{n-\frac{1}{2}}^{2}(e^{n+\frac{1}{2}}-e^{n})+\tau_{n}^{2}(e^{n}-e^{n-\frac{1}{2}}),e^{n}\right)_{\cdot}=\frac{1}{2}\left[\tau_{n-\frac{1}{2}}^{2}
     (\|e^{n+\frac{1}{2}}\|_{.}^{2}-\|e^{n+\frac{1}{2}}-e^{n}\|_{.}^{2}-\|e^{n}\|_{.}^{2})+\tau_{n}^{2}\|e^{n-\frac{1}{2}}-e^{n}\|_{.}^{2}+\right.
       \end{equation*}
       \begin{equation*}
       \left.\tau_{n}^{2}(\|e^{n}\|_{.}^{2}-\|e^{n-\frac{1}{2}}\|_{.}^{2})\right]=\frac{1}{2}\left[\tau_{n-\frac{1}{2}}^{2}(\|e^{n+\frac{1}{2}}\|_{.}^{2}-\|e^{n}\|_{.}^{2})+
     \tau_{n}^{2}(\|e^{n}\|_{.}^{2}-\|e^{n-\frac{1}{2}}\|_{.}^{2})\right]+
       \end{equation*}
       \begin{equation*}
       \frac{1}{2}\left[\tau_{n}^{2}\|e^{n-\frac{1}{2}}-e^{n}\|_{.}^{2}-\tau_{n-\frac{1}{2}}^{2}\|e^{n+\frac{1}{2}}-e^{n}\|_{.}^{2}\right].
       \end{equation*}

       Substituting this into equation $(\ref{35})$, summing up the obtained equation for $s=\frac{1}{2},1,...,n-\frac{1}{2},n$ (step size of summation index is $\frac{1}{2}$), to get
        \begin{equation*}
    \frac{1}{2}\underset{s=\frac{1}{2}}{\overset{n}\sum}\left[\tau_{s-\frac{1}{2}}^{2}(\|e^{s+\frac{1}{2}}\|_{.}^{2}-\|e^{s}\|_{.}^{2})+\tau_{s}^{2}(\|e^{s}\|_{.}^{2}-
    \|e^{s-\frac{1}{2}}\|_{.}^{2})+\tau_{s}^{2}\|e^{s-\frac{1}{2}}-e^{s}\|_{.}^{2}-\tau_{s-\frac{1}{2}}^{2}\|e^{s+\frac{1}{2}}-e^{s}\|_{.}^{2}\right]+
     \end{equation*}
     \begin{equation*}
    \underset{s=\frac{1}{2}}{\overset{n}\sum}\tau_{s}\tau_{s-\frac{1}{2}}(\tau_{s}+\tau_{s-\frac{1}{2}})\left(\bar{\gamma}.*\bar{\nabla}^{t}e^{s},
    \bar{\nabla}e^{s}\right)_{*,\cdot}=\underset{s=\frac{1}{2}}{\overset{n}\sum}\tau_{s}\tau_{s-\frac{1}{2}}(\tau_{s}+\tau_{s-\frac{1}{2}})
   [\left(F(w_{h}^{s})-F(w^{s}),e^{s}\right)_{\cdot}-<\bar{\gamma}.*\bar{\beta}.*e^{s},e^{s}>+
     \end{equation*}
   \begin{equation}\label{36}
   <\bar{\gamma}.*e^{s},e^{s}>_{\cdot}]-\frac{1}{6} \underset{s=\frac{1}{2}}{\overset{n}\sum}\tau_{s}^{2}\tau_{s-\frac{1}{2}}^{2}(\tau_{s}+
   \tau_{s-\frac{1}{2}})\left(w_{3t}(\theta^{s}),e^{s}\right)_{\cdot}.
     \end{equation}

    Utilizing the summation by parts, it is not hard to show that
    \begin{equation}\label{37}
    \underset{s=\frac{1}{2}}{\overset{n}\sum}\tau_{s-\frac{1}{2}}^{2}(\|e^{s+\frac{1}{2}}\|_{.}^{2}-\|e^{s}\|_{.}^{2})=
    \tau_{n-\frac{1}{2}}^{2}\|e^{n+\frac{1}{2}}\|_{.}^{2}-\tau_{0}^{2}\|e^{\frac{1}{2}}\|_{.}^{2}-
     \underset{s=0}{\overset{n-1}\sum}(\tau_{s+\frac{1}{2}}^{2}-\tau_{s}^{2})\|e^{s+1}\|_{.}^{2},
     \end{equation}
     \begin{equation}\label{38}
    \underset{s=\frac{1}{2}}{\overset{n}\sum}\tau_{s}^{2}(\|e^{s}\|_{.}^{2}-\|e^{s-\frac{1}{2}}\|_{.}^{2})=
    \tau_{n}^{2}\|e^{n}\|_{.}^{2}-\tau_{\frac{1}{2}}^{2}\|e^{0}\|_{.}^{2}-
     \underset{s=\frac{1}{2}}{\overset{n}\sum}(\tau_{s+\frac{1}{2}}^{2}-\tau_{s}^{2})\|e^{s}\|_{.}^{2},
     \end{equation}
       \begin{equation}\label{39}
    \underset{s=\frac{1}{2}}{\overset{n}\sum}(\tau_{s}^{2}\|e^{s}-e^{s-\frac{1}{2}}\|_{.}^{2}-\tau_{s-\frac{1}{2}}^{2}\|e^{s+\frac{1}{2}}-e^{s}\|_{.}^{2})=
    -\tau_{n-\frac{1}{2}}^{2}\|e^{n+\frac{1}{2}}-e^{n}\|_{.}^{2}+\tau_{\frac{1}{2}}^{2}\|e^{\frac{1}{2}}-e^{0}\|_{.}^{2}+
     \underset{s=\frac{1}{2}}{\overset{n-\frac{1}{2}}\sum}(\tau_{s+\frac{1}{2}}^{2}-\tau_{s-\frac{1}{2}}^{2})\|e^{s+\frac{1}{2}}-e^{s}\|_{.}^{2}.
       \end{equation}

   Since $\tau_{s+\frac{1}{2}}=\tau_{s}$, combining equations $(\ref{36})$-$(\ref{39})$, this provides
       \begin{equation*}
       \frac{1}{2}[\tau_{n-\frac{1}{2}}^{2}\|e^{n+\frac{1}{2}}\|_{.}^{2}+\tau_{n}^{2}\|e^{n}\|_{.}^{2}-\tau_{n-\frac{1}{2}}^{2}\|e^{n+\frac{1}{2}}-e^{n}\|_{.}^{2}
       +\underset{s=\frac{1}{2}}{\overset{n-\frac{1}{2}}\sum}(\tau_{s}^{2}-\tau_{s-\frac{1}{2}}^{2})\|e^{s+\frac{1}{2}}-e^{s}\|_{.}^{2}]+
       \end{equation*}
     \begin{equation*}
    \underset{s=\frac{1}{2}}{\overset{n}\sum}\tau_{s}\tau_{s-\frac{1}{2}}(\tau_{s}+\tau_{s-\frac{1}{2}})\left(\bar{\gamma}.*\bar{\nabla}^{t}e^{s},
    \bar{\nabla}e^{s}\right)_{*,\cdot} \leq \frac{\tau_{0}^{2}}{2}(\|e^{0}\|_{.}^{2}+\|e^{\frac{1}{2}}\|_{.}^{2})
    +\underset{s=\frac{1}{2}}{\overset{n}\sum}\tau_{s}\tau_{s-\frac{1}{2}}(\tau_{s}+\tau_{s-\frac{1}{2}})[\left(F(w_{h}^{s})-F(w^{s}),e^{s}\right)_{\cdot}-
     \end{equation*}
   \begin{equation}\label{40}
   <\bar{\gamma}.*\bar{\beta}.*e^{s},e^{s}>+<\bar{\gamma}.*e^{s},e^{s}>_{\cdot}]-\frac{1}{6} \underset{s=\frac{1}{2}}{\overset{n}\sum}\tau_{s}^{2}
    \tau_{s-\frac{1}{2}}^{2}(\tau_{s}+\tau_{s-\frac{1}{2}})\left(w_{3t}(\theta^{s}),e^{s}\right)_{\cdot}.
     \end{equation}

   In a similar manner, using equations $(\ref{22})$ and $(\ref{s2})$, and taking $z=e^{n+1}$, one easily shows that
        \begin{equation*}
     \frac{1}{2}[\frac{2}{3}\|e^{n+1}-e^{n+\frac{1}{2}}\|_{.}^{2}+\|e^{n+1}\|_{.}^{2}-\frac{1}{3}\|e^{n+\frac{1}{2}}\|_{.}^{2}]
       +\frac{2}{3}\underset{s=0}{\overset{n}\sum}\tau_{s}\left(\bar{\gamma}.*\bar{\nabla}^{t}e^{s+1},\bar{\nabla}e^{s+1}\right)_{*,\cdot}\leq
       \end{equation*}
     \begin{equation*}
     \frac{1}{2}(\|e^{\frac{1}{2}}-e^{0}\|_{.}^{2}+\|e^{\frac{1}{2}}\|_{.}^{2})+\frac{2}{3}\underset{s=0}{\overset{n}\sum}\tau_{s}\left[\left(2(F(w_{h}^{s+\frac{1}{2}})
   -F(w^{s+\frac{1}{2}}))-(F(w_{h}^{s})-F(w^{s})),e^{s+1}\right)_{\cdot}\right.-
     \end{equation*}
   \begin{equation}\label{41}
   \left.<\bar{\gamma}.*\bar{\beta}.*e^{s+1},e^{s+1}>+<\bar{\gamma}.*e^{s+1},e^{s+1}>_{\cdot}\right]+\frac{2}{3}\underset{s=\frac{1}{2}}{\overset{n}\sum}\tau_{s}^{3}
   \left(G^{s},e^{s+1}\right)_{\cdot}-\frac{2}{9}\underset{s=0}{\overset{n}\sum}\tau_{s}^{3}\left(w_{3t}(\theta^{s+1}),e^{s+1}\right)_{\cdot}.
     \end{equation}

    Plugging estimates $(\ref{40})$ and $(\ref{41})$, and using the fact that the sequence of local time steps $\{\tau_{s},\text{\,}s=0,\frac{1}{2},...,N\}$ is nondecreasing, straightforward computations result in
       \begin{equation*}
    \frac{3}{2}\|e^{n+1}\|_{.}^{2}+\frac{1}{2}\|e^{n+\frac{1}{2}}\|_{.}^{2}+\|e^{n}\|_{.}^{2}+\|e^{n+1}-e^{n+\frac{1}{2}}\|_{.}^{2}-\|e^{n+\frac{1}{2}}-e^{n}\|_{.}^{2}
       +\underset{s=\frac{1}{2}}{\overset{n-\frac{1}{2}}\sum}\frac{\tau_{s}^{2}-\tau_{s-\frac{1}{2}}^{2}}{\tau_{n-\frac{1}{2}}^{2}}\|e^{s+\frac{1}{2}}-e^{s}\|_{.}^{2}+
       \end{equation*}
   \begin{equation*}
    \tau_{0}\left[\underset{s=\frac{1}{2}}{\overset{n}\sum}\left(\bar{\gamma}.*\bar{\nabla}^{t}e^{s},\bar{\nabla}e^{s}\right)_{*,\cdot}+
     \underset{s=0}{\overset{n}\sum}\left(\bar{\gamma}.*\bar{\nabla}^{t}e^{s+1},\bar{\nabla}e^{s+1}\right)_{*,\cdot}\right]\leq \frac{1}{2}(2\|e^{0}\|_{.}^{2}+5\|e^{\frac{1}{2}}\|_{.}^{2}+3\|e^{\frac{1}{2}}-e^{0}\|_{.}^{2})+2\underset{s=0}{\overset{n}\sum}\tau_{s}\times
       \end{equation*}
     \begin{equation*}
   \left[\left(2(F(w_{h}^{s+\frac{1}{2}})-F(w^{s+\frac{1}{2}}))-(F(w_{h}^{s})-F(w^{s})),e^{s+1}\right)_{\cdot}-
   <\bar{\gamma}.*\bar{\beta}.*e^{s+1},e^{s+1}>+<\bar{\gamma}.*e^{s+1},e^{s+1}>_{\cdot}\right]
     \end{equation*}
     \begin{equation*}
   +2\underset{s=\frac{1}{2}}{\overset{n}\sum}\frac{\tau_{s}\tau_{s-\frac{1}{2}}(\tau_{s}+\tau_{s-\frac{1}{2}})}
   {\tau_{n-\frac{1}{2}}^{2}}\left[\left(F(w_{h}^{s})-F(w^{s}),e^{s}\right)_{\cdot}-<\bar{\gamma}.*\bar{\beta}.*e^{s},e^{s}>+<\bar{\gamma}.*e^{s},e^{s}>_{\cdot}\right]
     \end{equation*}
   \begin{equation}\label{42}
    +2\underset{s=0}{\overset{n}\sum}\tau_{s}^{3}\left(G^{s}-\frac{1}{3}w_{3t}(\theta^{s+1}),e^{s+1}\right)_{\cdot}-
   \frac{2\tau_{0}^{3}}{3}\left(w_{3t}(\theta^{1}),e^{1}\right)_{\cdot}+\frac{1}{3}\underset{s=\frac{1}{2}}{\overset{n}\sum}\frac{\tau_{s}^{2}\tau_{s-\frac{1}{2}}^{2}(\tau_{s}+\tau_{s-\frac{1}{2}})}
   {\tau_{n-\frac{1}{2}}^{2}}\left(w_{3t}(\theta^{s}),e^{s}\right)_{\cdot}.
     \end{equation}
   Indeed: since $\bar{\gamma}=(\gamma_{1},\gamma_{2})^{t}$, where $\gamma_{j}\geq0$ ($j=1,2$), it is not difficult to see that $\left(\bar{\gamma}.*\bar{\nabla}^{t}e^{r},\bar{\nabla}e^{r}\right)_{*,\cdot}\geq0$, for $r=s,s+1$. If the nonnegative "generalized sequence" $\{\|e^{s+1}-e^{s+\frac{1}{2}}\|,\text{\,}s=0,\frac{1}{2},...,N-1\}$, is increasing then $\|e^{n+1}-e^{n+\frac{1}{2}}\|_{.}^{2}-\|e^{n+\frac{1}{2}}-e^{n}\|_{.}^{2}\geq0$, if this sequence is decreasing it holds $\|e^{s+1}-e^{s+\frac{1}{2}}\|_{.}\leq\|e^{\frac{1}{2}}-e^{0}\|_{.}$, for $s=0,\frac{1}{2},...,N-1$. Otherwise, there is a subsequence of the "generalized sequence" that is either increasing or decreasing. Let denote by $\{\|e^{s+1}-e^{s+\frac{1}{2}}\|,\text{\,}s=0,\frac{1}{2},...,N-1\}$ be such a subsequence. However, $\|e^{n+1}-e^{n+\frac{1}{2}}\|_{.}^{2}-\|e^{n+\frac{1}{2}}-e^{n}\|_{.}^{2}\geq0$ or $\|e^{n+1}-e^{n+\frac{1}{2}}\|_{.}^{2}-\|e^{n+\frac{1}{2}}-e^{n}\|_{.}^{2}\geq \|e^{n+1}-e^{n+\frac{1}{2}}\|_{.}^{2}-\|e^{\frac{1}{2}}-e^{0}\|_{.}^{2}$, for any $n=0,1,...,N-1$. Indeed, both estimates are due to the fact that if the "generalized sequence" is neither increasing nor decreasing, one should consider a monotone subsequence of this sequence. Using these facts, estimate $(\ref{42})$ implies
    \begin{equation*}
    \frac{3}{2}\|e^{n+1}\|_{.}^{2}+\frac{1}{2}\|e^{n+\frac{1}{2}}\|_{.}^{2}+\|e^{n}\|_{.}^{2}+\underset{s=\frac{1}{2}}{\overset{n-\frac{1}{2}}\sum}
     \frac{\tau_{s}^{2}-\tau_{s-\frac{1}{2}}^{2}}{\tau_{n-\frac{1}{2}}^{2}}\|e^{s+\frac{1}{2}}-e^{s}\|_{.}^{2}+
    \tau_{0}\left[\underset{s=\frac{1}{2}}{\overset{n}\sum}\left(\bar{\gamma}.*\bar{\nabla}^{t}e^{s},\bar{\nabla}e^{s}\right)_{*,\cdot}+\right.
       \end{equation*}
   \begin{equation*}
    \left.\underset{s=0}{\overset{n}\sum}\left(\bar{\gamma}.*\bar{\nabla}^{t}e^{s+1},\bar{\nabla}e^{s+1}\right)_{*,\cdot}\right]\leq \|e^{0}\|_{.}^{2}+\frac{5}{2}\|e^{\frac{1}{2}}\|_{.}^{2}+\frac{5}{2}\|e^{\frac{1}{2}}-e^{0}\|_{.}^{2}+4\underset{s=0}{\overset{n}\sum}\tau_{s}
     \left(F(w_{h}^{s+\frac{1}{2}})-F(w^{s+\frac{1}{2}}),e^{s+1}\right)_{\cdot}
       \end{equation*}
     \begin{equation*}
   -2\underset{s=0}{\overset{n}\sum}\tau_{s}\left[\left(F(w_{h}^{s})-F(w^{s}),e^{s+1}\right)_{\cdot}+<\bar{\gamma}.*\bar{\beta}.*e^{s+1},e^{s+1}>-
    <\bar{\gamma}.*e^{s+1},e^{s+1}>_{\cdot}\right]
     \end{equation*}
     \begin{equation*}
   +2\underset{s=\frac{1}{2}}{\overset{n}\sum}\frac{\tau_{s}\tau_{s-\frac{1}{2}}(\tau_{s}+\tau_{s-\frac{1}{2}})}
   {\tau_{n-\frac{1}{2}}^{2}}\left[\left(F(w_{h}^{s})-F(w^{s}),e^{s}\right)_{\cdot}-<\bar{\gamma}.*\bar{\beta}.*e^{s},e^{s}>+<\bar{\gamma}.*e^{s},e^{s}>_{\cdot}\right]
     \end{equation*}
   \begin{equation}\label{43}
    +2\underset{s=0}{\overset{n}\sum}\tau_{s}^{3}\left(G^{s}-\frac{1}{3}w_{3t}(\theta^{s+1}),e^{s+1}\right)_{\cdot}-
   \frac{2\tau_{0}^{3}}{3}\left(w_{3t}(\theta^{1}),e^{1}\right)_{\cdot}+\frac{1}{3}\underset{s=\frac{1}{2}}{\overset{n}\sum}\frac{\tau_{s}^{2}
   \tau_{s-\frac{1}{2}}^{2}(\tau_{s}+\tau_{s-\frac{1}{2}})}{\tau_{n-\frac{1}{2}}^{2}}\left(w_{3t}(\theta^{s}),e^{s}\right)_{\cdot}.
     \end{equation}

    Combining Lemma $\ref{l4}$ and Cauchy-Schwarz inequality, direct calculations yield
  \begin{equation}\label{44}
   \left(2(F(w_{h}^{s+\frac{1}{2}})-F(w^{s+\frac{1}{2}}))-(F(w_{h}^{s})-F(w^{s})),e^{s+1}\right)_{\cdot}\leq C_{F}(\frac{3}{2}\|e^{s+1}\|_{.}^{2}+
   \|e^{s+\frac{1}{2}}\|_{.}^{2}+\frac{1}{2}\|e^{s}\|_{.}^{2}),
     \end{equation}
   \begin{equation}\label{45}
   \left(F(w_{h}^{s})-F(w^{s}),e^{s}\right)_{\cdot}\leq C_{F}\|e^{s}\|_{.}^{2},
     \end{equation}
    \begin{equation}\label{46}
   2\tau_{s}^{3}\left(G^{s}-\frac{1}{3}w_{3t}(\theta^{s+1}),e^{s+1}\right)_{\cdot}\leq 2\tau_{s}^{3}\|e^{s+1}\|_{.}\|G^{s}-\frac{1}{3}w_{3t}(\theta^{s+1})\|_{.}\leq
   \tau_{s}\|e^{s+1}\|_{.}^{2}+\tau_{s}^{5}\|G^{s}-\frac{1}{3}w_{3t}(\theta^{s+1})\|_{.}^{2},
     \end{equation}
   \begin{equation}\label{47}
   -\frac{2}{3}\tau_{0}^{3}\left(w_{3t}(\theta^{1}),e^{1}\right)_{\cdot}\leq \frac{2}{3}\tau_{0}^{3}\|e^{1}\|_{.}\|w_{3t}(\theta^{1})\|_{.}\leq
   \tau_{0}\|e^{1}\|_{.}^{2}+\frac{\tau_{0}^{5}}{9}\|w_{3t}(\theta^{1})\|_{.}^{2},
     \end{equation}
   \begin{equation}\label{48}
   \frac{1}{3}\tau_{s}^{2}\tau_{s-\frac{1}{2}}^{2}\tau_{n-\frac{1}{2}}^{-2}(\tau_{s}+\tau_{s-\frac{1}{2}})\left(w_{3t}(\theta^{s}),e^{s}\right)_{\cdot}\leq \frac{2}{3}\tau_{s}^{3}\|e^{s}\|_{.}\|w_{3t}(\theta^{s})\|_{.}\leq \tau_{s}\|e^{s}\|_{.}^{2}+\frac{\tau_{s}^{5}}{9}\|w_{3t}(\theta^{s})\|_{.}^{2}.
     \end{equation}

   Utilizing Lemma $\ref{l5}$, direct computations results in

    \begin{equation*}
   -2\tau_{s}<\bar{\gamma}.*\bar{\beta}.*e^{s+1},e^{s+1}>\leq 2\tau_{s}|<\bar{\beta}.*e^{s+1},\bar{\gamma}.*e^{s+1}>|\leq 8\hat{C}_{1}\tau_{s}
    \|\bar{\gamma}.*e^{s+1}\|_{.}^{\frac{1}{2}}\|\bar{\nabla}^{t}(\bar{\gamma}.*e^{s+1})\|_{*,.}^{\frac{1}{2}}
     \end{equation*}
     \begin{equation*}
   \|\bar{\beta}.*\bar{\beta}.*e^{s+1}\|_{.}^{\frac{1}{2}}\|\bar{\nabla}^{t}(\bar{\beta}.*\bar{\beta}.*e^{s+1})\|_{*,.}^{\frac{1}{2}}\leq 8(\underset{1\leq j\leq2}{\max}\gamma_{j}\underset{1\leq j\leq2}{\max}\beta_{j}^{2})\hat{C}_{1}\tau_{s}\|e^{s+1}\|_{.}\|\bar{\nabla}^{t}e^{s+1}\|_{*,.}\leq
     \end{equation*}
   \begin{equation}\label{49}
   \tau_{0}\underset{1\leq j\leq2} {\min}\gamma_{j}\|\bar{\nabla}^{t}e^{s+1}\|_{*,.}^{2}+ 16\hat{C}_{1}^{2}\tau_{s}^{2}\tau_{0}^{-1}(\underset{1\leq j\leq2} {\min}\gamma_{j})^{-1}(\underset{1\leq j\leq2}{\max}\gamma_{j})^{2}(\underset{1\leq j\leq2}{\max}\beta_{j})^{4}\|e^{s+1}\|_{.}^{2},
     \end{equation}

  \begin{equation*}
   -2\frac{\tau_{s}\tau_{s-\frac{1}{2}}(\tau_{s}+\tau_{s-\frac{1}{2}})}{\tau_{n-\frac{1}{2}}^{2}}<\bar{\gamma}.*\bar{\beta}.*e^{s},e^{s}>\leq 4\frac{\tau_{s}^{2}\tau_{s-\frac{1}{2}}}{\tau_{n-\frac{1}{2}}^{2}}|<\bar{\beta}.*e^{s},\bar{\gamma}.*e^{s}>|\leq 16\hat{C}_{1}\tau_{s}
    \|\bar{\gamma}.*e^{s}\|_{.}^{\frac{1}{2}}\|\bar{\nabla}^{t}(\bar{\gamma}.*e^{s})\|_{*,.}^{\frac{1}{2}}
     \end{equation*}
   \begin{equation}\label{50}
\|\bar{\beta}.*\bar{\beta}.*e^{s}\|_{.}^{\frac{1}{2}}\|\bar{\nabla}^{t}(\bar{\beta}.*\bar{\beta}.*e^{s})\|_{*,.}^{\frac{1}{2}}\leq
   \tau_{0}\underset{1\leq j\leq2}{\min}\gamma_{j}\|\bar{\nabla}^{t}e^{s}\|_{*,.}^{2}+ 64\hat{C}_{1}^{2}\tau_{s}^{2}\tau_{0}^{-1}(\underset{1\leq j\leq2} {\min}\gamma_{j})^{-1}(\underset{1\leq j\leq2}{\max}\gamma_{j})^{2}(\underset{1\leq j\leq2}{\max}\beta_{j})^{4}\|e^{s}\|_{.}^{2},
     \end{equation}
   \begin{equation}\label{51}
    \|e^{\frac{1}{2}}-e^{0}\|_{.}^{2}\leq2(\|e^{\frac{1}{2}}\|_{.}^{2}+\|e^{0}\|_{.}^{2}),\text{\,\,}<\bar{\gamma}.e^{r},e^{r}>_{.}=0,\text{\,\,\,}
    \left(\bar{\gamma}.\bar{\nabla}^{t}e^{r},\bar{\nabla}^{t}e^{r}\right)_{*,.}\geq(\underset{1\leq j\leq2} {\min}\gamma_{j})\|\bar{\nabla}^{t}e^{r}\|_{*,.}^{2},\text{\,\,for\,\,}r=s,s+1.
     \end{equation}

   Substituting estimates $(\ref{44})$-$(\ref{51})$ into equation $(\ref{43})$ and rearranging terms, this yields
     \begin{equation*}
    \frac{3}{2}\|e^{n+1}\|_{.}^{2}+\frac{1}{2}\|e^{n+\frac{1}{2}}\|_{.}^{2}+\|e^{n}\|_{.}^{2}+\underset{s=\frac{1}{2}}{\overset{n-\frac{1}{2}}\sum}
     \frac{\tau_{s}^{2}-\tau_{s-\frac{1}{2}}^{2}}{\tau_{n-\frac{1}{2}}^{2}}\|e^{s+\frac{1}{2}}-e^{s}\|_{.}^{2}\leq 6\|e^{0}\|_{.}^{2}+\frac{15}{2}\|e^{\frac{1}{2}}\|_{.}^{2}+\underset{s=0}{\overset{n}\sum}\tau_{s}\left[\left(2+3C_{F}+\right.\right.
    \end{equation*}
     \begin{equation*}
   \left.\left. 16\hat{C}_{1}^{2}\tau_{s}^{2}\tau_{0}^{-1}(\underset{1\leq j\leq2}{\min}\gamma_{j})^{-1}(\underset{1\leq j\leq2}{\max}\gamma_{j})^{2}(\underset{1\leq j\leq2}{\max}\beta_{j})^{4}\right)\|e^{s+1}\|_{.}^{2}+2C_{F}\|e^{s+\frac{1}{2}}\|_{.}^{2}+\left(1+5C_{F}+\right.\right.
     \end{equation*}
     \begin{equation*}
   \left.\left.64\hat{C}_{1}^{2}\tau_{s}^{2}\tau_{0}^{-1}(\underset{1\leq j\leq2}{\min}\gamma_{j})^{-1}(\underset{1\leq j\leq2}{\max}\gamma_{j})^{2}(\underset{1\leq j\leq2}{\max}\beta_{j})^{4}\right)\|e^{s}\|_{.}^{2}\right]+\underset{s=0}{\overset{n}\sum}\tau_{s}^{5}\left(\|G^{s}-\frac{1}{3}w_{3t}(\theta^{s+1})\|_{.}^{2}+
    \frac{1}{9}\|w_{3t}(\theta^{s})\|_{.}^{2}\right).
     \end{equation*}

   Since $\tau_{s}\leq\tau_{n}$ and $\tau_{n}\tau_{0}^{-1}\leq\hat{C}$, for $s=0,\frac{1}{2},...,n$, this inequality implies
     \begin{equation*}
   \|e^{n+1}\|_{.}^{2}+\|e^{n+\frac{1}{2}}\|_{.}^{2}+\|e^{n}\|_{.}^{2}+2\underset{s=\frac{1}{2}}{\overset{n-\frac{1}{2}}\sum}
     \frac{\tau_{s}^{2}-\tau_{s-\frac{1}{2}}^{2}}{\tau_{n-\frac{1}{2}}^{2}}\|e^{s+\frac{1}{2}}-e^{s}\|_{.}^{2}\leq 12\|e^{0}\|_{.}^{2}+15\|e^{\frac{1}{2}}\|_{.}^{2}+2\left[2+5C_{F}+\right.
    \end{equation*}
     \begin{equation*}
    64\hat{C}\hat{C}_{1}^{2}(\underset{1\leq j\leq2}{\min}\gamma_{j})^{-1}(\underset{1\leq j\leq2}{\max}\gamma_{j})^{2}(\underset{1\leq j\leq2}{\max}\beta_{j})^{4}]\underset{s=0}{\overset{n}\sum}\tau_{s}(\|e^{s+1}\|_{.}^{2}+\|e^{s+\frac{1}{2}}\|_{.}^{2}+\|e^{s}\|_{.}^{2})+
     \end{equation*}
     \begin{equation*}
   2\underset{s=0}{\overset{n}\sum}\tau_{s}^{5}\left(\|G^{s}-\frac{1}{3}w_{3t}(\theta^{s+1})\|_{.}^{2}+\frac{1}{9}\|w_{3t}(\theta^{s})\|_{.}^{2}\right).
     \end{equation*}

   The application of the Gronwall inequality gives
      \begin{equation*}
   \|e^{n+1}\|_{.}^{2}+\|e^{n+\frac{1}{2}}\|_{.}^{2}+\|e^{n}\|_{.}^{2}+2\underset{s=\frac{1}{2}}{\overset{n-\frac{1}{2}}\sum}
     \frac{\tau_{s}^{2}-\tau_{s-\frac{1}{2}}^{2}}{\tau_{n-\frac{1}{2}}^{2}}\|e^{s+\frac{1}{2}}-e^{s}\|_{.}^{2}\leq 2\left[6\|e^{0}\|_{.}^{2}+\frac{15}{2}\|e^{\frac{1}{2}}\|_{.}^{2}+
     \underset{s=0}{\overset{n}\sum}\tau_{s}^{5}\left(\frac{1}{9}\|w_{3t}(\theta^{s})\|_{.}^{2} \right.\right.
    \end{equation*}
     \begin{equation}\label{52}
    \left.\left. +\|G^{s}-\frac{1}{3}w_{3t}(\theta^{s+1})\|_{.}^{2}\right)\right]\exp\left((4+10C_{F}+128\hat{C}\hat{C}_{1}^{2}(\underset{1\leq j\leq2}{\min}\gamma_{j})^{-1}(\underset{1\leq j\leq2}{\max}\gamma_{j})^{2}(\underset{1\leq j\leq2}{\max}\beta_{j})^{4})\underset{s=0}{\overset{n}\sum}\tau_{s}\right).
     \end{equation}

   Using the initial conditions $(\ref{s3})$, Lemma $\ref{l3}$, and equations $(\ref{25})$-$(\ref{27})$, it is not difficult to observe that
     \begin{equation}\label{53}
   \|e^{0}\|_{.}^{2}=\|w_{h}^{0}-w_{0}\|_{.}^{2}=\|P_{h}w_{0}-w_{0}\|_{.}^{2}\leq C_{0}(p+2)L\underset{1\leq l\leq L}{\max}\{c_{l}\}h^{2m}\|w_{0}\|_{[H^{m}(\Omega)]^{2}}^{2},
    \end{equation}
     \begin{equation}\label{53a}
   \|e^{\frac{1}{2}}\|_{.}^{2}\leq 2(\|P_{h}\widetilde{w}^{\frac{1}{2}}-\widetilde{w}^{\frac{1}{2}}\|_{.}^{2}+\|\widetilde{w}^{\frac{1}{2}}-w^{\frac{1}{2}}\|_{.}^{2})
    \leq 4\tau_{0}^{4}\|w_{2t}(\theta^{0})+w_{2t}(\theta^{\frac{1}{2}})\|_{.}^{2}+C_{0}(p+2)L\underset{1\leq l\leq L}{\max}\{c_{l}\}h^{2m}\|\widetilde{w}^{\frac{1}{2}}\|_{[H^{m}(\Omega)]^{2}}^{2},
     \end{equation}

   Because $\underset{s=0}{\overset{n}\sum}\tau_{s}\leq\underset{s=0}{\overset{N}\sum}\tau_{s}=T$ and $\tau_{s}\leq\tau_{n}$, for $s=0,\frac{1}{2},...,n$, using this fact together with estimates $(\ref{52})$-$(\ref{53a})$, and rearranging terms, result in
      \begin{equation*}
   \|e^{n+1}\|_{.}^{2}+\|e^{n+\frac{1}{2}}\|_{.}^{2}+\|e^{n}\|_{.}^{2}+2\underset{s=\frac{1}{2}}{\overset{n-\frac{1}{2}}\sum}
     \frac{\tau_{s}^{2}-\tau_{s-\frac{1}{2}}^{2}}{\tau_{n-\frac{1}{2}}^{2}}\|e^{s+\frac{1}{2}}-e^{s}\|_{.}^{2}\leq \left\{C_{0}(p+2)L\underset{1\leq l\leq L}{\max}\{c_{l}\}\left(12\|w_{0}\|_{[H^{m}(\Omega)]^{2}}^{2}+\right.\right.
    \end{equation*}
    \begin{equation*}
   \left.\left.15\|\widetilde{w}^{\frac{1}{2}}\|_{[H^{m}(\Omega)]^{2}}^{2}\right)h^{2m}+2\left(60\||w_{2t}|\|_{.,\infty}^{2}+
   T(\frac{1}{9}\||w_{3t}|\|_{.,\infty}^{2}+\||G-\frac{1}{3}w_{3t}|\|_{.,\infty}^{2})\right)\tau_{n}^{4}\right]\times
    \end{equation*}
     \begin{equation}\label{54}
    \exp\left((4+10C_{F}+128\hat{C}\hat{C}_{1}^{2}(\underset{1\leq j\leq2}{\min}\gamma_{j})^{-1}(\underset{1\leq j\leq2}{\max}\gamma_{j})^{2}(\underset{1\leq j\leq2}{\max}\beta_{j})^{4})T\right),
     \end{equation}
   where the norm $\||\cdot|\|_{.,\infty}$, is defined as: $\||U|\|_{.,\infty}=\underset{0\leq t\leq T}{\sup}\|U(t)\|_{.}$, for every $U\in L^{\infty}(0,T;\text{\,}[L^{2}(\Omega)]^{2})$. We remind that the exact solution $w\in H^{4}(0,T;\text{\,}[H^{m}(\Omega)]^{2}\cap[L_{\sup}^{2}(\Omega)]^{2})$, the vector-valued function $G$ is given by equation  $(\ref{22b})$, and $p$ is the nonnegative integer provided by the assumptions satisfied by the triangulation $\mathcal{F}_{h}$. Since $|\|w_{h}^{s}\|_{.}-\|w^{s}\|_{.}|\leq \|w_{h}^{s}-w^{s}\|_{.}=\|e^{s}\|_{.}$, for $s=n,n+\frac{1}{2},n+1$, direct calculations show that estimate $(\ref{54})$ implies
      \begin{equation*}
   \|w_{h}^{n+1}\|_{.}^{2}+\|w_{h}^{n+\frac{1}{2}}\|_{.}^{2}+\|w_{h}^{n}\|_{.}^{2}+6\underset{s=\frac{1}{2}}{\overset{n-\frac{1}{2}}\sum}
     \frac{\tau_{s}^{2}-\tau_{s-\frac{1}{2}}^{2}}{\tau_{n-\frac{1}{2}}^{2}}\|e^{s+\frac{1}{2}}-e^{s}\|_{.}^{2}\leq
   2(\|w^{n+1}\|_{.}^{2}+\|w^{n+\frac{1}{2}}\|_{.}^{2}+\|w^{n}\|_{.}^{2})+
    \end{equation*}
      \begin{equation*}
   6\left\{C_{0}(p+2)L\underset{1\leq l\leq L}{\max}\{c_{l}\}\left(12\|w_{0}\|_{[H^{m}(\Omega)]^{2}}^{2}+
   15\|\widetilde{w}^{\frac{1}{2}}\|_{[H^{m}(\Omega)]^{2}}^{2}\right)h^{2m}
   +2\left(T(\frac{1}{9}\||w_{3t}|\|_{.,\infty}^{2}+\||G-\frac{1}{3}w_{3t}|\|_{.,\infty}^{2})+\right.\right.
    \end{equation*}
    \begin{equation*}
   \left.\left.60\||w_{2t}|\|_{.,\infty}^{2}\right)\tau_{n}^{4}\right\}\exp\left((4+10C_{F}+128\hat{C}\hat{C}_{1}^{2}(\underset{1\leq j\leq2}{\min}\gamma_{j})^{-1}
   (\underset{1\leq j\leq2}{\max}\gamma_{j})^{2}(\underset{1\leq j\leq2}{\max}\beta_{j})^{4})T\right).
     \end{equation*}

   This completes the proof of Theorem $\ref{t1}$.
       \end{proof}

   \section{Numerical experiments}\label{sec4}
    We carry out some computational examples to confirm the theoretical results established in Theorem $\ref{t1}$, for solving the FitzHugh-Nagumo model $(\ref{1})$-$(\ref{3})$, and to demonstrate the accuracy of the proposed numerical technique $(\ref{s1})$-$(\ref{s4})$. Suppose $m=4$, $p=6$, and let $\tau_{N}\in\{2^{-l},\text{\,\,}l=4,5,...,8\}$ be the local time steps and $h=2^{-l}$, for $l=2,3,..,6$, be the space size. The following $L^{\infty}(0,T;[L^{2}(\Omega)]^{2})$-norm is used for computing the errors
     \begin{equation*}
       \||e(h,\tau_{N})|\|_{.,\infty}=\underset{0\leq n\leq N}{\max}\|w^{n}_{h}-w^{n}\|_{.},
       \end{equation*}
       where $w_{h}^{n}=(u_{h}^{n},v_{h}^{n})^{t}\in \mathcal{W}_{h}$, is the approximate solution provided by the new algorithm $(\ref{s1})$-$(\ref{s4})$ at time level $n$, whereas $w\in H^{4}(0,T;[H^{4}(\Omega)]^{2}\cap[L_{\sup}^{2}(\Omega)]^{2})$ denotes the analytical solution of the initial-boundary value problem $(\ref{1})$-$(\ref{3})$. The convergence order $CO(h,\tau_{N})$ of the constructed strategy is estimated using the formula
       \begin{equation*}
        CO(h,\tau_{N})=\log_{2}\left(\frac{\||e(2h,\tau_{N})|\|_{.,\infty}}{\||e(h,\tau_{N})|\|_{.,\infty}}\right),\text{\,\,\,\,and\,\,\,\,}CO(h,\tau_{N})=
        \log_{2}\left(\frac{\||e(h,2\tau_{N})|\|_{.,\infty}}{\||e(h,\tau_{N})|\|_{.,\infty}}\right).
       \end{equation*}
        Finally, the numerical computations are performed using MATLAB R$2007b$.\\

      $\bullet$ \textbf{Example $1$}. Suppose that $\Omega=(0,\pi)\times(0,\pi)$, is the fluid region and let $T=1$, be the final time. We consider the following FitzHugh-Nagumo system defined in \cite{24wcqh} by
     \begin{equation*}
      \left\{
        \begin{array}{ll}
          u_{t}-\Delta u=u(1-u)(u-0.5)-v+f_{1}(x,y,t,u), & \hbox{on $\Omega\times[0,\text{\,}1]$} \\
          v_{t}=u-v+f_{2}(x,y,t), & \hbox{on $\Omega\times[0,\text{\,}1]$}
        \end{array}
      \right.
     \end{equation*}
     whose the exact solution $w=(u,v)^{t}$ is defined as
      \begin{equation*}
     u(x,y,t)=(t^{3}+1)\sin(2x)\sin(2y),\text{\,\,\,\,\,}v(x,y,t)=(t^{3}+1)\sin(x)\sin(y).
     \end{equation*}
    The initial and boundary conditions are determined from the analytical solution, while the source terms $f_{k}$, for $k=1,2$, are given by
\begin{eqnarray*}
  f_{1}(x,y,t,u) &=& (8t^{3}+3t^{2}+8)\sin(2x)\sin(2y)-u(1-u)(u-0.5)+(t^{3}+1)\sin(x)\sin(y), \\
  f_{2}(x,y,t) &=& (t^{3}+3t^{2}+1)\sin(x)\sin(y)-(t^{3}+1)\sin(2x)\sin(2y).
\end{eqnarray*}

       \textbf{Table 1} $\label{T1}$. Stability and convergence order $CO(h,\tau_{N})$ of the new predictor-corrector approach with orthogonal spline collocation finite element scheme with varying space step $h$ and time step $\tau_{N}$.
          \begin{equation*}
          \begin{array}{c}
          \text{\,developed scheme:\,\,}\tau_{N}=2^{-6}\\
           \begin{tabular}{|c|c|c|c|c|c|}
            \hline
            $h$  &  $\||e_{u}(h,\tau_{N})|\|_{.,\infty}$ & $CO(h,\tau_{N})$ & $\||e_{v}(h,\tau_{N})|\|_{.,\infty}$ & $CO(h,\tau_{N})$ & CPU (s) \\
             \hline
            $2^{-2}$ & $1.3570\times10^{-3}$ & --- & $1.4291\times10^{-3}$ &  ---    & $3.3011$\\
            \hline
            $2^{-3}$ & $7.7876\times10^{-5}$ & 4.1231 & $8.2076\times10^{-5}$ &  4.1220 & $6.1565$\\
            \hline
            $2^{-4}$ & $4.4621\times10^{-6}$ & 4.1254 & $4.7259\times10^{-6}$ &  4.1183 & $14.4063$\\
            \hline
            $2^{-5}$ & $2.4265\times10^{-7}$ & 4.2008 & $2.5520\times10^{-7}$ &  4.2109 & $31.5426$\\
            \hline
            $2^{-6}$ & $1.2825\times10^{-8}$ & 4.2419 & $1.3500\times10^{-8}$&  4.2406 & $73.3996$\\
           \hline
          \end{tabular}
           \end{array}
            \end{equation*}

          \begin{equation*}
          \begin{array}{c}
          \text{\,developed scheme:\,\,}h=2^{-4}\\
         \begin{tabular}{|c|c|c|c|c|c|}
            \hline
            $\tau_{N}$ &  $\||e_{u}(h,\tau_{N})|\|_{.,\infty}$ & $CO(h,\tau_{N})$ & $\||e_{v}(h,\tau_{N})|\|_{.,\infty}$ & $CO(h,\tau_{N})$ & CPU (s) \\
             \hline
            $2^{-4}$ & $2.8871\times10^{-3}$ & --- & $3.0089\times10^{-3}$ &   --    & $2.7194$\\
            \hline
            $2^{-5}$ & $7.2629\times10^{-4}$ & 1.9910 & $7.5814\times10^{-4}$ & 1.9887  & $6.1072$\\
            \hline
            $2^{-6}$ & $1.7998\times10^{-4}$ & 2.0127 & $1.8908\times10^{-4}$& 2.0035  & $14.4063$\\
            \hline
            $2^{-7}$ & $4.3856\times10^{-5}$ & 2.0370 &  $4.6908\times10^{-5}$ & 2.0111 & $33.9528$\\
            \hline
            $2^{-8}$ & $1.0012\times10^{-5}$ & 2.1310 & $1.0860\times10^{-5}$ & 2.1108 & $82.6954$\\
           \hline
          \end{tabular}
          \end{array}
            \end{equation*}
          \text{\,}\\
          \text{\,}\\
          $\bullet$ \textbf{Example $2$}. Let $\Omega=(-1,\text{\,}1)^{2}$, be the fluid region and $[0,T]=[0,2]$, be the time interval. Consider the initial-boundary value problem given in \cite{8dm} by
       \begin{equation*}
      \left\{
        \begin{array}{ll}
          u_{t}=u(1-u)(u-0.5)-v+f_{1}(x,y,t), & \hbox{on $\Omega\times[0,\text{\,}2]$} \\
          v_{t}-\Delta v=u-v+f_{2}(x,y,t). & \hbox{on $\Omega\times[0,\text{\,}2]$}
        \end{array}
      \right.
     \end{equation*}
     The analytical solution $w=(u,v)^{t}$ is defined as
      \begin{equation*}
     u(x,y,t)=\exp(t)\cos(\pi x)\cos(3\pi y),\text{\,\,\,\,\,}v(x,y,t)=\exp(2t)\cos(2\pi x)\cos(4\pi y).
     \end{equation*}
    The initial and boundary conditions together with the source terms $f_{k}$, for $k=1,2$, are directly obtained from the analytical solution.\\

         \textbf{Table 2} $\label{T2}$. Stability and convergence order $CO(h,\tau_{N})$ of the new predictor-corrector approach with orthogonal spline collocation finite element scheme with varying space step $h$ and time step $\tau_{N}$.
               \begin{equation*}
          \begin{array}{c }
          \text{\,developed scheme:\,\,}\tau_{N}=2^{-6}\\
           \begin{tabular}{|c|c|c|c|c|c|}
            \hline
            $h$ &   $\||e_{u}(h,\tau_{N})|\|_{.,\infty}$ & $CO(h,\tau_{N})$ & $\||e_{v}(h,\tau_{N})|\|_{.,\infty}$ & $CO(h,\tau_{N})$ & CPU (s) \\
             \hline
            $2^{-2}$ & $6.6520\times10^{-4}$ & --- & $7.2415\times10^{-4}$ &   ---      & $2.8854$\\
            \hline
            $2^{-3}$ & $4.1196\times10^{-5}$ & 4.0132 & $4.5344\times10^{-5}$ &  3.9973 & $6.1915$\\
            \hline
            $2^{-4}$ & $2.4008\times10^{-6}$ & 4.1009 & $2.5144\times10^{-6}$ &  4.1726 & $12.3719$\\
            \hline
            $2^{-5}$ & $1.4997\times10^{-7}$ & 4.0008 & $1.4664\times10^{-7}$ &  4.0999 & $29.0195$\\
            \hline
            $2^{-6}$ & $8.5323\times10^{-9}$ & 4.1356 & $8.0793\times10^{-9}$&  4.1819  & $72.2383$\\
           \hline
          \end{tabular}
          \end{array}
            \end{equation*}
          \begin{equation*}
          \begin{array}{c }
          \text{\,developed scheme:\,\,}h=2^{-4}\\
           \begin{tabular}{|c|c|c|c|c|c|}
            \hline
            $\tau_{N}$ &  $\||e_{u}(h,\tau_{N})|\|_{.,\infty}$ & $CO(h,\tau_{N})$ & $\||e_{v}(h,\tau_{N})|\|_{.,\infty}$ & $CO(h,\tau_{N})$ & CPU (s) \\
             \hline
            $2^{-4}$ & $4.2187\times10^{-3}$ & --- & $2.5403\times10^{-3}$ &   --    & $2.9221$\\
            \hline
            $2^{-5}$ & $1.0566\times10^{-3}$ & 1.9974 & $6.3464\times10^{-4}$ & 2.0010  & $6.2500$\\
            \hline
            $2^{-6}$ & $2.6406\times10^{-4}$ & 2.0005&  $1.5465\times10^{-4}$& 2.0369 & $13.2716$\\
            \hline
            $2^{-7}$ & $5.8844\times10^{-5}$ & 2.1659&  $3.3960\times10^{-5}$ & 2.1871 & $32.5578$\\
            \hline
            $2^{-8}$ & $1.2380\times10^{-5}$ & 2.2489&  $7.0439\times10^{-6}$ & 2.2694 & $76.2471$\\
           \hline
          \end{tabular}
          \end{array}
            \end{equation*}
        \text{\,}\\
          \text{\,}\\
        \textbf{Tables} $1$-$2$ indicate that the proposed predictor-corrector approach $(\ref{s1})$-$(\ref{s4})$ is temporal second-order accurate and spatial fourth-order convergent. Additionally, Figures $\ref{figure1}$-$\ref{figure2}$ suggest that the new computational technique is unconditionally stable. These computational results confirm the theoretical studies provided in Theorem $\ref{t1}$.\\
        \text{\,}\\
          \text{\,}\\
      $\bullet$ \textbf{Example $3$}. Suppose that $\Omega=(0,2.5)\times(0,2.5)$, is the fluid region and let $T=1$, be the final time. We analyze the developed predictor-corrector scheme $(\ref{s1})$-$(\ref{s4})$, with discontinuous initial conditions and concerned with unconditional stability. We consider the problem described in \cite{24wcqh} as
         \begin{equation*}
      \left\{
        \begin{array}{ll}
          u_{t}-\gamma_{1}\Delta u=u(1-u)(u-\theta_{3})-v, & \hbox{on $\Omega\times(0,\text{\,}1]$} \\
          v_{t}=\epsilon_{0}(\theta_{1}u-\theta_{2}v-\theta_{0}), & \hbox{on $\Omega\times(0,\text{\,}1]$}
        \end{array}
      \right.
     \end{equation*}
     where $\theta_{3}=\epsilon_{0}=10^{-2}$, $\gamma_{1}=10^{-4}$, $\theta_{1}=0.5$, $\theta_{2}=1$, and $\theta_{0}=0$. The initial conditions are given by
      \begin{equation*}
     u(x,y,0)=\left\{
                \begin{array}{ll}
                  1, & \hbox{if $0<x\leq1.25$, $0<y\leq 1.25$} \\
                  0, & \hbox{if $1.25\leq x <2.5$, $0<y\leq1.25$} \\
                  0, & \hbox{if $0<x\leq1.25$, $1.25\leq y<2.5$} \\
                  0, & \hbox{if $1.25\leq x<2.5$, $1.25\leq y<2.5$}
                \end{array},
              \right.\text{\,\,\,}v(x,y,0)=\left\{
                \begin{array}{ll}
                  0, & \hbox{if $0<x\leq1.25$, $0<y\leq 1.25$} \\
                  0, & \hbox{if $1.25\leq x <2.5$, $0<y\leq1.25$} \\
                  0.1, & \hbox{if $0<x\leq1.25$, $1.25\leq y<2.5$} \\
                  0.1, & \hbox{if $1.25\leq x<2.5$, $1.25\leq y<2.5$}
                \end{array}
              \right.,
     \end{equation*}
      and boundary conditions
       \begin{equation*}
        u(x,y,t)=0,\text{\,\,\,}v(x,y,t)=0,\text{\,\,\,on\,\,\,}\Gamma\times(0,\text{\,}1].
       \end{equation*}
       Figure $\ref{figure3}$ shows that the developed strategy is unconditionally stable for problems with discontinuity initial conditions. Finally, the analysis discussed in this section indicates that the new numerical approach $(\ref{s1})$-$(\ref{s4})$ for solving the initial-boundary value problem $(\ref{1})$-$(\ref{3})$, calculates efficiently the predicted and corrected solutions and maintains strong stability and high-accuracy even in the presence of singularities.

     \section{General conclusions and future investigations}\label{sec5}
     In this paper, we have proposed a high-order predictor-corrector approach with orthogonal spline collocation finite element method for solving a FitzHugh-Nagumo model $(\ref{1})$ subject to suitable initial-boundary conditions $(\ref{2})$-$(\ref{3})$. The developed numerical method approximates the analytical solution in time using variable time steps in predictor phase and a constant time step in corrector stage while the orthogonal spline collocation finite element methods are used in the space discretization. As a result, the new algorithm has several advantages: (a) the errors increased at the predictor stage are balanced by the ones decreased at the corrector phase so that the stability of the numerical scheme is maintained, (b) the variable time steps at the predictor stage considerably reduces the numerical oscillations, (c) the use of the collocation nodes increases the number of traingles/tetrahedra which increases the dimension of the approximation space and thus, minimizes the spatial errors, and (d) the linearization of the nonlinear term minimizes the required iterations at the corrector stage. Both theoretical and numerical studies have indicated that the new algorithm $(\ref{s1})$-$(\ref{s4})$ is unconditionally stable, spatial fourth-order accurate and temporal second-order convergent in the $L^{\infty}(0,T;[H^{m}]^{2})$-norm. Additionally, the new computational technique computes efficiently both predicted and corrected solutions and preserves a strong convergence and high-order accuracy in the presence of singularities. Our future works will construct a predictor-corrector scheme combined with an orthogonal spline collocation finite element method for solving the two-dimensional parabolic interface problems.

         \subsection*{Ethical Approval}
          Not applicable.
         \subsection*{Availability of supporting data}
          Not applicable.
         \subsection*{Declaration of Interest Statement}
          The author declares that he has no conflict of interests.
         \subsection*{Authors' contributions}
          The whole work has been carried out by the author.
         \subsection*{Funding}
          Not applicable.

    \newpage

        \begin{figure}
         \begin{center}
        Stability and convergence of the predictor-corrector approach with orthogonal spline collocation FEM.
         \begin{tabular}{c c}
         \psfig{file=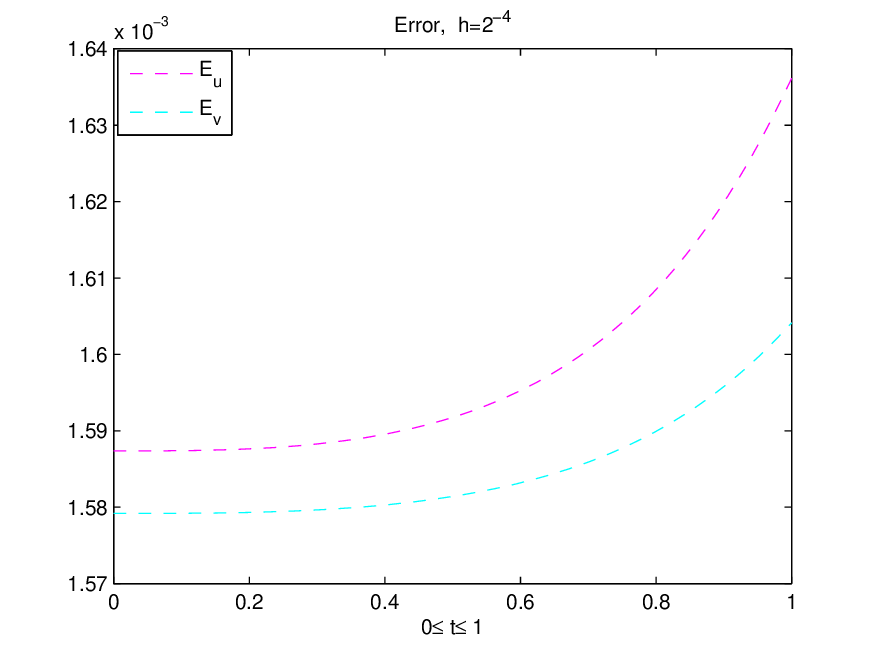,width=6cm} & \psfig{file=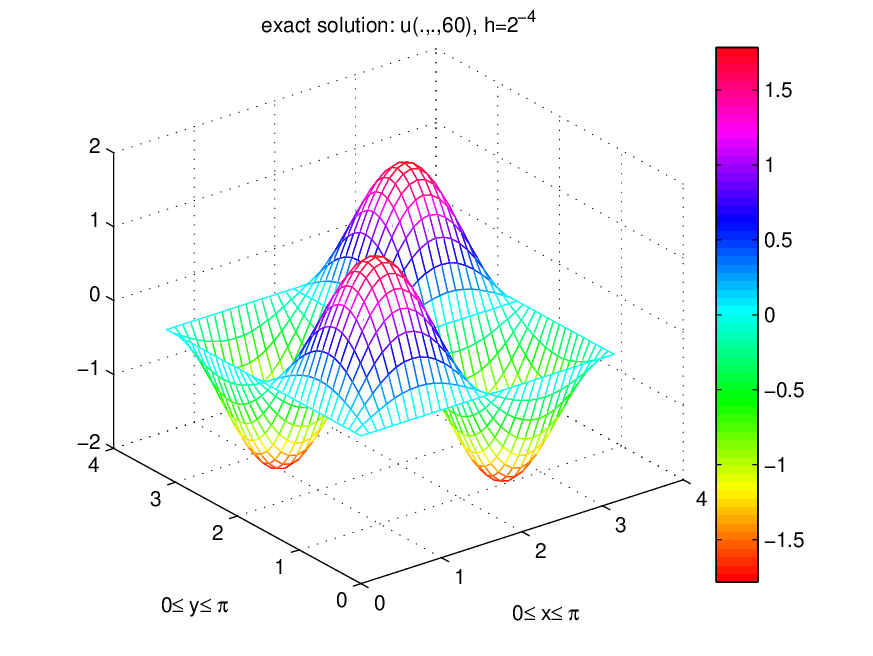,width=6cm}\\
         \psfig{file=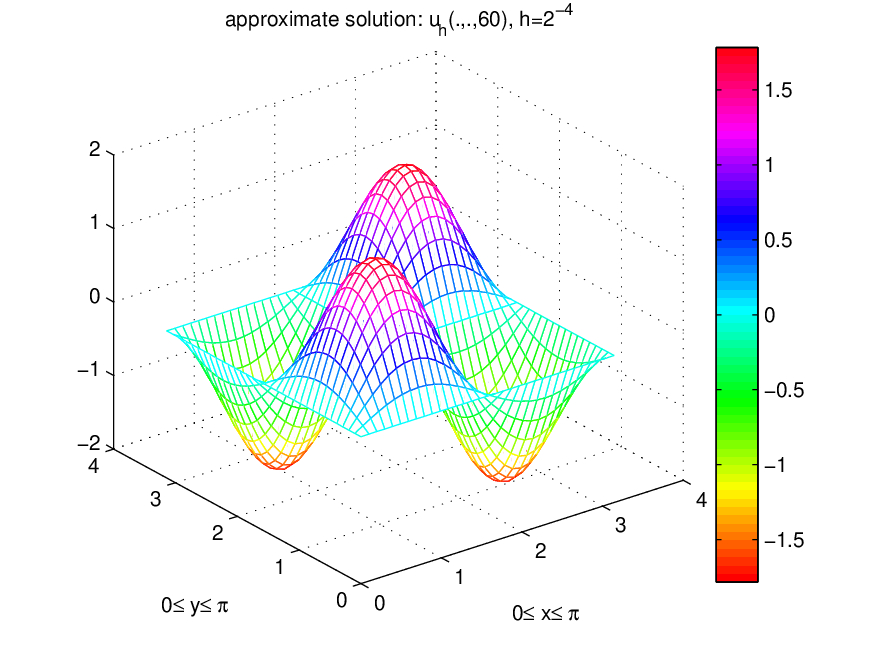,width=6cm} & \psfig{file=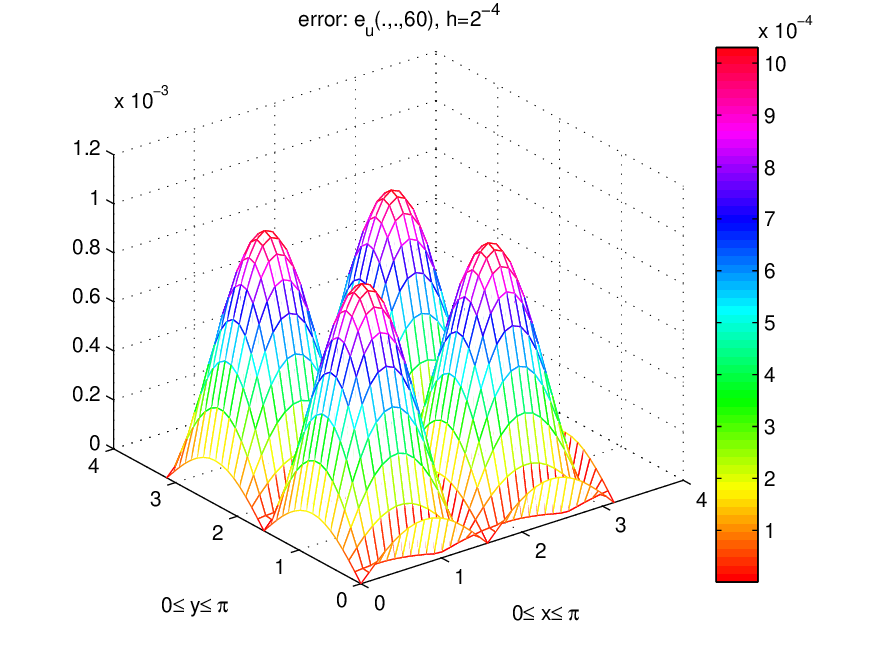,width=6cm}\\
         \psfig{file=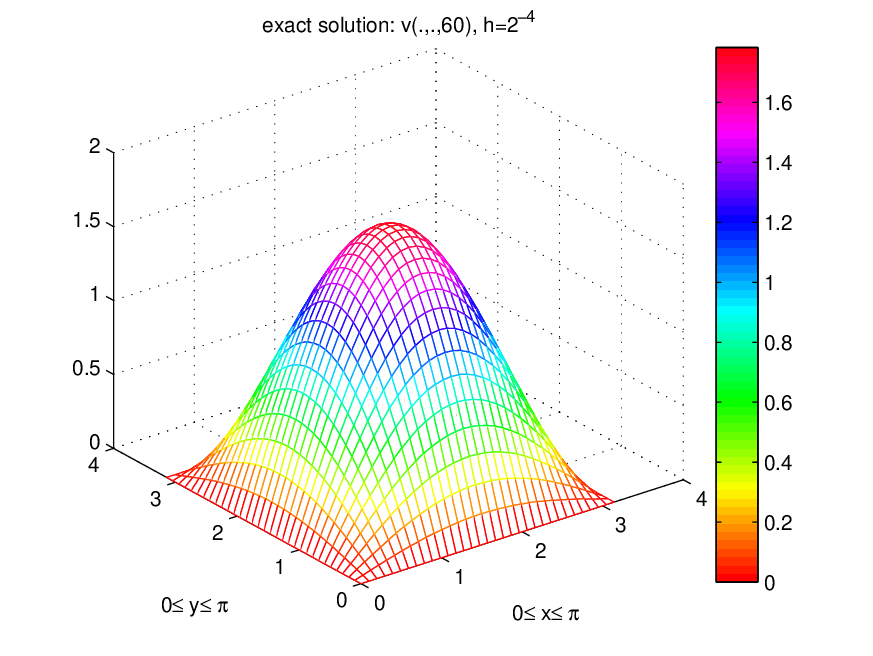,width=6cm} & \psfig{file=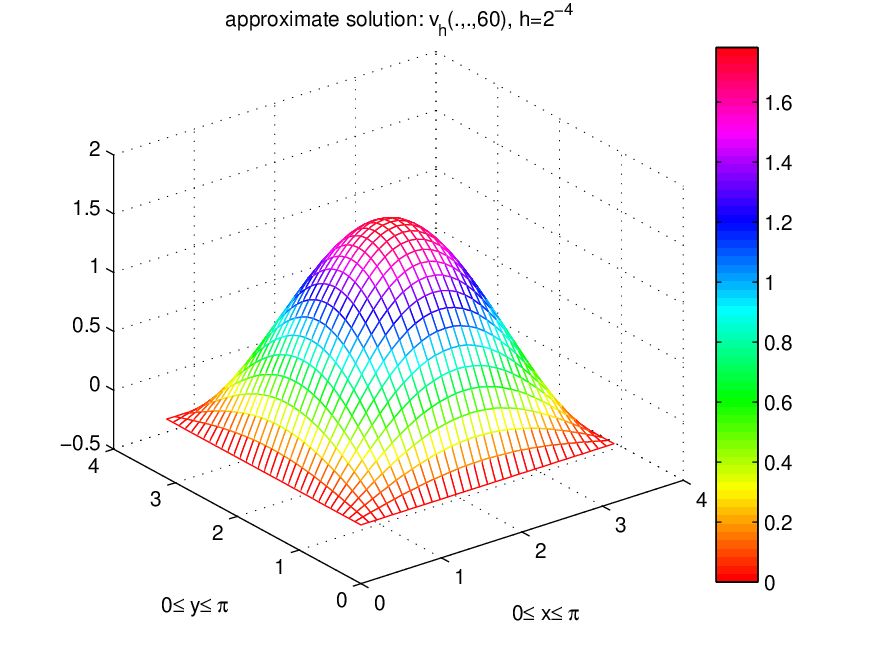,width=6cm}\\
         \psfig{file=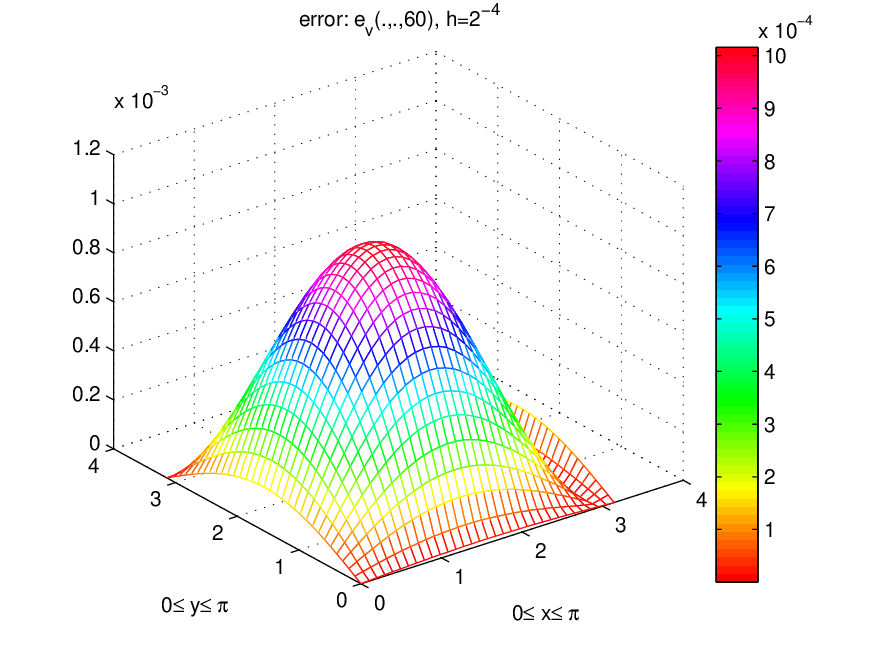,width=6cm} & \\
         \end{tabular}
        \end{center}
        \caption{exact solution, approximate solution and error corresponding to Example 1}
        \label{figure1}
        \end{figure}

           \begin{figure}
         \begin{center}
         Stability and convergence of the predictor-corrector approach with orthogonal spline collocation FEM.
         \begin{tabular}{c c}
         \psfig{file=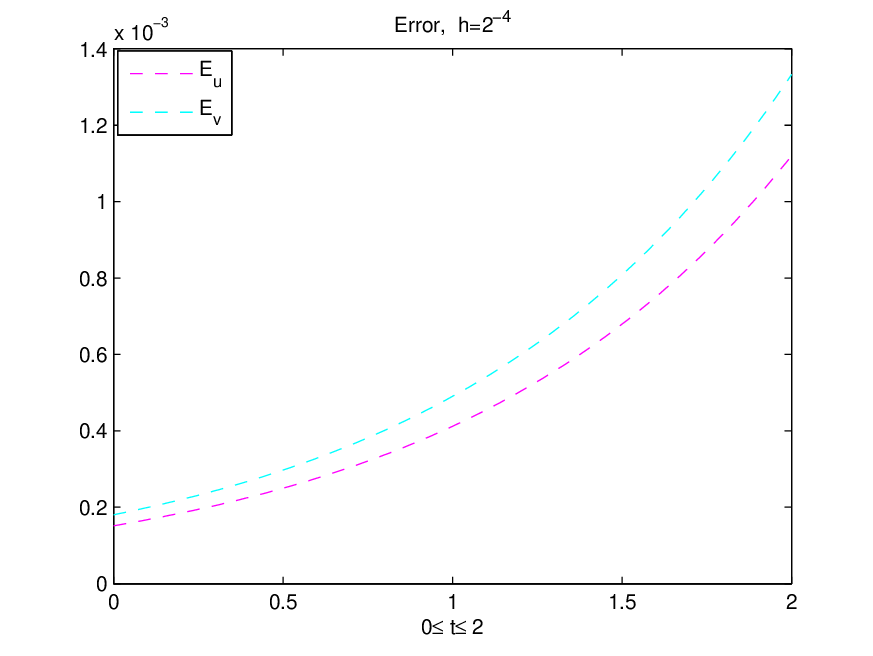,width=6cm} & \psfig{file=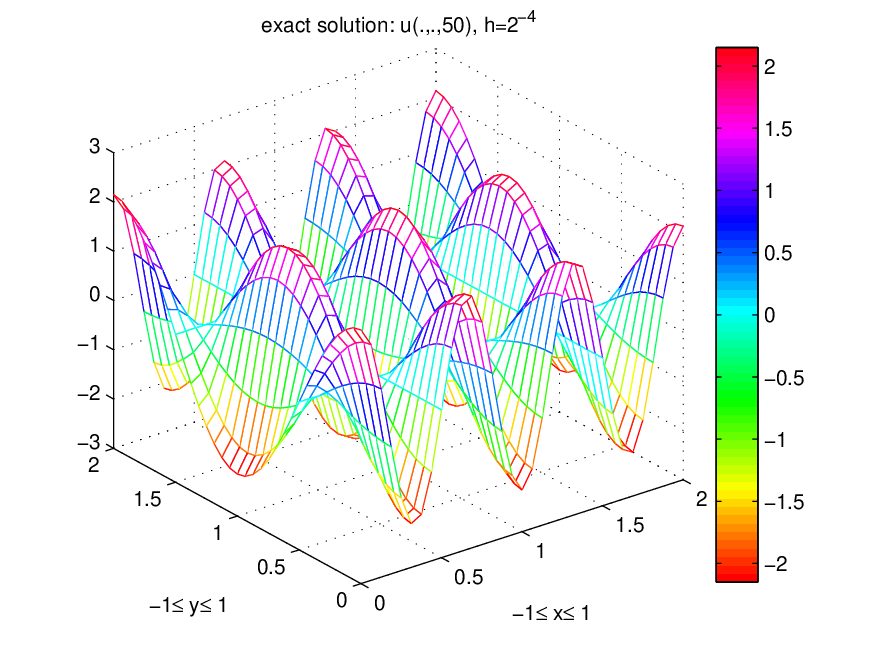,width=6cm}\\
         \psfig{file=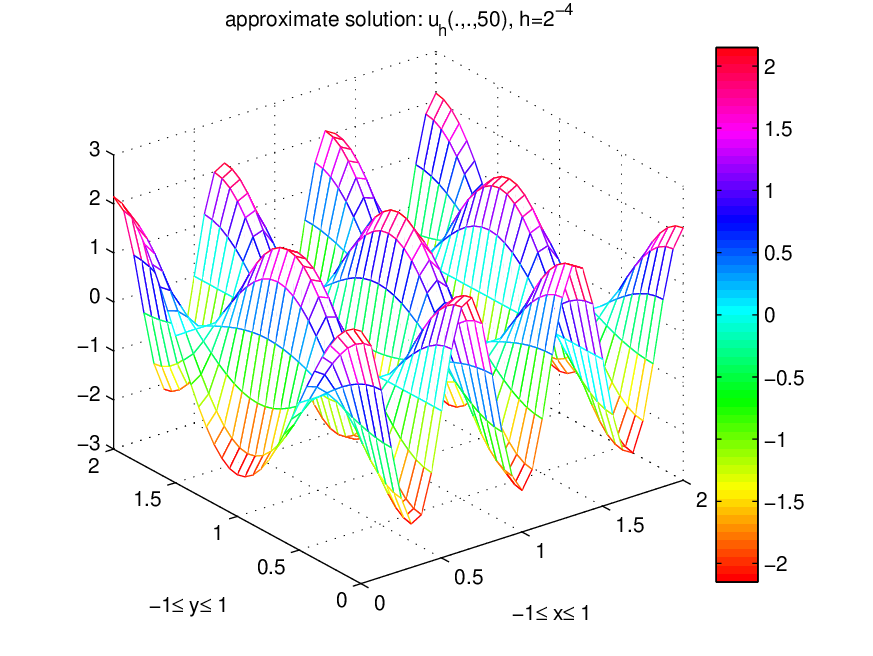,width=6cm} & \psfig{file=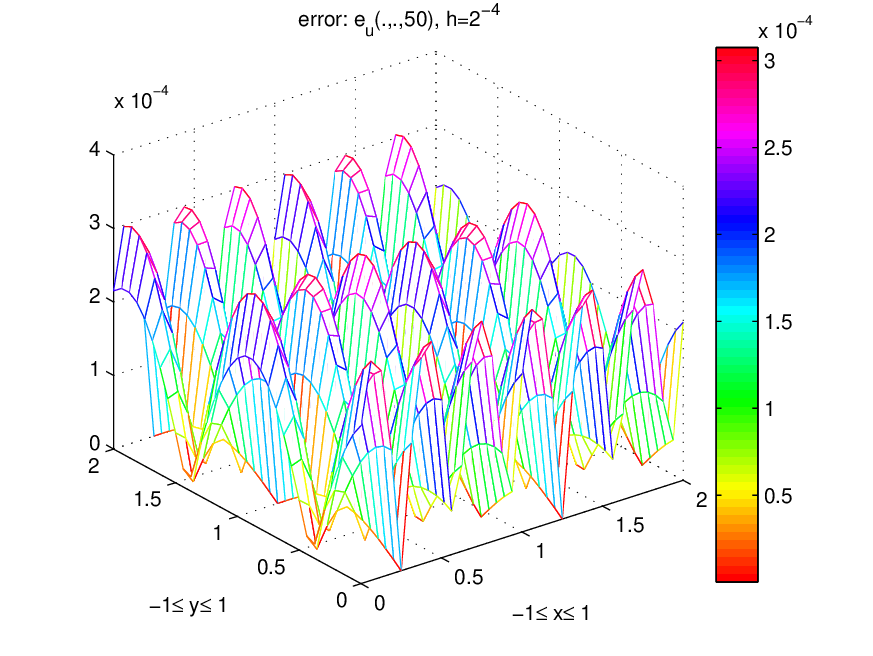,width=6cm}\\
         \psfig{file=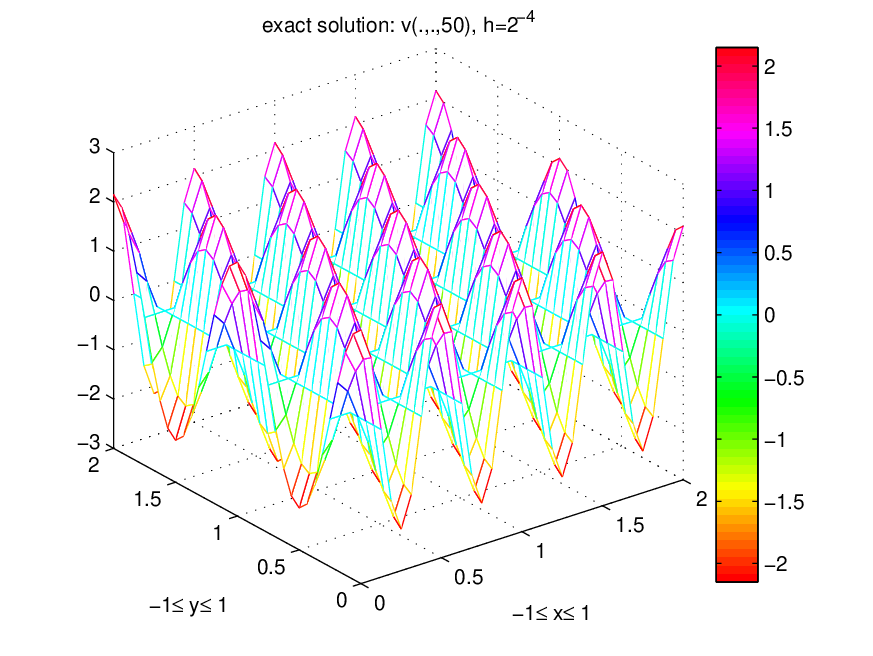,width=6cm} & \psfig{file=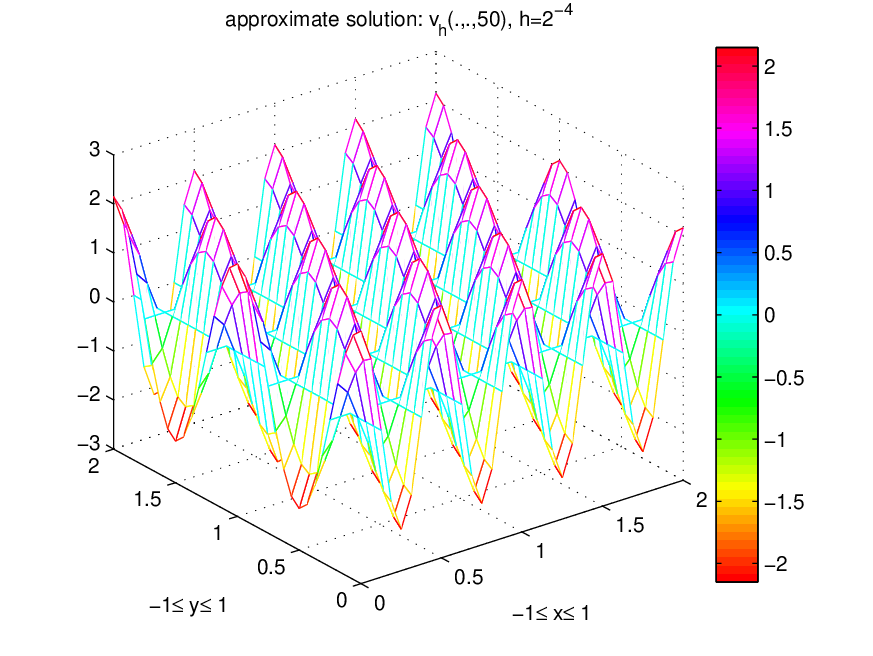,width=6cm}\\
         \psfig{file=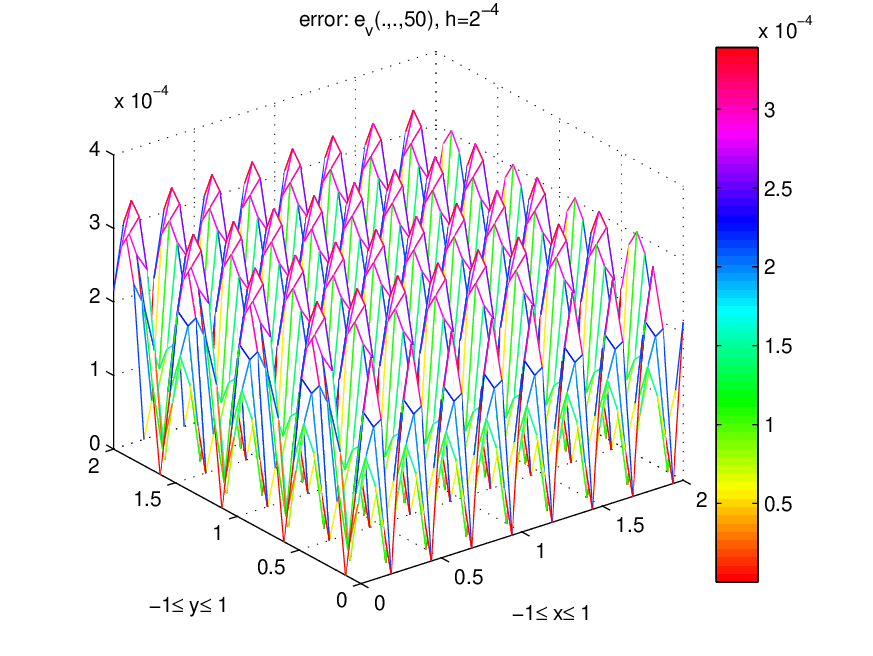,width=6cm} & \\
         \end{tabular}
        \end{center}
        \caption{exact solution, approximate solution and error associated with Example 2}
        \label{figure2}
        \end{figure}

       \begin{figure}
         \begin{center}
        Stability of the predictor-corrector approach with orthogonal spline collocation FEM.
         \begin{tabular}{c c}
         \psfig{file=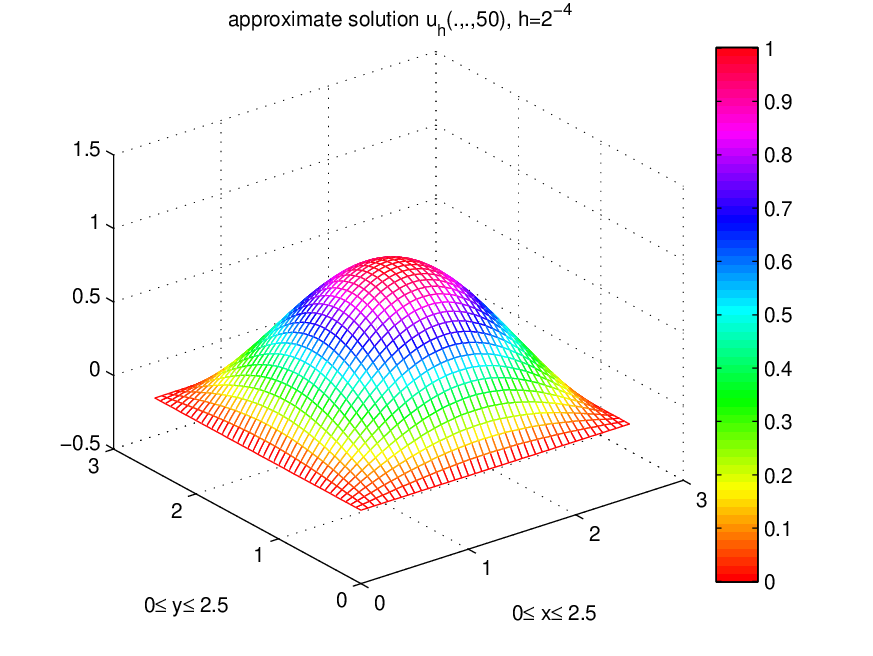,width=6cm} & \psfig{file=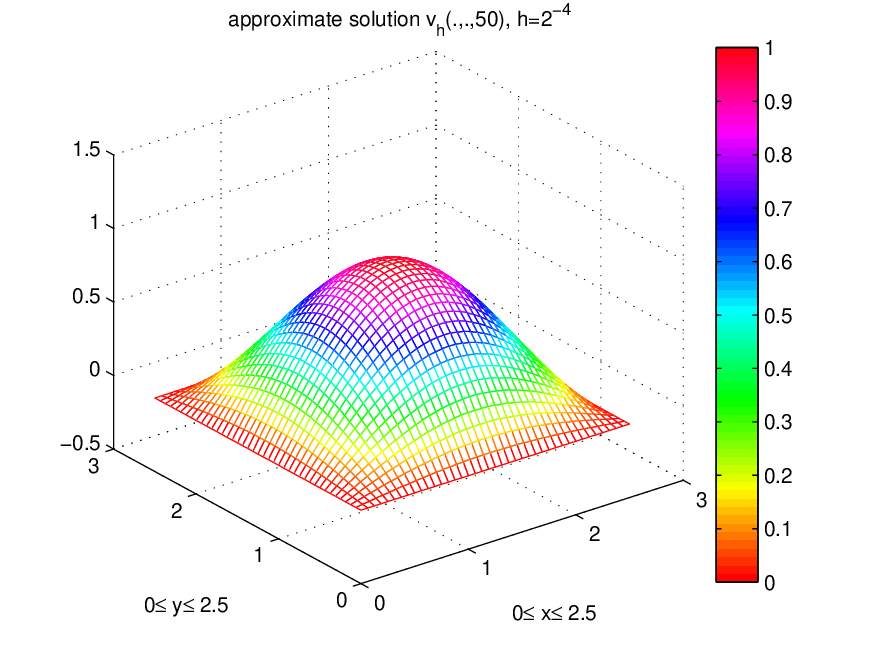,width=6cm}\\
         \psfig{file=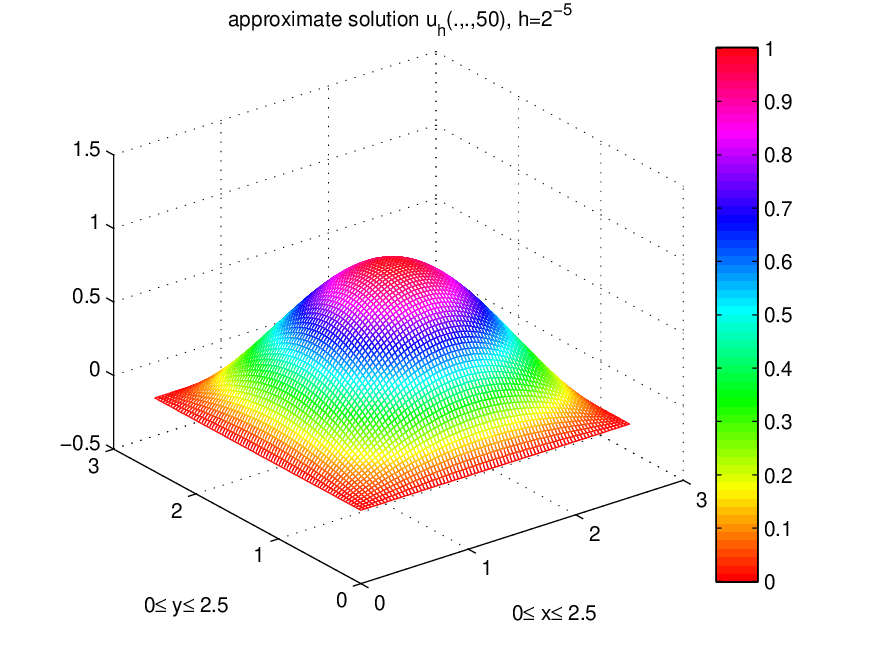,width=6cm} & \psfig{file=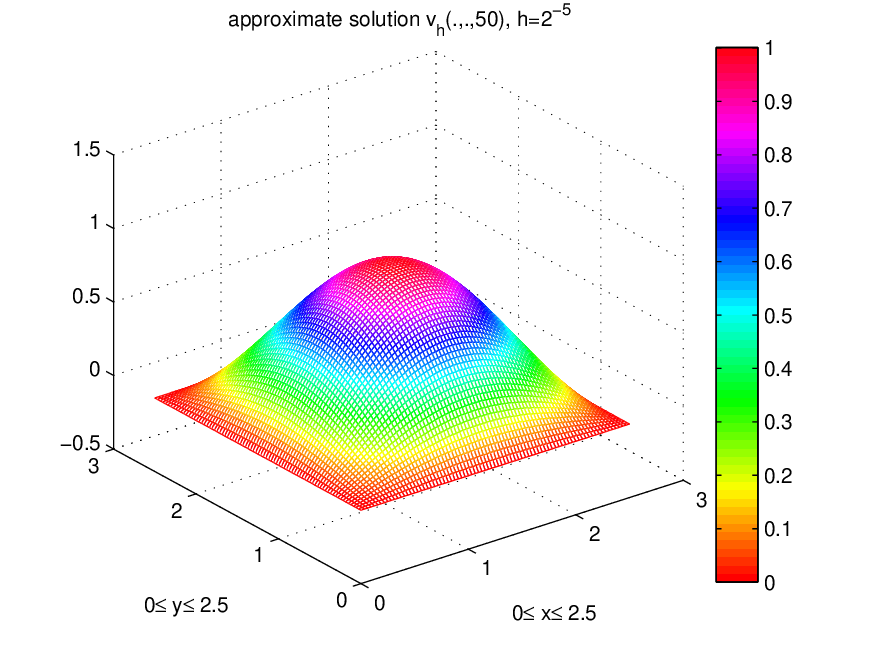,width=6cm}\\
         \psfig{file=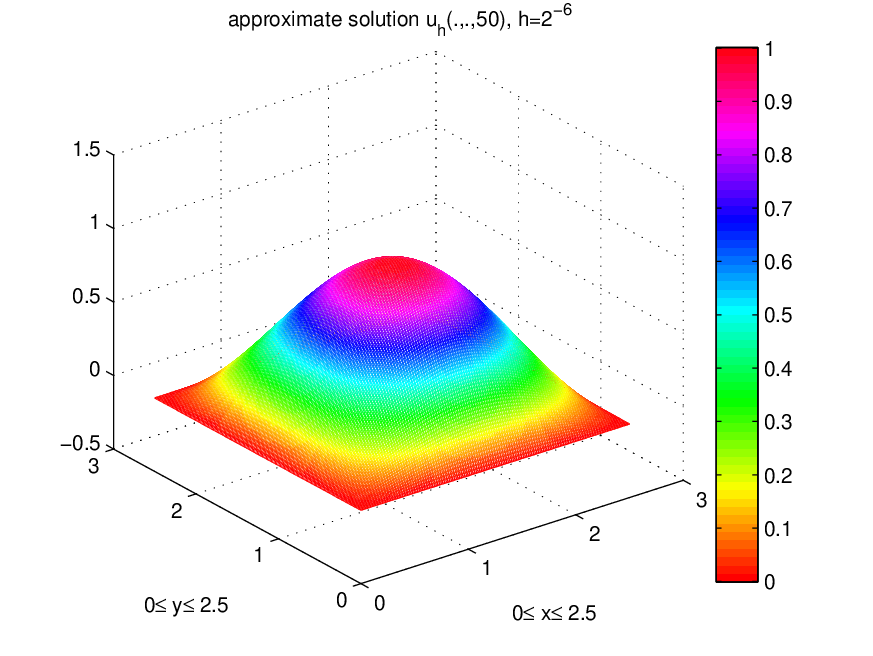,width=6cm} & \psfig{file=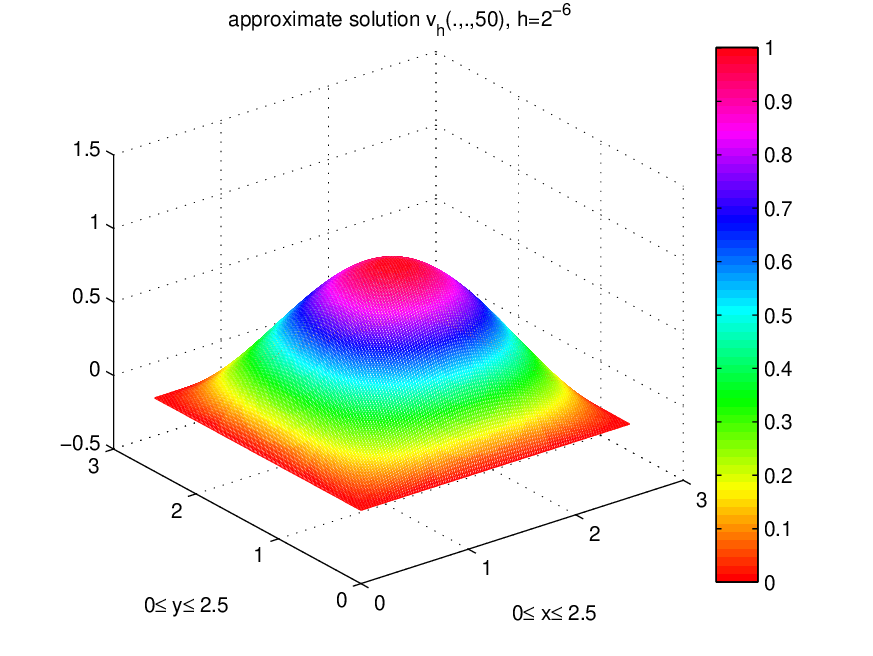,width=6cm}\\
         \end{tabular}
        \end{center}
        \caption{numerical solution associated with Example 3}
        \label{figure3}
        \end{figure}

       \end{document}